\documentclass[pdflatex,sn-mathphys-num]{sn-jnl}

\usepackage{graphicx}%
\usepackage{multirow}%
\usepackage{amsmath,amssymb,amsfonts}%
\usepackage{amsthm}%
\usepackage{mathrsfs}%
\usepackage[title]{appendix}%
\usepackage{xcolor}%
\usepackage{textcomp}%
\usepackage{manyfoot}%
\usepackage{booktabs}%
\usepackage{algorithm}%
\usepackage{algorithmicx}%
\usepackage{algpseudocode}%
\usepackage{listings}%

\usepackage[dvipsnames]{xcolor}
\usepackage{graphicx}
\usepackage{subcaption}
\usepackage{amsmath, amssymb, thmtools}
\usepackage{enumitem}
\setlist[enumerate]{leftmargin=.5in}
\setlist[itemize]{leftmargin=.5in}
\usepackage{mathtools}
\usepackage{circuitikz}
\usepackage{tikzpagenodes}
\usepackage{setspace}
\usepackage{pgfplots}
\pgfplotsset{compat=1.18}
\usetikzlibrary{decorations.pathmorphing,patterns}

\definecolor{thesis_grey}{rgb}{0.17647, 0.19216, 0.21961} 
\colorlet{thesis_red}{BrickRed} 
\colorlet{thesis_green}{ForestGreen}
\definecolor{thesis_blue}{rgb}{0.062745, 0.56471, 0.83529}
\colorlet{thesis_pink}{WildStrawberry}
\colorlet{thesis_purple}{Mulberry}
\colorlet{thesis_yellow}{Goldenrod}

\theoremstyle{thmstyleone}%
\theoremstyle{thmstyletwo}%
\newtheorem{defn}{Definition}
\newtheorem{rem}{Remark}%
\newtheorem{cor}{Corollary}
\newtheorem{lem}{Lemma}
\newtheorem{prop}{Proposition}
\newtheorem{thm}{Theorem}

\theoremstyle{thmstylethree}%

\begin{document}

\title[Semi-Discrete Linear Hyperbolic Curvature Flow of Curves with Boundary]{Semi-Discrete Linear Hyperbolic Curvature flow}


\author*[1]{\fnm{James} \sur{McCoy}}\email{James.McCoy@rmit.edu.au}

\author[2]{\fnm{Rohan} \sur{St Hill}}\email{rohan.sthill@uon.edu.au}
\equalcont{These authors contributed equally to this work.}


\affil*[1]{\orgdiv{Department of Mathematical and Geospatial Sciences}, \orgname{School of Science, Royal Melbourne Institute of Technology}, \orgaddress{\street{124 La Trobe St}, \city{Melbourne}, \postcode{3000}, \state{Victoria}, \country{Australia}}}

\affil[2]{\orgdiv{School of Computer and Information Science}, \orgname{College of Engineering, Science and Environment, University of Newcastle}, \orgaddress{\street{University Drive}, \city{Callaghan}, \postcode{2308}, \state{NSW}, \country{Australia}}}



\abstract{We consider evolution of piecewise linear curves with boundary by linear hyperbolic curvature flows.  Given appropriate boundary behaviours we find corresponding self-similar solutions.  More generally, given any specific boundary trajectories we write down general solutions using finite Fourier expansions.  Finally we consider the problem of evolving one piecewise linear curve to another using nonhomogeneous linear hyperbolic flow.}

\keywords{semi-discrete curvature flow, geometric evolution problem, hyperbolic differential equation; MSC: 34A26, 37C60}



\maketitle

\section{Introduction}\label{sec:intro}
In \cite{BC07}, Chow and Glickenstein investigated the deformation of closed polygons by a first order linear system of ordinary differential equations (ODEs) which, with an appropriate definition of `normal' vector at each vertex, is a semi-discrete analogue of the curve shortening flow. This work has subsequently been generalised in several directions. Rademacher and Rademacher considered fully discrete flows in \cite{CR16}, while in \cite{DG17} Glickenstein and Liang studied related nonlinear semi-discrete flows and obtained convergence results in parallel with the smooth case. Recently in \cite{JM25}, the first author together with Meyer considered higher spatial order, polyharmonic analogues of Chow and Glickenstein's flow, together with a semi-discrete analogue of the Yau problem of evolving one curve to another by its curvature.

In the present article we are concerned with a \emph{hyperbolic} analogue of these flows, in which acceleration, rather than velocity, of each interior vertex is proportional to its discrete curvature, with an additional term proportional to the velocity that damps the motion. The continuous antecedents of such flows are well studied, with Gurtin and Podio-Guidulgi \cite{MG91} introducing a damped hyperbolic law for the motion of plane interfaces, motivated by the oscillatory behavior of crystal melt interfaces that the parabolic law does not capture, and Rotstein, Brandon and Novick-Cohen \cite{HR99} studied its isotropic reduction. More general hyperbolic geometric flows were proposed by Kong and Liu \cite{DK07} and, in the case of motion by mean curvature, analysed by He, Kong and Liu \cite{CH09} and independently by Lefloch and Smoczyk \cite{PL08}. The natural continuous counterpart of the flow considered here is the \emph{dissipative} hyperbolic mean curvature flow, which carries a velocity damping term, in the regime of small gradients it linearises to the damped wave equation, of which our system is the centered finite difference spatial discretisation. The closed polygon analogue of our hyperbolic flow was treated by the first author together with Meyer in \cite{JM25_1}.

The direct predecessor of the present work is \cite{JM24_1}, in which the present authors studied the semi-discrete \emph{parabolic} linear curvature flow of curves with boundary. That is, the flow of \cite{BC07} in the case where the two end points are prescribed rather than \emph{joined} together. There, self-similar scaling, translating and rotating solutions were constructed, an explicit representation formula for the solution with general initial data was obtained, and a semi-discrete Yau-type curvature difference flow with boundary was considered. The present article is a hyperbolic companion to \cite{JM24_1}. We provide analogues of each of the first order in time or `parabolic' results, here in the second order in time or `hyperbolic' setting. We establish several further results that have no counterpart in the parabolic case. The principal difference from the closed polygon flows of \cite{BC07, JM25, JM25_1} is that prescribing distinct end points replaces the circulant coefficient matrix of the periodic problem by a tridiagonal Toeplitz matrix with Dirichlet boundary conditions, whose eigenvalues and eigenvectors are given by trigonometric functions rather than complex exponentials, and the boundary data enters as a nonhomogeneous forcing term.

In passing from a first order to a second order system in time, the relevant spectral analysis becomes that of an associated quadratic eigenvalue problem, a structure that is classical in the study of damped vibrating systems; for more details we refer the reader to \cite{FT01} and the references contained therein. Indeed our system is precisely the equation of motion of damped mass-spring chain with prescribed end displacements.

As in the parabolic case, our setting is closely related to the discretisations of evolving curves with boundary used to find approximate solutions of linear heat and wave-type equations by finite difference or finite element methods. Our piecewise linear curves may also be regarded as graphs $G=\{V,E\}$, with $V$ and $E$ the vertex and edge sets respectively, in which case the coefficient matrix is, up to sign, the graph Laplacian of a path and our equation is the wave equation for this graph (see for example \cite{FC96, DS13}). From this point of view the parabolic flow acts as a low-pass filter on the graph, whereas the hyperbolic flow permits oscillation at all frequencies and dissipates energy through the damping term. Related ideas have recently been used to model the layers of a graph neural network as a diffusion \cite{BC21} or wave \cite{JY25} process on a graph.

The structure of the article is as follows. In Section~\ref{sec:setting} we fix notation, introduce the flow \eqref{eq:sdhcs} together with its first order reformulation, and record the energy dissipation law. The remainder of the section assembles the spectral information on which the later solution formulae rest. Section~\ref{sec:solutions} contains our results on the solutions. Section \ref{sec:selfsimilar} provides a complete classification of the self-similar solutions, scaling, rotating, translation and the combinations thereof. Section~\ref{sec:timeperiodic} treats time periodic solutions, both damped and undamped. Section \ref{sec:generalsolutions} provides the representation formula for the solution with general initial data and arbitrary forcing. Section \ref{sec:curvaturedifference} constructs a hyperbolic curvature difference flow evolving any piecewise linear curve to any other with the same number of vertices. Finally, section \ref{sec:variants} presents three variants of the system, being, coordinate-wise anisotropy, vertex-wise anisotropy and coefficient matrix variants, with the last being developed through two different $n$-degree-of-freedom damped mass-spring systems
\section{Semi-discrete hyperbolic curvature flow of curves with boundary}\label{sec:setting}
We begin by describing our setting and fixing notation.
\begin{defn}[Piecewise linear curve]\label{defn:piecewise_linear_curve}
Let $p\geqslant 2$ and $n\geqslant 4$ be fixed integers. Then $\vec{X}$ denotes a \emph{linear piecewise curve} (see Figure \ref{fig:idea_example}) defined by joining consecutive points in the set $\vec{X}=(X_1,X_2,\dots,X_n)^\top$, where for $i=1,2,\dots,n$ each point $X_j\in\mathbb{R}^p$.
\end{defn}
\begin{figure}[hptb]
    \centering
    \includegraphics[width=0.25\linewidth]{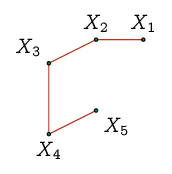}
    \caption{Example of piecewise linear curve in $\mathbb{R}^2$. The ordered collection of points are shown in green and the piecewise linear curve is shown in red.}
    \label{fig:idea_example}
\end{figure}
\begin{rem}
The set of points $\vec{X}$ can be represented as an $n\times p$ matrix. We denote by $X_i^j$ the $j$-th coordinate of vector $X_i$, where we use the regular Euclidean coordinates in $\mathbb{R}^p$.
\end{rem}

As in \cite{BC07,JM24_1} the inward pointing `normal', or discrete curvature, at an interior point is the second difference
\[N_i=(X_{i+1}-X_i)+(X_{i-1}-X_i)=X_{i-1}-2X_i+X_{i+1},\qquad i=2,3,\dots,n-1,\]
which approximates $\partial^2\gamma/\partial x^2$ at $X_i$ with unit mesh spacing. The parabolic flow \cite{ JM24_1} moves each interior vertex in the direction $N_i$. The semi-discrete hyperbolic flow instead \emph{accelerates} each interior vertex in this direction, subject to a damping proportional to its velocity. Writing $\beta\geqslant0$ for the damping parameter and prescribing the end points by $X_1=f_1(t)$ and $X_n=f_n(t)$, the interior points satisfy
\[\frac{\mathrm{d}^2X_i}{\mathrm{d}t^2}+\beta\frac{\mathrm{d}X_i}{\mathrm{d}t}=X_{i-1}-2X_i+X_{i+1},\qquad i=2,3,\dots,n-1.\]
Writing $\vec{U}=(X_2,X_3,\dots,X_{n-1})^\top$ for the vector of interior points, this system can be expressed in matrix form as follows.
\begin{defn}[Semi-discrete hyperbolic curvature flow]\label{defn:semi_discrete_hyperbolic}
    	The semi-discrete hyperbolic curvature flow is
	\begin{equation}\label{eq:sdhcs}
		\ddot{\vec{U}}+\beta\dot{\vec{U}}=A\vec{U}+\vec{f}(t),\tag{SDHF}
	\end{equation}
	where $\beta\geqslant0$, $\vec{f}(t)=f_1(t)e_1+f_n(t)e_n$, and $A$ is the $(n-2)\times(n-2)$ tridiagonal Toeplitz matrix
	\[A=
	\begin{bmatrix}
		-2 & 1 & & & 0\\
		1 & -2 & 1 & &\\
		& \ddots & \ddots & \ddots &\\
		& & 1 & -2 & 1\\
		0 & & & 1 & -2
	\end{bmatrix}
	=: \mathrm{tridiag}(1,-2,1).\]
	Here $e_1$ and $e_n$ denote the first and last standard basis vectors of $\mathbb{R}^{n-2}$.
\end{defn}

The first order reformulation of \eqref{eq:sdhcs} doubles the dimension of the system but renders it a first order linear system, to which methods of matrix exponentials, variation of parameters and spectral decomposition apply directly.
\begin{rem}\label{rem:sdhcs_firstorder}
    	Setting $\vec{V}=\mathrm{d}\vec{U}/\mathrm{d}t$, the flow \eqref{eq:sdhcs} is equivalent to the first order system
	\begin{equation}\label{eq:sdhcs_firstorder}
		\frac{\mathrm{d}}{\mathrm{d}t}
		\begin{bmatrix}\vec{U}\\\vec{V}\end{bmatrix}
		=
		\begin{bmatrix}
        0 & I\\ 
        A & -\beta I
        \end{bmatrix}
		\begin{bmatrix}
        \vec{U}\\
        \vec{V}
        \end{bmatrix}
		+\vec{F}(t),\qquad
		\vec{F}(t)=\big(0,\;f_1(t)e_1+f_n(t)e_n\big)^\top.
	\end{equation}
	We can use this form when writing explicit solutions.
\end{rem}
\subsection{Approximation property}
The standard curve shortening flow is the $L^2$-gradient flow of the length functional $\mathcal{L}[\gamma]=\int|\gamma'|\;\mathrm{d}u$. While the parabolic flow follows the path of steepest descent, the hyperbolic curve shortening flow incorporates \emph{inertia}. For a curve $\gamma:[0,1]\times [0,T)\to\mathbb{R}^p$ the evolution equation is given by
\begin{equation*} 
	\frac{\partial^2 \gamma}{\partial t^2} + \beta \frac{\partial \gamma}{\partial t} = \kappa\mathbf{N} +\nabla p
\end{equation*}
where $\beta>0$ is the damping term, $\kappa$ is the curvature vector, $\mathbf{N}$ is the unit normal, 
\[\nabla p=\left\langle \frac{\partial^2 \gamma}{\partial s\partial t},\frac{\partial \gamma}{\partial t}\mathbf{T}\right\rangle\]
is the cross derivative correction with $s$ for arc length and $\mathbf{T}$ being the unit tangent. Just as the linear heat equation approximates the parabolic curve shortening flow for graphs with small gradients, see for example \cite{JM24_1}, the linear damped wave equation approximates the hyperbolic curve shortening flow in the same regime, so
\begin{equation}\label{eq:dampedwave}
	\frac{\partial^2 \gamma}{\partial t^2} + \beta \frac{\partial \gamma}{\partial t} = \frac{\partial^2\gamma}{\partial x^2}.
\end{equation}
Associated with this flow is the total energy $\mathcal{E}$ which is defined as the sum of the \emph{kinetic} energy and \emph{potential} energy.  Assuming here the boundary points are fixed,
\begin{equation*} 
	\mathcal{E}[\gamma] = \frac{1}{2} \int_0^1 \left| \frac{\partial \gamma}{\partial t} \right|^2\;\mathrm{d}x + \frac{1}{2}\int_0^1 \left| \frac{\partial \gamma}{\partial x} \right|^2\;\mathrm{d}x.
\end{equation*}
Now taking the time derivative,
\begin{align*}
	\frac{\mathrm{d}}{\mathrm{d}t}\mathcal{E} &= \int_0^1 \left\langle \frac{\partial \gamma}{\partial t},\frac{\partial^2\gamma}{\partial t^2}\right\rangle\;\mathrm{d}x+\int_0^1\left\langle \frac{\partial\gamma}{\partial x},\frac{\partial^2\gamma}{\partial t\partial x}\right\rangle\;\mathrm{d}x\\
	&=\int_0^1\left\langle \frac{\partial\gamma}{\partial t},\frac{\partial^2\gamma}{\partial t^2}\right\rangle\;\mathrm{d}x - \int_0^1\left\langle\frac{\partial^2\gamma}{\partial x^2},\frac{\partial\gamma}{\partial t}\right\rangle\;\mathrm{d}x\\
	&=\int_0^1\left\langle \frac{\partial\gamma}{\partial t},\frac{\partial^2\gamma}{\partial x^2}-\beta\frac{\partial \gamma}{\partial t}\right\rangle\;\mathrm{d}x-\int_0^1\left\langle \frac{\partial^2\gamma}{\partial x^2},\frac{\partial \gamma}{\partial t}\right\rangle\;\mathrm{d}x\\
	&=-\beta\int_0^1\left|\frac{\partial\gamma}{\partial t}\right|^2\;\mathrm{d}x
	=-2\beta \mathcal{K}\leqslant 0,
\end{align*}
where 
\begin{equation*}
	\mathcal{K} = \frac{1}{2}\int_0^1\left|\frac{\partial \gamma}{\partial t}\right|^2\;\mathrm{d}x
\end{equation*}
is the \emph{kinetic} energy of the system. 

Now discretising the interval $[0,1]$ using a centred finite difference for the Laplacian in \eqref{eq:dampedwave} with $\Delta x\equiv 1$ we obtain the system of equations
\begin{equation*}
	\frac{\mathrm{d}^2 u_i}{\mathrm{d}t^2} + \beta\frac{\mathrm{d}u_i}{\mathrm{d}t} = u_{i-1} - 2u_i + u_{i+1}.
\end{equation*}
This is exactly \eqref{eq:sdhcs} for the interior points.
\begin{prop}[Approximation Property]\label{prop:discreteenergy}
	Let $U(t)$ be a solution to the semi-discrete hyperbolic flow \eqref{eq:sdhcs} with time independent boundary conditions (that is $f_1(t)\equiv c_1$ and $f_n(t)\equiv c_n$). Define the total discrete energy $E(t)$ as the sum of the discrete kinetic energy $K$ and the discrete Dirichlet energy $W$,
	\begin{equation}\label{eq:dirichletenergy}
		E = K + W = \frac{1}{2} \sum_{i=2}^{n-1}\left| \frac{dX_i}{dt} \right|^2 + \frac{1}{2} \sum_{i=2}^{n-1} |X_i - X_{i-1}|^2
	\end{equation}
	Thus the total energy is monotonically decreasing and satisfies the dissipation law
	\begin{equation} 
		\frac{\mathrm{d}}{\mathrm{d}t} E(t) = -2\beta K \leqslant 0.
	\end{equation}
\end{prop}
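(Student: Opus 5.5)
The plan is to differentiate $E$ directly along solutions of \eqref{eq:sdhcs}, substitute the equation of motion into the kinetic term, and cancel the curvature contribution against the derivative of the potential term by a discrete integration by parts. This mirrors the continuous computation given above for $\mathcal{E}[\gamma]$.

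\textbf{Kinetic term.} Write $V_i=\dot X_i$ for $i=2,\dots,n-1$. Then
\[
\dot K=\sum_{i=2}^{n-1}\langle V_i,\ddot X_i\rangle=\sum_{i=2}^{n-1}\big\langle V_i,\,X_{i-1}-2X_i+X_{i+1}\big\rangle-\beta\sum_{i=2}^{n-1}|V_i|^2 .
\]
The last sum is $-2\beta K$. What remains is to show that $\dot W$ exactly cancels the first sum.

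\textbf{Potential term and summation by parts.} This is the only real point to check. The potential energy must be the sum over \emph{all} edges of the curve, i.e.\ $W=\tfrac12\sum_{i=2}^{n}|X_i-X_{i-1}|^2$. This includes the edge $X_n-X_{n-1}$ joining the last interior vertex to the fixed endpoint. I read the upper index in \eqref{eq:dirichletenergy} this way: the edge sum should run to $n$, the continuous analogue being $\int_0^1|\partial_x\gamma|^2$ over the whole interval. Without that last edge the cancellation fails by a boundary term $\langle V_{n-1},X_n-X_{n-1}\rangle$.

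Because $f_1\equiv c_1$ and $f_n\equiv c_n$, we have $\dot X_1=\dot X_n=0$. Hence
\[
\dot W=\sum_{i=2}^{n}\langle X_i-X_{i-1},\,V_i-V_{i-1}\rangle ,\qquad V_1=V_n=0.
\]
Now reindex: split the sum, shift the index in the part containing $V_{i-1}$, and use the vanishing boundary velocities to discard the end terms. This gives
\[
\dot W=\sum_{i=2}^{n-1}\langle V_i,(X_i-X_{i-1})-(X_{i+1}-X_i)\rangle=-\sum_{i=2}^{n-1}\langle V_i,X_{i-1}-2X_i+X_{i+1}\rangle .
\]
This is the negative of the curvature sum in $\dot K$.

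\textbf{Conclusion.} Adding the two computations gives $\dot E=-\beta\sum_{i=2}^{n-1}|V_i|^2=-2\beta K\leqslant0$, since $\beta\geqslant0$. So $E$ is monotonically nonincreasing.

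Equivalently, in the matrix notation of Definition~\ref{defn:semi_discrete_hyperbolic}, one can write
\[
W=-\tfrac12\langle\vec U,A\vec U\rangle-\langle\vec U,\vec f\rangle+\text{const},
\]
using that $A$ is symmetric and $\vec f$ is constant. Then $\dot E=\langle\dot{\vec U},\ddot{\vec U}-A\vec U-\vec f\rangle=-\beta|\dot{\vec U}|^2$. I would include this as a one-line cross-check of the index bookkeeping.
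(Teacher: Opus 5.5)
Your proof is correct and follows essentially the same route as the paper: differentiate $K$ and $W$, sum by parts using $\dot X_1=\dot X_n=0$, and substitute the equation of motion so that only $-\beta\sum_{i=2}^{n-1}|\dot X_i|^2$ survives. You are also right that $W$ must include the edge $X_n-X_{n-1}$: the paper's summation-by-parts step is valid only if the edge sum runs to $n$ (with the printed upper limit $n-1$ the term $\langle \dot X_{n-1},X_n-X_{n-1}\rangle$ survives and $\dot E$ need not be non-positive), and this reading agrees with the paper's later remark on time-dependent boundary data, whose $\langle f_n-X_{n-1},\dot f_n\rangle$ term comes only from that last edge.
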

\begin{proof}
We begin by differentiating the total energy $E$ with respect to time. First considering the rate of change of the discrete kinetic energy $K$,
	\begin{equation*}
		\frac{\mathrm{d}}{\mathrm{d}t}K = \frac{\mathrm{d}}{\mathrm{d}t}\left(\frac{1}{2}\sum_{i=2}^{n-1}\left|\frac{\mathrm{d}X_i}{\mathrm{d}t}\right|^2\right)
		= \sum_{i=2}^{n-1}\left\langle\frac{\mathrm{d}X_i}{\mathrm{d}t},\frac{\mathrm{d}^2X_i}{\mathrm{d}t^2}\right\rangle.
	\end{equation*}
	Next we differentiate the discrete Dirichlet energy $W$,
	\begin{multline*}
		\frac{\mathrm{d}}{\mathrm{d}t}W = \frac{\mathrm{d}}{\mathrm{d}t}\left(\frac{1}{2}\sum_{i=2}^{n-1}\left|X_i-X_{i-1}\right|^2\right)
		= \sum_{i=2}^{n-1}\left\langle X_i-X_{i-1},\frac{\mathrm{d}X_i}{\mathrm{d}t}-\frac{\mathrm{d}X_{i-1}}{\mathrm{d}t}\right\rangle\\
		=\sum_{i=2}^{n-1}\left\langle\left(X_i-X_{i-1}\right)-\left(X_{i+1}-X_i\right),\frac{\mathrm{d}X_i}{\mathrm{d}t}\right\rangle
		=-\sum_{i=2}^{n-1}\left\langle X_{i-1}-2X_i+X_{i+1},\frac{\mathrm{d}X_i}{\mathrm{d}t}\right\rangle.
	\end{multline*}
	Substituting these into the total time derivative,
	\begin{equation}\label{eq:discretetotal}
		\frac{\mathrm{d}}{\mathrm{d}t}E = \sum_{i=2}^{n-1}\left\langle\frac{\mathrm{d}X_i}{\mathrm{d}t},\frac{\mathrm{d}^2X_i}{\mathrm{d}t^2} - \left(X_{i-1}-2X_i+X_{i+1}\right)\right\rangle.
	\end{equation}
	From \eqref{eq:sdhcs} we have that
	\[\frac{\mathrm{d}^2X_i}{\mathrm{d}t^2} = \left(X_{i-1}-2X_i+X_{i+1}\right) - \beta \frac{\mathrm{d}X_i}{\mathrm{d}t}.\]
	Substituting this into \eqref{eq:discretetotal} we obtain
	\begin{equation*}
		\frac{\mathrm{d}}{\mathrm{d}t}E = \sum_{i=2}^{n-1}\left\langle \frac{\mathrm{d}X_i}{\mathrm{d}t},-\beta\frac{\mathrm{d}X_i}{\mathrm{d}t}\right\rangle
		=-\beta\sum_{i=2}^{n-1}\left|\frac{\mathrm{d}X_i}{\mathrm{d}t}\right|^2.
	\end{equation*}
	Since
	\[K = \frac{1}{2}\sum_{i=2}^{n-1}\left|\frac{\mathrm{d}X_i}{\mathrm{d}t}\right|^2,\]
	we conclude
	\[\frac{\mathrm{d}}{\mathrm{d}t} E=-2\beta K\leqslant 0.\]
\end{proof}

\begin{rem}[Time-Dependent Boundary Conditions]
	The strict energy dissipation property derived in Proposition \ref{prop:discreteenergy} relies on the boundary points being fixed, i.e. $\mathrm{d}X_1/\mathrm{d}t = \mathrm{d}X_n/\mathrm{d}t = 0$. If the boundary conditions are time-dependent, that is, $X_1(t) = f_1(t)$ and $X_n(t) = f_n(t)$, the system is no longer isolated, and the boundary points perform  \emph{work} on the curve.

In this case, the summation by parts in the proof produces non-vanishing boundary terms. The energy evolution equation becomes:
\begin{equation*}
	\frac{dE}{dt} = -2\beta K + \left\langle f_n(t) - X_{n-1}, \frac{\mathrm{d}}{\mathrm{d}t}f_n(t) \right\rangle - \left\langle X_2 - f_1(t), \frac{\mathrm{d}}{\mathrm{d}t}f_1(t) \right\rangle.
\end{equation*}
The additional terms represent the \textit{external power} supplied to the system. Consequently, the total energy is not necessarily monotonically decreasing, it may increase if the boundary movement injects energy into the system faster than the damping term $-2\beta K$ can dissipate it.
\end{rem}

\begin{figure}[htb]
	\centering
	\begin{subfigure}[t]{0.459\textwidth}
		\includegraphics[width=\textwidth]{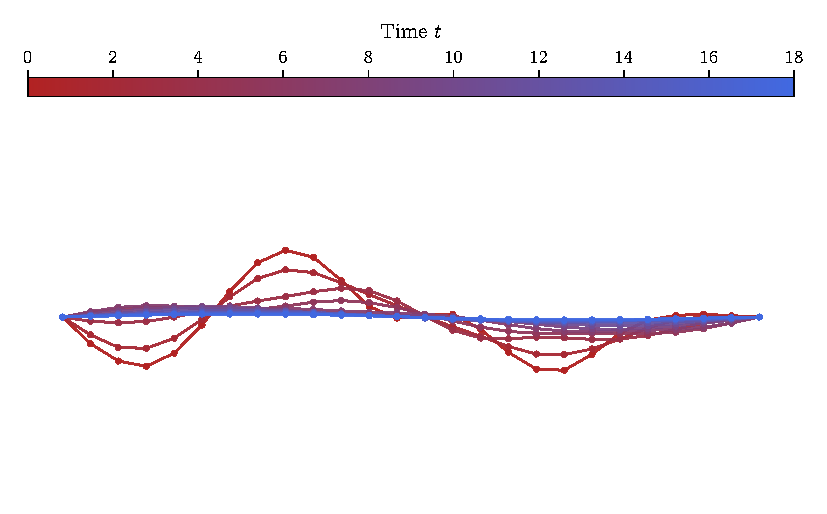}
		\caption{Evolution of the semi-discrete hyperbolic linear curvature flow with $\beta=0.6$. Individual time steps are shown superimposed.}
	\end{subfigure}
	\hfill
	\begin{subfigure}[t]{0.49\textwidth}
		\includegraphics[width=\textwidth]{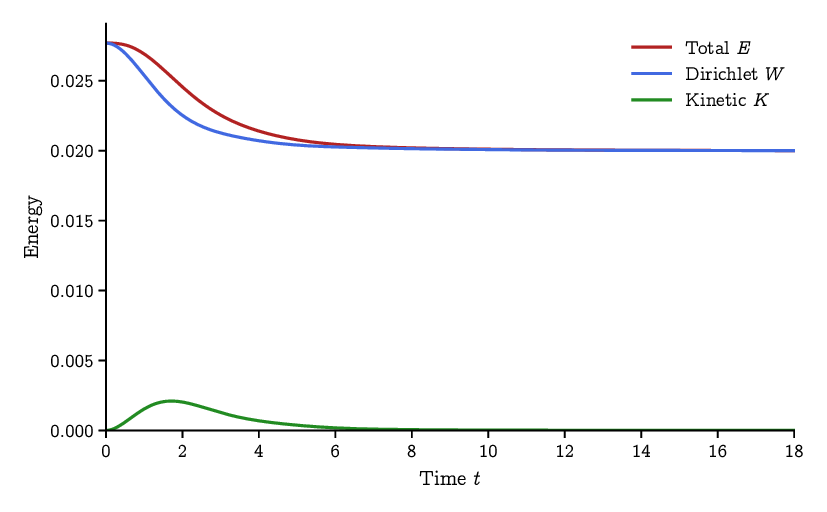}
		\caption{Evolution of the energy of the curve shown in sub-figure (a).}
	\end{subfigure}
	\begin{subfigure}[t]{0.459\textwidth}
		\includegraphics[width=\textwidth]{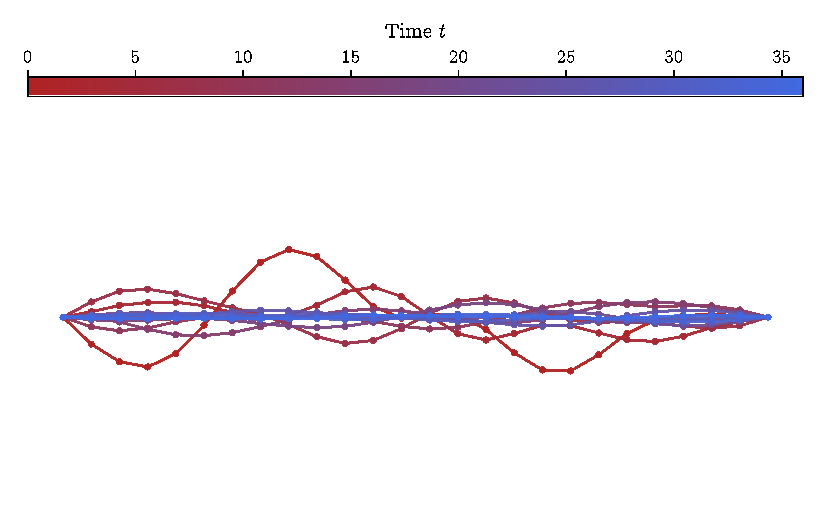}
		\caption{Evolution of the semi-discrete hyperbolic linear curvature flow with $\beta=0.15$. Individual time steps are shown superimposed.}
	\end{subfigure}
	\hfill
	\begin{subfigure}[t]{0.49\textwidth}
		\includegraphics[width=\textwidth]{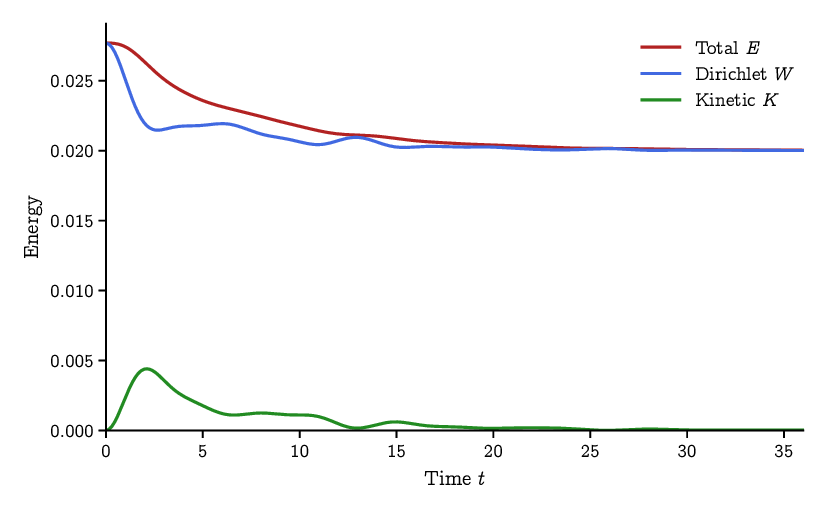}
		\caption{Evolution of the energy of the curve shown in sub-figure (c).}
	\end{subfigure}

	\caption[Energy decay of curve undergoing evolution]{An example evolution of a curves under the semi-discrete hyperbolic linear curvature flow showing the decay of energy.}
	\label{fig:energy}
\end{figure}

Figure \ref{fig:energy} illustrates this dissipation law on two representative initial curves with constant boundary. With $\beta=0.6$ (panels (a)-(b)) the system is near critically damped and the curve straightens to the line interpolant with no overshoot. Whereas, with $\beta=0.15$ (panels (c)-(d)) the lighter damping permits several overshots before settling. Notice however in both cases the total energy is monotonically decreasing.

\subsection{Matrix properties}\label{sec:mat}
The explicit solution formulae for \eqref{eq:sdhcs} developed in later sections rests on a complete spectral analysis of two matrices; the spatial operator $A=\mathrm{tridiag}(1,-2,1)$ governing the curvature coupling and the block matrix
\[
M=
\begin{bmatrix}
	0 & I\\
	A & \mathrm{diag}(-\beta)\\
\end{bmatrix}
\]
arising from the reduction of order for \eqref{eq:sdhcs}. The eigenvalues and eigenvectors of $A$ are classical and are recorded in Lemma \ref{lem:etri} for convenience. The block matrix $M$ is more nuanced, its eigenvalues solve a quadratic eigenvalue problem of the form
\[\lambda^2+\beta\lambda-\mu_k=0\]
in terms of the eigenvalues $\mu_k$ of $A$, so the $(n-2)$ eigenpairs of $A$ give rise to $2(n-2)$ eigenpairs of $M$. We compute these in two stages of increasing generality.

Proposition \ref{prop:esystem} treats the case relevant to \eqref{eq:sdhcs}, where the damping matrix is the scalar multiple $\mathrm{diag}(-\beta)$, and gives the explicit eigenvalues and eigenvectors of $M$ purely in terms of the eigenpairs of $A$. We then record a more general result in Proposition \ref{prop:blockdiaggeneral}, which handles arbitrary simultaneously diagonalisable stiffness and damping matrices, via a different proof technique. While this is not strictly necessary for the analysis of \eqref{eq:sdhcs}, Proposition \ref{prop:blockdiaggeneral} subsumes Proposition \ref{prop:esystem} as a special case and is useful in its own right. We present both because they exemplify complementary methods to approach the underlying quadratic eigenvalue problem and the more general case may be useful in certain dynamical system problems.

We begin by collecting the relevant tridiagonal Toeplitz results in a single place.
\begin{lem}[Tridiagonal Toeplitz matrix eigenvalues and eigenvectors]\label{lem:etri}
	The eigenvalues of a tridiagonal Toeplitz matrix $\mathrm{tridiag}( c, a, b)$ with $b\neq0$ are
	\[\lambda_k = a + 2b\left(\frac{c}{b}\right)^\frac{1}{2}\cos\left(\frac{k\pi}{m+1}\right)\text{ for }k=1,2,\dots,m\]
	and the corresponding eigenvectors are
	\begin{multline*}
    v_k = \left[ \left(\frac{c}{b}\right)^\frac{1}{2}\sin\left(\frac{k\pi}{m+1}\right),\frac{c}{b}\sin\left(\frac{2k\pi}{m+1}\right),\left(\frac{c}{b}\right)^\frac{3}{2}\sin\left(\frac{3k\pi}{m+1}\right), \right. \\
    \left. \dots,
    \left(\frac{c}{b}\right)^\frac{m}{2}\sin\left(\frac{mk\pi}{m+1}\right)\right]^\top
\end{multline*}
\end{lem}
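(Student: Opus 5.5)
The plan is to verify the claimed eigenpairs directly with the ansatz $v_j = r^j\sin(j\theta)$, using the recurrence encoded by the rows of the matrix. Throughout, $\mathrm{tridiag}(c,a,b)$ is the $m\times m$ matrix with $c$ on the subdiagonal, $a$ on the diagonal and $b$ on the superdiagonal. Set $r=(c/b)^{1/2}$ and $\theta_k = k\pi/(m+1)$. If $c/b<0$, then $r$ is a fixed complex square root and all identities below are purely algebraic, so they hold over $\mathbb{C}$.

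\textbf{Step 1 (interior rows).} I will extend the vector by $v_0=0$ and $v_{m+1}=r^{m+1}\sin\big((m+1)\theta_k\big)=r^{m+1}\sin(k\pi)=0$. With this convention every row of $\mathrm{tridiag}(c,a,b)\,v_k$, including the first and last, reads
\[
c\,v_{j-1}+a\,v_j+b\,v_{j+1},\qquad j=1,\dots,m,
\]
so no separate boundary rows are needed.

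\textbf{Step 2 (the key computation).} Substituting the ansatz gives
\[
c\,r^{j-1}\sin((j-1)\theta)+b\,r^{j+1}\sin((j+1)\theta).
\]
The key observation is that $c/r = b\,r$, since both equal $\sqrt{bc}$ with consistent branches (using $r^2=c/b$). So this sum equals
\[
b\,r\cdot r^{j}\big[\sin((j-1)\theta)+\sin((j+1)\theta)\big]=2b\,r\cos\theta\cdot r^j\sin(j\theta).
\]
Hence the $j$-th entry of $\mathrm{tridiag}(c,a,b)\,v_k$ is $\big(a+2b(c/b)^{1/2}\cos\theta_k\big)v_j$ for all $j$, which is exactly $\lambda_k v_j$.

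\textbf{Step 3 (nontriviality and completeness).} Assume additionally $c\neq0$; otherwise $r=0$ and the stated vectors vanish. Then the first entry of $v_k$ is $r\sin\theta_k\neq0$, since $0<\theta_k<\pi$, so $v_k\neq0$. The values $\cos\theta_k$ for $k=1,\dots,m$ are pairwise distinct, because cosine is strictly decreasing on $(0,\pi)$, and $br\neq0$. So the $\lambda_k$ are $m$ distinct eigenvalues, which therefore exhaust the spectrum of the $m\times m$ matrix, and each eigenspace is spanned by $v_k$.

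The main obstacle is not the computation itself but stating the hypotheses cleanly. The formula needs $bc\neq0$ and a consistent branch choice for $(c/b)^{1/2}$, so that $c/r=br$ holds when $c/b$ is negative or complex. For the application $A=\mathrm{tridiag}(1,-2,1)$ with $m=n-2$, we have $r=1$, so the eigenvalues are $\mu_k=-2+2\cos\big(k\pi/(n-1)\big)=-4\sin^2\big(k\pi/(2(n-1))\big)$ with eigenvectors $\big(\sin(jk\pi/(n-1))\big)_{j=1}^{n-2}$.
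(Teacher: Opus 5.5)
Your proof is correct. The paper gives no argument of its own for this lemma; it only cites Smith's finite-difference text. The classical proof there \emph{derives} the formula rather than verifying it. The eigen-equation is read as the three-term recurrence $c\,v_{j-1}+(a-\lambda)v_j+b\,v_{j+1}=0$ with $v_0=v_{m+1}=0$. Its solutions are combinations of $\rho_1^j,\rho_2^j$, where $\rho_1,\rho_2$ are the roots of $b\rho^2+(a-\lambda)\rho+c=0$. The boundary conditions force $(\rho_1/\rho_2)^{m+1}=1$, and together with $\rho_1\rho_2=c/b$ this produces both $\lambda_k=a+b(\rho_1+\rho_2)$ and the profile $(c/b)^{j/2}\sin(jk\pi/(m+1))$.

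You instead take the stated vectors as an ansatz. You check every row at once using the padding $v_0=v_{m+1}=0$ and the identity $c\,r^{j-1}=b\,r^{j+1}$. That identity is immediate from $r^2=c/b$, so no appeal to $\sqrt{bc}$ is needed at that step. You then get completeness by exhibiting $m$ distinct eigenvalues of an $m\times m$ matrix.

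The derivation explains where the formula comes from and shows directly that every eigenvector has this shape. The cost is solving the recurrence and treating the repeated-root case separately. Your verification is shorter and self-contained, and it makes the hidden hypotheses visible. You are right that the stated hypothesis $b\neq0$ alone does not suffice for the eigenvector claim: when $c=0$ the listed vectors all vanish. So one needs $bc\neq0$, and the same branch of $(c/b)^{1/2}$ must be used in both $\lambda_k$ and $v_k$.

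To cover the statement's hypothesis fully, you could add one remark. When $c=0$ the eigenvalue claim still holds trivially, since the matrix is then upper triangular with constant diagonal $a$, and the formula returns $\lambda_k=a$ for every $k$.
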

\begin{proof}
	See for example \cite[Chapter 3]{SM79}.
\end{proof}
Many of the explicit computations in later sections, require not the spectrum of $A$, but its inverse, thus a closed form expression for the inverse is therefore valuable. The following lemma records the standard formula in the broader setting of an arbitrary symmetric tridiagonal Toeplitz matrix.  Its proof may be found in \cite{GH96}, for example.
\begin{lem}[Symmetric tridiagonal matrix inverse]\label{lem:inverse}
	The inverse of  $\mathrm{tridiag}(1,a,1)$ is
	\[\left(\mathrm{tridiag}(1,a,1)^{-1}\right)_{ij} = 
	\begin{cases}
		-\frac{\cosh(k+1-|j-i|)\lambda-\cosh(k+1-i-j)\lambda}{2\sinh\lambda\sinh(k+1)\lambda}	& a \leqslant -2\\
		(-1)^{i+j}\frac{\cosh(k+1-|j-i|)\lambda-\cosh(k+1-i-j)\lambda}{2\sinh\lambda\sinh(k+1)\lambda}& a \geqslant 2\\
		-\frac{\cos(k+1-|j-i|)\lambda-\cos(k+1-i-j)\lambda}{2\sin\lambda\sin(k+1)\lambda}& -2 < a < 2\\
	\end{cases}\]
	where
	\[
	\begin{cases}
		a=-2\cosh\lambda & a\leqslant -2\\
		a=2\cosh\lambda & a\geqslant 2\\
		a=2\cos\lambda & -2<a<2.
	\end{cases}\]
\end{lem}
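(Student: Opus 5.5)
The plan is to prove the formula column by column, treating it as a discrete Green's function problem for a second-order linear recurrence. Write $T=\mathrm{tridiag}(1,a,1)$ of size $k\times k$ and fix a column index $j$. The $j$-th column $x=(x_1,\dots,x_k)^\top$ of $T^{-1}$ is the unique solution of
\[
x_{i-1}+a\,x_i+x_{i+1}=\delta_{ij},\qquad i=1,\dots,k,\qquad x_0=x_{k+1}=0.
\]
So it suffices to check three things for the candidate $B_{ij}$: it satisfies the homogeneous recurrence for $i\neq j$, it satisfies both boundary conditions, and it has the correct unit jump at $i=j$. Uniqueness then requires $T$ to be invertible, i.e.\ $\sinh(k+1)\lambda\neq0$ or $\sin(k+1)\lambda\neq0$. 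This is implicit in the statement, and it holds automatically when $|a|\ge 2$ with $\lambda>0$, since by Lemma \ref{lem:etri} the eigenvalues are $a+2\cos(m\pi/(k+1))\neq0$.

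\textbf{Homogeneous solutions.} The characteristic equation is $r^2+ar+1=0$, and the parametrisation of $a$ linearises it in each case:
\begin{itemize}
\item for $a=-2\cosh\lambda$ the roots are $e^{\pm\lambda}$, giving solutions $\cosh(i\lambda)$ and $\sinh(i\lambda)$;
\item for $a=2\cosh\lambda$ the roots are $-e^{\pm\lambda}$, giving $(-1)^i\cosh(i\lambda)$ and $(-1)^i\sinh(i\lambda)$;
\item for $a=2\cos\lambda$ the roots are $-e^{\mp i\lambda}$, giving $(-1)^i\cos(i\lambda)$ and $(-1)^i\sin(i\lambda)$.
\end{itemize}

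\textbf{Gluing across the diagonal.} Take the ansatz $x_i=C\,s(i)\,s(k+1-j)$ for $i\le j$ and $x_i=C\,s(j)\,s(k+1-i)$ for $i\ge j$, where $s(m)=\sinh(m\lambda)$ (with the appropriate sign factor in the other cases). This ansatz vanishes at $i=0$ and $i=k+1$, satisfies the homogeneous recurrence on each side, and is continuous at $i=j$. Substituting into row $j$ and using the addition formula $\sinh((j-1)\lambda)+\sinh((j+1)\lambda)=2\cosh\lambda\,\sinh(j\lambda)$ reduces the jump condition to a Casoratian-type identity
\[
s(j)s(k+2-j)-s(j+1)s(k+1-j)\;\propto\;\sinh\lambda\,\sinh(k+1)\lambda,
\]
which fixes $C=\mp 1/(\sinh\lambda\sinh(k+1)\lambda)$. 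Finally, the product-to-sum identity
\[
2\sinh(p\lambda)\sinh(q\lambda)=\cosh((p+q)\lambda)-\cosh((p-q)\lambda),
\]
with $p=\min(i,j)$ and $q=k+1-\max(i,j)$, gives $p+q=k+1-|i-j|$ and $|p-q|=|k+1-i-j|$. This yields exactly the displayed numerator over $2\sinh\lambda\sinh(k+1)\lambda$. The trigonometric case is identical with $\sin$, $\cos$ in place of $\sinh$, $\cosh$.

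\textbf{Main obstacle: signs.} The hardest step is keeping the sign factors consistent across the three cases. Checking small sizes is the safeguard. For $k=1$ the third formula correctly gives $1/a$. For $k=2$, however, it gives off-diagonal entry $+1/(a^2-1)$, whereas the true entry is $-1/(a^2-1)$. So in the regime $-2<a<2$ with $a=2\cos\lambda$, a factor $(-1)^{i+j}$ must accompany the formula, exactly as in the case $a\ge2$. Equivalently, without that factor one should parametrise $a=-2\cos\lambda$. In the proof I would therefore either carry the factor $(-1)^{i+j}$ explicitly, or adopt the parametrisation $a=-2\cos\lambda$ in that case, so that the verification goes through verbatim with the argument used for $a\le-2$. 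The cases $a=\pm2$ are recovered as limits $\lambda\to0$, where the formula tends to the familiar $\min(i,j)\bigl(k+1-\max(i,j)\bigr)/(k+1)$ form.
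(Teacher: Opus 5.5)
The paper gives no argument of its own for this lemma; it only cites Hu and O'Connell \cite{GH96}. Your discrete Green's function verification is therefore a genuinely different, self-contained route: homogeneous solutions from $r^2+ar+1=0$, a glued ansatz vanishing at $i=0$ and $i=k+1$, the unit jump in row $j$, then product-to-sum. It proves the two hyperbolic cases.

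It also exposes something the citation hides. You are right that the third case is false as printed. For $k=2$, $a=0$ (so $\lambda=\pi/2$) the formula gives $-1$ for the $(1,2)$ entry, whereas $T^{-1}=T$ has entry $1$. In general the printed expression is $(-1)^{i+j}$ times the true one, so it is wrong exactly when $i+j$ is odd. The correct statement for $a=2\cos\lambda$ is
\[
\left(\mathrm{tridiag}(1,a,1)^{-1}\right)_{ij}=-(-1)^{i+j}\,\frac{\cos(k+1-|j-i|)\lambda-\cos(k+1-i-j)\lambda}{2\sin\lambda\,\sin(k+1)\lambda},
\]
which is your first remedy. This corrected version is what your argument establishes. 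The paper only invokes the formula at $a=-2$, which is unaffected.

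Two points in your write-up are wrong, however, and the first would reproduce exactly this kind of sign slip.

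\textbf{The trigonometric case.} It is \emph{not} the hyperbolic case with $\sin,\cos$ in place of $\sinh,\cosh$. For $a=-2\cos\lambda$ the jump computation does carry over and gives $C=-1/(\sin\lambda\sin(k+1)\lambda)$. But the product-to-sum step reverses order: $2\sin(p\lambda)\sin(q\lambda)=\cos((p-q)\lambda)-\cos((p+q)\lambda)$, whereas $2\sinh(p\lambda)\sinh(q\lambda)=\cosh((p+q)\lambda)-\cosh((p-q)\lambda)$. Hence your alternative remedy is false. With $a=-2\cos\lambda$ the inverse is $+\frac{\cos(k+1-|j-i|)\lambda-\cos(k+1-i-j)\lambda}{2\sin\lambda\sin(k+1)\lambda}$. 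Keeping the leading minus gives $-1/a$ already for $k=1$.

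\textbf{The Casoratian is mis-indexed.} Row $j$ reduces to
\[
C\,[s(j)s(k-j)-s(j+1)s(k+1-j)]=-C\sinh\lambda\,\sinh(k+1)\lambda .
\]
Your expression $s(j)s(k+2-j)-s(j+1)s(k+1-j)$ instead equals $\sinh\lambda\,\sinh((2j-k-1)\lambda)$, which depends on $j$.

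\textbf{Suggested fix for the signs.} Use the conjugation $D\,\mathrm{tridiag}(1,a,1)\,D=-\mathrm{tridiag}(1,-a,1)$ with $D=\mathrm{diag}((-1)^i)$. This gives
\[
(\mathrm{tridiag}(1,a,1)^{-1})_{ij}=-(-1)^{i+j}(\mathrm{tridiag}(1,-a,1)^{-1})_{ij}.
\]
You then need only the two Green's function computations for $a=-2\cosh\lambda$ and $a=-2\cos\lambda$. The cases $a\geqslant 2$ and $a=2\cos\lambda$ follow immediately.
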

Though Lemma \ref{lem:inverse} is sufficient for our purposes, it is worth contextualising this within the spectral graph theory setting, where the inverse can be recognised as the Green's function for the path graph Laplacian with Dirichlet boundary conditions. We note that for $a=-2$ the first case of Lemma \ref{lem:inverse} is indeterminate, since $\cosh\lambda=1$ gives $\lambda=0$. Expanding \[\cosh(x\lambda)=1+\frac{1}{2}x^2\lambda^2+\mathcal{O}(\lambda^4) \text{ and }\sinh(x\lambda)=x\lambda+\mathcal{O}(\lambda^3)\]
and taking the limit $\lambda\to0$ yields the closed form
\begin{equation}\label{eq:greens_function}
	A^{-1} = -\frac{\min(i,j)((m+1)-\max(i,j))}{m+1}\text{ for }i,j=1,2,\dots,m
\end{equation}
where $m=n-2$ is the number of interior vertices.
\begin{rem}
	The formula \eqref{eq:greens_function} is the path graph specialisation of the discrete Green's function studied by Chung and Yau \cite{FC00}. In their framework, the discrete Green's function $\mathcal{G}$ of an induced subgraph $S$ of a graph $\Gamma$ is the inverse of the Dirichlet Laplacian $\mathcal{L}_S$ restricted to $S$, and plays the same role for diffusion type problems on graphs as the classical Green's function plays for the corresponding continuous partial differential equation. For the path $P_m$ with vertex set $\{1,2,\dots,m\}$ and boundary $\{0,m+1\}$, Chung and Yau \cite[Theorem 3]{FC00} derive
	\[\mathcal{G}(i,j) = \frac{2}{m+1}\min(i,j)((m+1)-\max(i,j))\]
	for the normalised Laplacian 
	\[\mathcal{L}_S=\frac{1}{2}L_S\]
	where $L_S=\mathrm{tridiag}(-1,2,-1)$ is the Dirichlet Laplacian on the interior vertices and the factor $1/2$ arises because each interior vertex of the path has degree 2. To convert this to our setting, observe that $A=-L_S$ so 
	\[L^{-1}_S=\frac{1}{2}\mathcal{G} \implies A^{-1}=-L_S^{-1}=-\frac{1}{2}\mathcal{G}.\]
\end{rem}

We turn now to the spectral analysis of the block matrix $M$.
\begin{prop}[Block matrix eigendecomposition]\label{prop:esystem}
	The eigenvalues of the $2m\times2m$ matrix
	\[\begin{bmatrix}
		0 & I\\
		\mathrm{tridiag}(c,a,b) & \mathrm{diag}(-\beta)\\
	\end{bmatrix}
	\]
	are
	\[\lambda_j = \frac{-\beta\pm\sqrt{\beta^2+4\mu_k}}{2}\text{ for }k=1,2,\dots,m\text{ and }j=1,2,\dots,2m.
\]
	where
	\[\mu_k = a + 2b\left(\frac{c}{b}\right)^\frac{1}{2}\cos\left(\frac{k\pi}{m+1}\right)\]
	and the corresponding eigenvectors are
	\[v_j = (u_k ,\lambda_ju_k)^\top \text{ for }k=1,2,\dots,m\text{ and }j=1,2,\dots,2m..\]
	where
	\begin{multline*}
    u_k = \left[ \left(\frac{c}{b}\right)^\frac{1}{2}\sin\left(\frac{k\pi}{m+1}\right),\frac{c}{b}\sin\left(\frac{2k\pi}{m+1}\right),\left(\frac{c}{b}\right)^\frac{3}{2}\sin\left(\frac{3k\pi}{m+1}\right), \right. \\
    \left. \dots,
    \left(\frac{c}{b}\right)^\frac{m}{2}\sin\left(\frac{mk\pi}{m+1}\right)\right]^\top.
\end{multline*}
\end{prop}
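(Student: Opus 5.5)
The plan is to verify the eigenpairs directly and then show that they exhaust the spectrum. Write $T=\mathrm{tridiag}(c,a,b)$ and
\[M=\begin{bmatrix}0 & I\\ T & -\beta I\end{bmatrix}.\]
By Lemma~\ref{lem:etri}, $Tu_k=\mu_k u_k$ for $k=1,\dots,m$. Look for eigenvectors of $M$ in the block form $v=(x,y)^\top$. The eigenvalue equation $Mv=\lambda v$ splits into two block equations:
\[y=\lambda x,\qquad Tx-\beta y=\lambda y.\]
Substituting the first into the second gives the quadratic eigenvalue problem
\[\big(T-(\lambda^2+\beta\lambda)I\big)x=0 .\]
Now take $x=u_k$. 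This vector solves the problem exactly when $\lambda^2+\beta\lambda-\mu_k=0$, that is,
\[\lambda=\frac{-\beta\pm\sqrt{\beta^2+4\mu_k}}{2}.\]
Setting $y=\lambda u_k$ then verifies directly that $v=(u_k,\lambda u_k)^\top$ is an eigenvector. Indeed, the first block row of $Mv$ is $\lambda u_k$, and the second is
\[\mu_k u_k-\beta\lambda u_k=\lambda^2 u_k,\]
as required. Indexing the two roots for each $k$ by $j=1,\dots,2m$ gives the stated list.

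Next I must show that these are all the eigenvalues, with correct multiplicity. The cleanest route is a determinant identity. The blocks $-\lambda I$ and $T$ commute, so the formula for $2\times2$ block determinants with commuting blocks applies:
\[\det(M-\lambda I)=\det\big(-\lambda(-\beta-\lambda)I-T\big)=\det\big((\lambda^2+\beta\lambda)I-T\big)=\prod_{k=1}^m(\lambda^2+\beta\lambda-\mu_k).\]
The last equality uses the fact that the $\mu_k$ are the $m$ eigenvalues of $T$, counted with multiplicity. This yields exactly the $2m$ roots claimed. Conversely, the reduction above shows that any eigenvector of $M$ has the form $(x,\lambda x)$ with $x$ an eigenvector of $T$ for the eigenvalue $\lambda^2+\beta\lambda$. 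So the eigenvectors listed are, up to scaling, all of them.

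The main obstacle is the degenerate situation, together with some bookkeeping. When $\beta^2+4\mu_k=0$ the two roots coincide. Since the $\mu_k$ are distinct when $c/b>0$ (the cosines are distinct), that $\lambda$ then has a one-dimensional eigenspace, so $M$ is not diagonalisable there. I would flag this as a remark rather than claim $2m$ independent eigenvectors, noting that only critical damping of some mode produces it. Apart from that, linear independence of the $2m$ vectors $(u_k,\lambda u_k)$ follows because the $u_k$ are independent and, for fixed $k$, distinct $\lambda$ give independent second blocks. For $c/b<0$ the square root $(c/b)^{1/2}$ is complex, and I would just take a fixed branch consistently, exactly as Lemma~\ref{lem:etri} does.
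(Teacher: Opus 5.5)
Your proposal is correct and follows essentially the same route as the paper. Both arguments reduce the eigenvalues, via the commuting-block determinant, to $\det\big((\lambda^2+\beta\lambda)I-T\big)$, and both obtain the eigenvectors from the block equations $y=\lambda x$, $Tx=(\lambda^2+\beta\lambda)x$. Your added remark on the critically damped case $\beta^2+4\mu_k=0$ is a genuine refinement the paper's proof omits: there $M$ is not diagonalisable, which matters for the diagonalisability the paper later assumes in Theorem~\ref{thm:generalinitialdata}.
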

\begin{proof}
Let
\[B = \begin{bmatrix}
		0 & I\\
		\mathrm{tridiag}(c,a,b) & \mathrm{diag}(-\beta)\\
	\end{bmatrix},
	\]
	then
	\begin{align*}
		\det(B-I\lambda) &= \det\left(\mathrm{diag}(-\lambda)\mathrm{diag}(-\beta-\lambda)-\mathrm{tridiag}(c,a,b)\right)\\
		&= \det\left(\mathrm{diag}(\lambda^2 + \beta\lambda)-\mathrm{tridiag}(c,a,b)\right).
	\end{align*}
	Now let $\mu=\lambda^2+\beta\lambda$ such that we have
	\[0=\mathrm{det}\left(\mathrm{tridiag}(c,a,b)-\mu I\right).\]
	We note from Lemma \ref{lem:etri} that the solutions are
	\[\mu_k = a + 2b\left(\frac{c}{b}\right)^\frac{1}{2}\cos\left(\frac{k\pi}{m+1}\right)\text{ for }k=1,2,\dots,m.\]
	So
	\begin{equation*}
		\lambda^2 + \beta\lambda = \mu_k
		\implies \lambda_j = \frac{-\beta\pm\sqrt{\beta^2+4\mu_k}}{2}\text{ for }k=1,2,\dots,m\text{ and }j=1,2,\dots,2m.
	\end{equation*}
	Now for the eigenvectors
	\begin{align*}
		Bv=\lambda v
		\iff 
		\begin{bmatrix}
			0 & I \\
			\mathrm{tridiag}(c,a,b) & \mathrm{diag}(-\beta) \\
		\end{bmatrix}
		\begin{bmatrix}
			u\\
			w\\
		\end{bmatrix}
		=\lambda
		\begin{bmatrix}
			u\\
			w\\
		\end{bmatrix}.
	\end{align*}
	We can now write this as
	\begin{align*}
		&\mathrm{tridiag}(c,a,b)u + \lambda \mathrm{diag}(-\beta) u=\lambda^2u
		\iff 0 = \lambda^2u+\beta\lambda u - \mathrm{tridiag}(c,a,b)u\\
		&\implies \mathrm{tridiag}(c,a,b)u = (\lambda^2+\beta\lambda)u
		=\mu u.
	\end{align*}
	We note from Lemma \ref{lem:etri} that the solutions are
	\begin{multline*}
    u_k = \left[ \left(\frac{c}{b}\right)^\frac{1}{2}\sin\left(\frac{k\pi}{m+1}\right),\frac{c}{b}\sin\left(\frac{2k\pi}{m+1}\right),\left(\frac{c}{b}\right)^\frac{3}{2}\sin\left(\frac{3k\pi}{m+1}\right), \right. \\
    \left. \dots,
    \left(\frac{c}{b}\right)^\frac{m}{2}\sin\left(\frac{mk\pi}{m+1}\right)\right]^\top.
\end{multline*}
Thus the eigenvectors of $B$ are
	\[v_j = (u_k ,\lambda_ju_k)^\top \text{ for }k=1,2,\dots,m\text{ and }j=1,2,\dots,2m.\]

\end{proof}
Proposition \ref{prop:esystem} furnishes all that is required to solve \eqref{eq:sdhcs} in its presented form where the damping enters as a scalar $\beta$. However, we now record a more general result which subsumes Proposition \ref{prop:esystem} that proceeds by a different method. The proof in this generality cannot use the block reduction of Proposition \ref{prop:esystem}. Instead, working directly with the eigenvalue equation $Mv=\lambda v$ and substituting a shared eigenvector reduces the problem to a scalar quadratic in each mode separately.

\begin{prop}[Simultaneously diagonalisable eigendecomposition]\label{prop:blockdiaggeneral}
	Let $T=\mathrm{tridiag}(c,a,b)$ and $D$ be $m\times m$ matrices that are simultaneously diagonalisable, that is, they share a common eigenbasis $\{u_k\}_{k=1}^m$ with $Tu_k=\mu_k u_k$ and $Du_k=\omega_ku_k$. Then the $2m\times 2m$ matrix
	\[B = 
	\begin{bmatrix}
		0 & I \\
		T & -D
	\end{bmatrix}\]
	has eigenvalues
	\[\lambda_k^\pm = \frac{\omega_k\pm\sqrt{\omega_k^2+4\mu_k}}{2},\quad k=1,2,\dots,m,\]
	and the corresponding eigenvectors are
	\[v_k^\pm = (u_k,\lambda^\pm_ku_k)^\top,\quad k=1,2,\dots,m.\]
\end{prop}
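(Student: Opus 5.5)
The plan is to work directly with the eigenvalue equation $Bv=\lambda v$, writing $v=(u,w)^\top$ with $u,w\in\mathbb{C}^m$, rather than using the determinant reduction of Proposition~\ref{prop:esystem}. The reason is that $D$ need not be scalar, so $\lambda I$ and $D$ cannot simply be absorbed into a single diagonal block.

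\textbf{Step 1: reduce to a quadratic eigenvalue problem.} The first block row of $Bv=\lambda v$ gives $w=\lambda u$. Substituting this into the second block row, $Tu-Dw=\lambda w$, gives
\[
Tu=\lambda^2u+\lambda Du .
\]
This is the quadratic eigenvalue problem $(\lambda^2 I+\lambda D-T)u=0$.

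\textbf{Step 2: one scalar quadratic per mode.} Take $u=u_k$ from the shared eigenbasis. Then the equation becomes $(\lambda^2+\omega_k\lambda-\mu_k)u_k=0$. Since $u_k\neq0$, this holds exactly when $\lambda^2+\omega_k\lambda-\mu_k=0$. Its roots give the two eigenvalues $\lambda_k^\pm$, and $v_k^\pm=(u_k,\lambda_k^\pm u_k)^\top$ is an eigenvector by direct verification. Carrying the signs through carefully, the roots are $\tfrac{1}{2}\big(-\omega_k\pm\sqrt{\omega_k^2+4\mu_k}\big)$. This matches Proposition~\ref{prop:esystem} when $D=\beta I$, so the $+\omega_k$ in the statement should presumably read $-\omega_k$, or else $B$ should carry $+D$. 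I would flag this in the write-up and prove the version consistent with the displayed $B$.

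\textbf{Step 3: completeness of the spectrum.} I would show that these $2m$ values exhaust the spectrum. Let $P=[u_1,\dots,u_m]$, which is invertible. Conjugating $B$ by $\mathrm{diag}(P,P)$ gives the block matrix
\[
\begin{bmatrix}0 & I\\ \Lambda_T & -\Lambda_D\end{bmatrix},
\qquad \Lambda_T=\mathrm{diag}(\mu_k),\quad \Lambda_D=\mathrm{diag}(\omega_k).
\]
A permutation similarity (interleaving coordinates $k$ and $m+k$) turns this into a block diagonal matrix with $2\times2$ blocks $\begin{bmatrix}0&1\\ \mu_k&-\omega_k\end{bmatrix}$. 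The characteristic polynomial of $B$ is therefore $\prod_k(\lambda^2+\omega_k\lambda-\mu_k)$, so the listed values, counted with multiplicity, are all the eigenvalues.

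\textbf{Main obstacle.} The only delicate point is the degenerate case $\omega_k^2+4\mu_k=0$, where $\lambda_k^+=\lambda_k^-$. There $v_k^+=v_k^-$, and the corresponding $2\times2$ block is a nontrivial Jordan block, so $B$ is not diagonalisable. I would state that the eigenvalue formula still holds, but that the eigenvectors $v_k^\pm$ are then a single eigenvector. A generalised eigenvector, for instance $(0,u_k)^\top$, is needed to complete a basis, mirroring critical damping in the scalar case.
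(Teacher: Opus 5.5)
Your Steps~1--2 are the paper's own argument: write $v=(u,w)^\top$, read off $w=\lambda u$, substitute the shared eigenvector, and solve $\lambda^2+\omega_k\lambda-\mu_k=0$. The paper's proof also ends with $\lambda_k^\pm=\tfrac12\big(-\omega_k\pm\sqrt{\omega_k^2+4\mu_k}\big)$, so you are right that the $+\omega_k$ in the statement is a sign slip. Your Step~3 (similarity to $2\times2$ companion blocks, giving $\det(\lambda I-B)=\prod_k(\lambda^2+\omega_k\lambda-\mu_k)$) and your Jordan-block remark for $\omega_k^2+4\mu_k=0$, with generalised eigenvector $(0,u_k)^\top$, are correct and go beyond the paper, which never shows that these $2m$ values exhaust the spectrum and does not address the non-diagonalisable case.
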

\begin{proof}
	Let $v=(u,w)^\top$ be an eigenvector of $B$ with eigenvalue $\lambda$. Then the eigenvalue equation $Bv=\lambda v$ gives the block system
	\begin{align}
		w&=\lambda u,\label{eq:blockdiagprop1}\\
		Tu-Dw&=\lambda w. \label{eq:blockdiagprop2}
	\end{align}
	Substituting \eqref{eq:blockdiagprop1} into \eqref{eq:blockdiagprop2} we obtain
	\[Tu-\lambda Du=\lambda^2u\implies Tu=\lambda^2u+\lambda Du.\]
	Since $T$ and $D$ share the eigenbasis $\{u_k\}$ we set $u=u_k$ and use $Tu_k=\mu_ku_k, Du_k=\omega_ku_k$ to obtain
	\[\mu_ku_k=\lambda^2u_k+\lambda\omega_ku_k.\]
	Since $u_k\neq0$ we obtain the scalar quadratic
	\[\lambda^2+\omega_k\lambda-\mu_k=0.\]
	Hence
	\[\lambda^\pm_k=\frac{-\omega_k\pm\sqrt{\omega^2_k+4\mu_k}}{2}.\]
	The corresponding eigenvectors are $v_k^\pm=(u_k,\lambda^\pm_ku_k)^\top$ from \eqref{eq:blockdiagprop1}. 
\end{proof}
The hypothesis of simultaneous diagonalisability is satisfied automatically in an important special case where both the stiffness and damping matrices are tridiagonal Toeplitz with the same ratio of off-diagonal entries. This is common in many dynamical systems problems. 
\begin{cor}[Simultaneously diagonalisable eigendecomposition]\label{cor:blockdiagspecific}
	When $T$ and $D$ from Proposition \ref{prop:blockdiaggeneral} are both tridiagonal Toeplitz, that is, $T=\mathrm{tridiag}(c,a,b)$ and $D=\mathrm{tridiag}(\gamma,\alpha,\beta)$, with $\gamma/\beta=c/b$ (so that they share the same eigenvectors), the eigenvalues are
	\[\lambda_k^\pm = \frac{\omega_k\pm\sqrt{\omega^2_k+4\mu_k}}{2}\]
	where
	\begin{equation*}
		\mu_k = a+2b\left(\frac{c}{b}\right)^\frac{1}{2}\cos\left(\frac{k\pi}{m+1}\right), \qquad
		\omega_k = \alpha + 2\beta\left(\frac{\gamma}{\beta}\right)^\frac{1}{2}\cos\left(\frac{k\pi}{m+1}\right),
	\end{equation*}
	and the eigenvectors are
	\[v^\pm_k=(u_k,\lambda^\pm u_k)\text{ where }(u_k)_j=\left(\frac{c}{b}\right)^\frac{1}{2}\sin\left(\frac{jk\pi}{m+1}\right).\]
\end{cor}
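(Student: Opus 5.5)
The plan is to reduce Corollary~\ref{cor:blockdiagspecific} to Proposition~\ref{prop:blockdiaggeneral}. That requires checking its one hypothesis: $T=\mathrm{tridiag}(c,a,b)$ and $D=\mathrm{tridiag}(\gamma,\alpha,\beta)$ must share a common eigenbasis $\{u_k\}_{k=1}^m$. Once that is done, the eigenvalues $\mu_k$ of $T$ and $\omega_k$ of $D$ on each $u_k$ can be read off from Lemma~\ref{lem:etri}, and substituted into the formulae of Proposition~\ref{prop:blockdiaggeneral}.

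First, apply Lemma~\ref{lem:etri} to $T$ (assuming $b\neq0$). This gives eigenvalues $\mu_k=a+2b(c/b)^{1/2}\cos(k\pi/(m+1))$ with eigenvectors whose $j$-th entry is $(c/b)^{j/2}\sin(jk\pi/(m+1))$.

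Next, apply the same lemma to $D$ (assuming $\beta\neq 0$). Its eigenvectors have $j$-th entry $(\gamma/\beta)^{j/2}\sin(jk\pi/(m+1))$. The hypothesis $\gamma/\beta=c/b$ makes these literally the same vectors as for $T$, for each $k$. The corresponding eigenvalue of $D$ is $\omega_k=\alpha+2\beta(\gamma/\beta)^{1/2}\cos(k\pi/(m+1))$.

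The vectors $u_k$, $k=1,\dots,m$, are linearly independent. One way to see this: they arise from the eigenvalues $\mu_k$, which are distinct whenever $bc\neq0$ since the cosines are distinct. More robustly, each $u_k$ is a diagonal similarity $\mathrm{diag}((c/b)^{j/2})$ applied to the discrete sine basis, which is orthogonal. Hence $\{u_k\}$ is a common eigenbasis, and $T$ and $D$ are simultaneously diagonalisable as Proposition~\ref{prop:blockdiaggeneral} requires.

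Invoking Proposition~\ref{prop:blockdiaggeneral}, each $k$ yields the scalar quadratic $\lambda^2+\omega_k\lambda-\mu_k=0$, whose roots are $\lambda_k^\pm$, with eigenvectors $v_k^\pm=(u_k,\lambda_k^\pm u_k)^\top$. A sign-convention check is needed here. The proof of Proposition~\ref{prop:blockdiaggeneral} gives $(-\omega_k\pm\sqrt{\omega_k^2+4\mu_k})/2$ for the block $-D$, while the displayed statements carry $+\omega_k$. I would state the result consistently with the derivation, or equivalently interpret the lower-right block as $+D$.

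The main obstacle is the bookkeeping in the eigenvector formula rather than any real difficulty. The statement writes $(u_k)_j=(c/b)^{1/2}\sin(jk\pi/(m+1))$, whereas Lemma~\ref{lem:etri} gives the power $(c/b)^{j/2}$. These agree only when $c=b$, which is the symmetric case relevant to \eqref{eq:sdhcs}. I would prove the corollary with the $(c/b)^{j/2}$ weighting and note that, since eigenvectors are defined only up to scaling, the two forms coincide in the symmetric setting.

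Degenerate cases also need care. If $\beta=0$ or $\gamma=0$, then $D$ is a diagonal or triangular Toeplitz matrix and Lemma~\ref{lem:etri} does not apply. In the natural case $D=\alpha I$, every vector is an eigenvector of $D$, so simultaneous diagonalisability is immediate and $\omega_k=\alpha$. This recovers Proposition~\ref{prop:esystem}. Complex square roots when $c/b<0$ are handled by fixing one branch of $(c/b)^{1/2}$ and using it for both $T$ and $D$.
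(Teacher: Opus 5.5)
Your proposal is correct and follows the paper's own argument: apply Lemma~\ref{lem:etri} to both $T$ and $D$, observe that $\gamma/\beta=c/b$ makes their eigenvectors coincide, and then invoke Proposition~\ref{prop:blockdiaggeneral}. The extra points you raise are sound refinements that the paper's proof passes over silently. The derivation actually gives $(-\omega_k\pm\sqrt{\omega_k^2+4\mu_k})/2$ for the block $-D$. The eigenvector weight should be $(c/b)^{j/2}$ rather than $(c/b)^{1/2}$. Linear independence of the $u_k$ needs checking, and requires $bc\neq0$. Finally, a single branch of $(c/b)^{1/2}=(\gamma/\beta)^{1/2}$ must be used throughout, so that $\mu_k$ and $\omega_k$ are paired on the same $u_k$.
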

\begin{proof}
	The eigenvectors of the tridiagonal Toeplitz matrix $\mathrm{tridiag}(c,a,b)$ are by Lemma \ref{lem:etri}
	\[(u_k)_j = \left(\frac{c}{b}\right)^{\frac{j}{2}} \sin\left(\frac{jk\pi}{m+1}\right)\text{ for }j=1,2,\dots,m.\]
	These eigenvectors depend only on the ratio $c/b$ and the dimension $m$. Since $\gamma/\beta=c/b$ the matrices $T$ and $D$ share the same eigenvectors $\{u_k\}_{k=1}^m$. By Lemma \ref{lem:etri} the corresponding eigenvalues are
	\[Tu_k=\mu_ku_k \text{ and }Du_k=\omega_ku_k\]
	where
	\[\mu_k=a+2b\left(\frac{c}{b}\right)^\frac{1}{2}\cos\left(\frac{k\pi}{m+1}\right)\text{ and }\omega_k=\alpha+2\beta\left(\frac{\gamma}{\beta}\right)^\frac{1}{2}\cos\left(\frac{k\pi}{m+1}\right).\]
	Since $T$ and $D$ are simultaneously diagonalisable with common eigenbasis $\{u_k\}$ we can utilise Proposition \ref{prop:blockdiaggeneral}. Applying it directly yields the eigenvalues 
	\[\lambda_k^\pm=\frac{\omega_k\pm\sqrt{\omega_k^2+4\mu_k}}{2}\]
	and eigenvectors $v_k^\pm=(u_k,\lambda_k^\pm u_k)^\top$ for $k=1,2,\dots,m$.
\end{proof}

\section{Solutions to the semi-discrete hyperbolic curvature flow with boundary} \label{sec:solutions}

\subsection{Self similar solutions}\label{sec:selfsimilar}
Self-similar solutions occupy a privileged position in the analysis of geometric evolution equations. They are the simplest non-trivial solutions, evolving by a fixed combination of rigid motions and a global rescaling, and their classification often exposes the geometric content of the underlying flow. For the smooth curve shortening flow in the plane, Abresch and Langer \cite{UA86} classified the homothetically shrinking closed immersed curves, while for non-compact curves Halldorsson \cite{HH12} gave a comprehensive classification including translators, rotators and dilators together with the geometry arising from their combinations. Altschuler, Altschuler, Angenent and Wu \cite{DA13} then extended this program to space curves.

In the semi-discrete setting the results have so far been more constrained. For closed polygons under the parabolic flow only scaling self-similar solutions exists \cite{JM25}, while the hyperbolic closed polygon flow also admits a family of pure rotations in the undamped case \cite{JM25_1}. These are consequences of the periodic boundary conditions and the resulting circulant structure. Prescribing boundary data replaces the circulant matrix by a tridiagonal Toeplitz one, in doing so the self-similar solution landscape becomes richer; scaling, rotation and translation all occur, both individually and in combination. In \cite{JM24_1} the present authors constructed scaling, translating and rotating solutions of the parabolic flow with boundary by three independent arguments. The purpose of this section is to give a unified treatment for \eqref{eq:sdhcs}. 
\begin{defn}[Self-similar solution]
    A family of piecewise linear curves $\vec{X}(t)$ in $\mathbb{R}^p$, indexed by $t$, is \emph{self-similar} if its members are related by
    \begin{equation}\label{eq:selfsimilardef}
        \vec{X}(t)=g(t)\vec{X}_0R(t)+\boldsymbol{1}\otimes h(t)
    \end{equation}
    for every $t$, where $g:\mathbb{R}\to(0,\infty)$ is a twice differentiable \emph{scaling factor}, $R:\mathbb{R}\to SO(p)$ is a twice differentiable \emph{rotation}, $h:\mathbb{R}\to\mathbb{R}^p$ is a twice differentiable \emph{translation} and $\boldsymbol{1}\otimes h(t)$ denotes the $n\times p$ matrix with every row equal to $h(t)$. To ensure $\vec{X}(0)=\vec{X}_0$ we assume $g(0)=1,R(0)=I$ and $h(0)=0$.
\end{defn}

\begin{rem}[Self-similar boundary conditions]
    For the ansatz \eqref{eq:selfsimilardef} to be consistent with the boundary terms of \eqref{eq:sdhcs}, the prescribed boundary trajectories must themselves be self-similar, that is,
    \[f_1(t)=g(t)X_1(0)R(t)+h(t)\text{ and } f_n(t)=g(t)X_n(0)R(t)+h(t).\]
\end{rem}
Throughout this section we restrict our attention to plane curves, that is $p=2$. Any twice differentiable family $R:\mathbb{R}\to SO(2)$ with $R(0)=I$ may be written $R(t)=e^{\theta(t)J}$ where
\[J=
\begin{bmatrix}
0 & -1 \\
1 & 0
\end{bmatrix}
\]
is the standard generator of $\mathfrak{so}(2)$ and $\theta$ is twice differentiable with $\theta(0)=0$. Writing $\omega\vcentcolon=\theta'$ for angular velocity we then have $R'(t)=R(t)\Omega(t)$ with $\Omega(t)\vcentcolon=\omega(t)J\in\mathfrak{so}(2)$. We call the initial curve \emph{non-degenerate} if its interior vertices are not all coincident, a condition excluding only the configuration in which the entire interior collapses to a single point. Finally, we write throughout
\[\alpha \vcentcolon=X_1(0)e_1+X_n(0)e_n\]
for the $(n-2)\times p$ matrix whose only non-zero rows are the first and last, equal to the initial boundary positions.

\begin{prop}[Reduction to a constant matrix]\label{prop:selfsimconstant}
    Let $\vec{X}(t)$ be a self-similar solution of \eqref{eq:sdhcs} with self-similar boundary conditions and a non-degenerate initial curve, and define
    \[M(t)\vcentcolon=\frac{g''(t)+\beta g'(t)}{g(t)}I+\left(\frac{2g'(t)}{g(t)}+\beta\right)\Omega(t)+\Omega'(t)+\Omega^2(t).\]
    Then $M(t)\equiv M_0=p_0I+q_0J$ is a constant matrix.
\end{prop}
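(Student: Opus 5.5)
Substitute the ansatz \eqref{eq:selfsimilardef} into \eqref{eq:sdhcs} and separate the $t$-dependence. Write $\vec U(t)=g(t)\vec U_0R(t)+\boldsymbol 1\otimes h(t)$, where $\vec U_0$ denotes the interior rows of $\vec X_0$. Using $R'=R\Omega$, we have $R''=R(\Omega'+\Omega^2)$. Hence
\[
\ddot{\vec U}+\beta\dot{\vec U}=\vec U_0\bigl[(g''+\beta g')I+(2g'+\beta g)\Omega+g(\Omega'+\Omega^2)\bigr]R\cdots
\]
More carefully, $\frac{\mathrm d^2}{\mathrm dt^2}(gR)=g''R+2g'R\Omega+gR(\Omega'+\Omega^2)$. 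In the plane, $R$ and $\Omega$ commute, since both are polynomials in $J$. So the left side equals $g\,\vec U_0 M(t)R(t)+\boldsymbol 1\otimes(h''+\beta h')$.

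On the right, $A(\boldsymbol 1\otimes h)+(h\text{ in rows }1,m)=0$: every row of $A$ plus the boundary contributions sums to zero. Likewise, the self-similar boundary data combine with $g\vec U_0R$ to give $g(A\vec U_0+\alpha)R$. The equation therefore becomes
\[
g\,\vec U_0M(t)R(t)+\boldsymbol 1\otimes(h''+\beta h')=g\,(A\vec U_0+\alpha)R(t).
\]
Multiplying on the right by $R^{-1}$ and dividing by $g>0$ gives
\[
\vec U_0M(t)-(A\vec U_0+\alpha)=-\boldsymbol 1\otimes\frac{(h''+\beta h')R^{-1}}{g}=: -\boldsymbol 1\otimes k(t).
\]

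Next, note that $M(t)$ lies in the commutative algebra spanned by $I$ and $J$. This holds because $\Omega=\omega J$ and $\Omega^2=-\omega^2I$. Write $M(t)=p(t)I+q(t)J$. Then the identity above reads, row by row, $X_i(0)(pI+qJ)+k(t)=c_i$ for $i=2,\dots,n-1$. Here $c_i$ is the constant row $(A\vec U_0+\alpha)_i=N_i(0)$, the initial discrete curvature.

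To remove the unknown translation term $k(t)$, subtract row $j$ from row $i$. This gives
\[
(X_i(0)-X_j(0))(p(t)I+q(t)J)=c_i-c_j,
\]
which is independent of $t$.

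Non-degeneracy supplies some pair $i,j$ of interior vertices with $w:=X_i(0)-X_j(0)\neq0$. The map $(p,q)\mapsto w(pI+qJ)$ is injective, because $pI+qJ$ is $\sqrt{p^2+q^2}$ times a rotation and $w\neq0$. Since $w(p(t)I+q(t)J)$ is constant, $p(t)$ and $q(t)$ are constant, equal to $p_0,q_0$. Hence $M(t)\equiv M_0=p_0I+q_0J$.

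The main obstacle is the bookkeeping in the first step. One must check that the boundary forcing and the translation combine exactly, so that the right side carries the factor $R$ on the right. One must also check that the $h$-terms cancel under $A$ together with the boundary contribution. Once that is verified, the commutativity of $\mathrm{span}\{I,J\}$ and the differencing argument finish the proof.
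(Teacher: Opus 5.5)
Your proposal is correct and follows essentially the same route as the paper: substitute the ansatz, use commutativity in the plane to write the left side as $g\,\vec U_0M(t)R(t)+\boldsymbol 1\otimes(h''+\beta h')$, cancel the translation on the right via the zero row sums of the stencil, post-multiply by $R^{-1}$, difference two rows to eliminate the translation, and invoke non-degeneracy. The only cosmetic difference is in the last step: you argue injectivity of $(p,q)\mapsto w(pI+qJ)$ because $pI+qJ$ is a scaled rotation, whereas the paper notes that $s$ and $sJ$ are orthogonal non-zero vectors; these are equivalent.
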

\begin{proof}
    Write $S(t)=g(t)R(t)$, then the interior of \eqref{eq:selfsimilardef} reads
    \[\vec{U}(t)=\vec{U}_0S(t)+\boldsymbol{1}\otimes h(t).\]
    Since all factors commute,
    \begin{multline*}
        S''(t)+\beta S'(t)= \\
        \left( (g''(t)+\beta g'(t))I+(2g'(t)+\beta g(t))\Omega(t)+g(t)(\Omega'(t)+\Omega^2(t))\right)R(t)=g(t)M(t)R(t),
    \end{multline*}
    and therefore
    \[\ddot{\vec{U}}(t)+\beta\dot{\vec{U}}(t) = g(t)\vec{U}_0M(t)R(t)+\boldsymbol{1}\otimes(h''(t)+\beta h'(t)).\]
    For the right hand side of \eqref{eq:sdhcs}, observe that the interior row sums of $A$ vanish whilst the first and last row sums equal $-1$, so $A(\boldsymbol{1}\otimes h(t))=-h(t)(e_1+e_n)$. The self-similar boundary forcing is
    \[\vec{f}(t)=f_1(t)e_1+f_n(t)e_n=g(t)\alpha R(t)+h(t)(e_1+e_n),\]
    thus the two $h$ contributions cancel exactly and
    \[A\vec{U}(t)+\vec{f}(t)=g(A\vec{U}_0+\alpha)R(t).\]
    Equating both sides, post multiplying by $R^{-1}$ and dividing by $g>0$ yields
    \begin{equation}\label{eq:selfsimeq1}
        \vec{U}_0M(t)+\frac{1}{g(t)}\left(\boldsymbol{1}\otimes(h''(t)+\beta h'(t)\right)R^{-1}(t)=A\vec{U}_0+\alpha.
    \end{equation}
    The second term on the left has all rows equal, while the right hand side is independent of $t$. Taking the difference of any distinct rows $i$ and $j$ of \eqref{eq:selfsimeq1} eliminates the translation term, so
    \[\left(\left(\vec{U}_0\right)_i-\left(\vec{U}_0\right)_j\right)M(t)=\left(A\vec{U}_0+\alpha\right)_i-\left(A\vec{U}_0+\alpha\right)_j\quad\text{for all }t.\]
    By the non-degeneracy requirement some difference $s=\left(\vec{U}_0\right)_i-\left(\vec{U}_0\right)_j$ is non-zero. Since $M(t)=\varphi_1(t)I+\varphi_2(t) J$ with
    \[\varphi_1(t)=\frac{g''(t)+\beta g'(t)}{g(t)}-\omega^2(t)\text{ and }\varphi_2(t)=\left(\frac{2g'(t)}{g(t)}+\beta\right)\omega(t)+\omega'(t),\]
    and since $s$ and $sJ$ are orthogonal non-zero vectors
    \[sM(t)=\varphi_1(t)sI+\varphi_2(t)sJ\]
    determines $\varphi_1(t),\varphi_2(t)$ uniquely and forces both to be constant, hence 
    \[M(t)\equiv M_0=p_0I+q_0J\]
    with $p_0=\varphi_1$ and $q_0=\varphi_2$.
\end{proof}

Proposition \ref{prop:selfsimconstant} separates the classification into a sequence of questions which the remainder of this subsection addresses.
\begin{lem}[Solution of the nonlinear coupled system]\label{lem:selfsimilarcoupled} 
    Let $p_0,q_0\in\mathbb{R}$ and write both $M_0=p_0I+q_0J$ and $m_0=p_0+iq_0$.
    \begin{enumerate}[label=(\roman*)]
        \item There exists $\gamma,\omega_0\in\mathbb{R}$ such that, for $\Omega_0=\omega_0J$,
        \begin{equation}\label{eq:selfsimeq3}
            M_0=(\gamma^2+\beta\gamma)I+(2\gamma+\beta)\Omega_0+\Omega_0^2 \mbox{.}
        \end{equation}
        \item The coupled system
        \[\frac{g''(t)+\beta g'(t)}{g(t)}-\omega^2(t)=p_0,\qquad \left(\frac{2g'(t)}{g(t)}+\beta\right)\omega(t)+\omega'(t)=q_0\]
        with $g(0)=1$ admits, for $(\gamma,\omega_0)\neq(-\beta/2,0)$, the solution
        \begin{equation}\label{eq:selfsimeq2}
            g(t)e^{i\theta(t)}=\frac{e^{(\gamma+i\omega_0)t}+\rho e^{-(\gamma+\beta+i\omega_0)t}}{1+\rho}
        \end{equation}
        where $\rho\in\mathbb{C}$ is determined by the initial data $g'(0)$ and $\omega(0)$, and in the case $(\gamma,\omega_0)=(-\beta/2,0)$, where the two exponents in \eqref{eq:selfsimeq2} coincide, the solution 
        \[g(t)e^{i\theta(t)}=(1+ct)e^{-\beta t/2}\]
        where $c\in\mathbb{C}$ is determined by the initial data $g'(0)$ and $\omega(0)$. In either case $g(t)$ and $\theta(t)$ are the modulus and argument of the right hand side respectively. The rotation matrix is recovered by $R(t)=e^{\theta(t)J}$.
    \end{enumerate}
\end{lem}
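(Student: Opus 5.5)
The plan is to identify $\mathrm{span}\{I,J\}\subset M_2(\mathbb{R})$ with $\mathbb{C}$ via $I\mapsto 1$, $J\mapsto i$. This is a field isomorphism, since $J^2=-I$ and everything commutes. Under it $\Omega_0=\omega_0J\mapsto i\omega_0$, $M_0\mapsto m_0$, and the right-hand side of \eqref{eq:selfsimeq3} maps to $\gamma^2+\beta\gamma+(2\gamma+\beta)i\omega_0-\omega_0^2=z^2+\beta z$, where $z:=\gamma+i\omega_0$.

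For part (i) it therefore suffices to solve the complex quadratic $z^2+\beta z-m_0=0$. This always has the roots
\[z=\frac{-\beta\pm\sqrt{\beta^2+4m_0}}{2}\in\mathbb{C},\]
by the fundamental theorem of algebra, or simply by taking a complex square root. I then set $\gamma=\operatorname{Re}z$ and $\omega_0=\operatorname{Im}z$. The other root is $-z-\beta=-(\gamma+\beta+i\omega_0)$, which explains the second exponent in \eqref{eq:selfsimeq2}.

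For part (ii), define the complex function $\zeta(t):=g(t)e^{i\theta(t)}$, so $\zeta(0)=1$. With $\theta'=\omega$ we compute
\[\frac{\zeta'}{\zeta}=\frac{g'}{g}+i\omega,\qquad \frac{\zeta''}{\zeta}=\frac{g''}{g}-\omega^2+i\left(\frac{2g'\omega}{g}+\omega'\right).\]
Hence
\[\frac{\zeta''+\beta\zeta'}{\zeta}=\left(\frac{g''+\beta g'}{g}-\omega^2\right)+i\left(\Big(\frac{2g'}{g}+\beta\Big)\omega+\omega'\right).\]
The real and imaginary parts are exactly the two left-hand sides of the coupled system. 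So the nonlinear system is equivalent, wherever $g>0$, to the single linear constant-coefficient ODE
\[\zeta''+\beta\zeta'-m_0\zeta=0.\]
Its characteristic polynomial is the quadratic from part (i), with roots $z_1=\gamma+i\omega_0$ and $z_2=-(\gamma+\beta+i\omega_0)$.

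These roots are distinct unless $z_1=z_2$, that is unless $2z_1=-\beta$, which means $(\gamma,\omega_0)=(-\beta/2,0)$.
\begin{itemize}
\item In the distinct case the general solution is $a e^{z_1t}+b e^{z_2t}$. Normalising with $\zeta(0)=1$ gives \eqref{eq:selfsimeq2} with $\rho=b/a$, provided $a\neq 0$; the case $a=0$ is absorbed by relabelling the roots, or by allowing $\rho=\infty$.
\item In the repeated case the general solution is $(1+ct)e^{-\beta t/2}$.
\end{itemize}
The constant $\rho$ (or $c$) is fixed by $\zeta'(0)=g'(0)+i\omega(0)$, using $g(0)=1$ and $\theta(0)=0$. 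In the distinct case this gives
\[\rho=\frac{z_1-\zeta'(0)}{\zeta'(0)-z_2},\]
and in the repeated case $c=\zeta'(0)+\beta/2$. Conversely, any such $\zeta$ gives a solution with $g=|\zeta|$ and $\theta=\arg\zeta$, defined by continuous lifting from $\theta(0)=0$, and with $R=e^{\theta J}$.

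The main obstacle is the domain of validity. The equivalence, and the recovery of $g$ and $\theta$ as modulus and argument, requires $\zeta\neq0$. Zeros of $\zeta$ can occur, for instance when $\rho=-e^{(2\gamma+\beta+2i\omega_0)t_*}$. So I would state the solution on the maximal interval containing $0$ on which $\zeta$ does not vanish. There $g>0$ and $\theta$ is twice differentiable, which is precisely the regularity the definition of self-similar solution requires. Separately, I would check the degenerate case $1+\rho=0$, which is excluded because $\zeta(0)=1$ forces $a+b=1$.
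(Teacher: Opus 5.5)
Your proof is correct. It reaches the same linear equation as the paper, $\zeta''+\beta\zeta'-m_0\zeta=0$, but by a shorter path. Part (i) is handled identically: the paper's ansatz $r=\gamma+i\omega_0$ is your identification of $\mathrm{span}\{I,J\}$ with $\mathbb{C}$.

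For part (ii) the paper takes two steps:
\begin{enumerate}
\item It sets $f=g'/g$ and $z=f+i\omega$, giving the Riccati equation $z'+z^2+\beta z=m_0$.
\item It linearises with $z=u'/u$, solves for $u$, and then integrates $z$ and exponentiates to obtain $g e^{i\theta}=u(t)/u(0)$.
\end{enumerate}
Your polar ansatz $\zeta=ge^{i\theta}$ is exactly the composite of these two substitutions, so $\zeta=u/u(0)$. Your direct computation of $(\zeta''+\beta\zeta')/\zeta$ shows the equivalence with the coupled system in both directions without passing through a complex logarithm.

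Your route is more precise at three points the paper passes over.
\begin{itemize}
\item The paper's step $\int_0^t z\,\mathrm{d}s=\ln u(t)-\ln u(0)$ tacitly needs $u\neq 0$ on $[0,t]$. You make this explicit as the maximal nonvanishing interval, which matters because the definition of self-similarity requires $g>0$.
\item The paper's $\rho=c_2/c_1$ tacitly assumes $c_1\neq 0$. You handle $c_1=0$ by relabelling the roots, which is legitimate since both roots give admissible $(\gamma,\omega_0)$ in part (i).
\item You give explicit values $\rho=(z_1-\zeta'(0))/(\zeta'(0)-z_2)$ and $c=\zeta'(0)+\beta/2$, where the paper only asserts that these constants are determined by $g'(0)$ and $\omega(0)$.
\end{itemize}
In exchange, the paper's route follows the standard Riccati-then-linearise template, which reveals the linear structure without having to guess the polar ansatz in advance.
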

\begin{proof}
    Consider the substitution
\[f(t)=\frac{g'(t)}{g(t)}\implies f'(t) = \frac{g''(t)}{g(t)}-f^2(t).\]
Then
\begin{align*}
	f'(t)+f^2(t)+\beta f(t)-\omega^2(t)&=p_0,\\
	\left(2f(t)+\beta\right)\omega(t)+\omega'(t)&=q_0.
\end{align*}
Now consider $z(t)=f(t)+i\omega(t)$ then
\begin{align*}
	z'(t)&=f'(t)+i\omega'(t),\\
	z^2(t)&=\left(f(t)+i\omega(t)\right)=\left(f^2(t)-\omega^2(t)\right)+ i\left(2f(t)\omega(t)\right),\\
	\beta z(t) &= \beta f(t)+i\beta\omega(t).
\end{align*}
So
\begin{align*}
	z'(t)+z^2(t)+\beta z(t) &= \left(f'(t)+f^2(t)+\beta f(t) - \omega^2(t)\right) + i\left(\omega'(t)+2f(t)\omega(t)+\beta\omega(t)\right)\\
				&=p_0+iq_0=m_0.
\end{align*}
We now have a Riccati equation; as such we utilise the substitution
\[z(t)=\frac{u'(t)}{u(t)}\implies z'(t)=\frac{u''(t)}{u(t)}-z^2(t).\]
Now the equation becomes linear in $u$,
\begin{equation*}
	\frac{u''(t)}{u(t)} + \beta\frac{u'(t)}{u(t)}=m_0
	\implies u''(t)+\beta u'(t)-m_0\, u(t)=0.
\end{equation*}
Using the ansatz $u(t)=e^{rt}$ we obtain
\[m_0=r^2+\beta r.\]
Setting $r=\gamma+i\omega_0$ so
\begin{align*}
	m_0&=(\gamma+i\omega_0)^2+\beta(\gamma+i\omega_0)
	   =\gamma^2+\beta\gamma-\omega_0^2+i(2\gamma\omega_0+\beta\omega_0)\\
	&\implies m_0=(\gamma^2+\beta\gamma)I+(2\gamma +\beta)\Omega+\Omega^2.
\end{align*}
Thus we have (i). Now returning to solving the ordinary differential equation
\begin{equation*}
	r = \frac{-\beta\pm\sqrt{\beta^2+4m_0}}{2}
	  \implies r_1,r_2=\gamma+i\omega_0,-\gamma-\beta-i\omega_0.
\end{equation*}
Thus we can write 
\begin{align*}
	u(t) &= c_1e^{(\gamma+i\omega_0)t}+c_2e^{-(\gamma+\beta+i\omega_0)t}\\
	\implies z(t)&=\frac{u'(t)}{u(t)}=\frac{c_1(\gamma+i\omega_0)e^{(\gamma+i\omega_0)}t-c_2(\gamma+\beta+i\omega_0)e^{-(\gamma+\beta+i\omega_0)t}}{c_1e^{(\gamma+i\omega_0)t}+c_2e^{-(\gamma+\beta+i\omega_0)t}}.
\end{align*}
Now dividing through by $c_1e^{(\gamma+i\omega_0)t}$ and setting $\rho=c_2/c_1$ we obtain
\[z(t) =\frac{(\gamma+i\omega_0) - \rho(\gamma+\beta+i\omega_0)e^{-(2\gamma+\beta+i2\omega_0)t}}{1+\rho e^{-(2\gamma+\beta+i2\omega_0)t}}. \]
Recall $z=u'/u$ and $z=g'/g+i\omega$ then
\[\int_0^t z(s)\;\mathrm{d}s=\ln u(t) - \ln u(0).\]
The left side separates as
\[\int_0^tf(s)\;\mathrm{d}s + i\int_0^t\omega(s)\;\mathrm{d}s=\ln g(t)+i\theta(t) \implies \ln g(t)+i\theta(t)=\ln \frac{u(t)}{u(0)}.\]
Exponentiating
\[g(t)e^{i\theta(t)}=\frac{u(t)}{u(0)}=\frac{c_1e^{(\gamma+i\omega_0)t}+c_2e^{-(\gamma+\beta+i\omega_0)t}}{c_1+c_2}=\frac{e^{(\gamma+i\omega_0)t}+\rho e^{-(\gamma+\beta+i\omega_0)t}}{1+\rho}.\]
In the case $(\gamma,\omega_0)=(-\beta/2,0)$ we have $\beta^2+4m_0=0$ and the repeated root $r_1,r_2=-\beta/2$, so
\[u(t)=(c_1+c_2t)e^{-\beta t/2},\]
and the same integration gives
\[g(t)e^{i\theta(t)}=\frac{u(t)}{u(0)}=(1+ct)e^{-\beta t/2}\text{ for }c=\frac{c_2}{c_1}.\]
\end{proof}

With Lemma \ref{lem:selfsimilarcoupled} in hand, we look to the configurations that are admissible. The translation plays no role in this question, its contribution to the right hand side of \eqref{eq:sdhcs} cancels via the row-sum structure of $A$ and on the left hand side it enters only through the uniform term
\[\boldsymbol{1}\otimes (h''(t)+\beta h'(t)),\]
which vanishes for precisely the translations that may accompany a scaling-rotation. Thus the equation determining $\vec{U}_0$ therefore contains no trace of $h$, and existence and uniqueness of a configuration may be obtained with $h\equiv 0$.

\begin{prop}[Existence and uniqueness of a configuration]\label{prop:selfsimexistence}
     Let $(\gamma,\omega_0,\beta)$ be motion parameters with associated matrix $M_0$ as in \eqref{eq:selfsimeq3}, let $(g,R)$ be a corresponding admissible pair by Lemma \ref{lem:selfsimilarcoupled} and let $X_1(0),X_n(0)\in\mathbb{R}^2$ be a boundary configuration. Then the scaling-rotation $\vec{X}(t)=g(t)\vec{X}_0R(t)$ solves \eqref{eq:sdhcs} if and only if the interior configuration satisfies the Sylvester equation
     \begin{equation}\label{eq:sylvester}
         A\vec{U}_0-\vec{U}_0M_0=-\alpha.
     \end{equation}
     In particular, a unique configuration realising the motion exists if and only if $\sigma(A)\cap\sigma(M_0)=\emptyset$.
\end{prop}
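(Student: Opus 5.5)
The plan is to reuse the computation in the proof of Proposition~\ref{prop:selfsimconstant}, with $h\equiv0$. Substituting the ansatz $\vec{U}(t)=g(t)\vec{U}_0R(t)$ into \eqref{eq:sdhcs}, the left hand side is $\vec{U}_0\bigl(S''(t)+\beta S'(t)\bigr)=g(t)\vec{U}_0M(t)R(t)$, where $S=gR$. Since $(g,R)$ is an admissible pair from Lemma~\ref{lem:selfsimilarcoupled}, the functions $\varphi_1,\varphi_2$ equal $p_0,q_0$ identically, so $M(t)\equiv M_0$.

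For the right hand side, the self-similar boundary forcing is $\vec{f}(t)=g(t)\alpha R(t)$. Hence $A\vec{U}(t)+\vec{f}(t)=g(t)\bigl(A\vec{U}_0+\alpha\bigr)R(t)$, which is exactly the identity used before. Equating both sides, right-multiplying by $R(t)^{-1}$ and dividing by $g(t)>0$, the flow holds for all $t$ if and only if $\vec{U}_0M_0=A\vec{U}_0+\alpha$. This is \eqref{eq:sylvester}. Both directions are immediate, because every step is an equivalence ($R$ is invertible and $g>0$).

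For the uniqueness clause, I will view $\mathcal{S}:X\mapsto AX-XM_0$ as a linear map on the space of real $(n-2)\times 2$ matrices. The plan is to invoke the classical Sylvester theorem: $\mathcal{S}$ is invertible if and only if $A$ and $M_0$ share no eigenvalue. The cleanest justification vectorises the map, $\mathrm{vec}(\mathcal{S}X)=(I\otimes A-M_0^\top\otimes I)\,\mathrm{vec}(X)$. The eigenvalues of this Kronecker sum are $\mu_k-\lambda$ for $\mu_k\in\sigma(A)$ and $\lambda\in\sigma(M_0)$. One can see this by simultaneously triangularising over $\mathbb{C}$, or directly, since $A$ is symmetric with eigenpairs from Lemma~\ref{lem:etri} and $M_0=p_0I+q_0J$ has eigenvalues $p_0\pm iq_0$ with eigenvectors $(1,\mp i)^\top$.

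So if $\sigma(A)\cap\sigma(M_0)=\emptyset$, then $\mathcal{S}$ is injective and hence bijective, and \eqref{eq:sylvester} has exactly one solution $\vec{U}_0$ for every $\alpha$. Conversely, if they share an eigenvalue, $\mathcal{S}$ has a nontrivial kernel. In that case any solution, if one exists, is non-unique, so a unique configuration cannot exist.

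The one subtle point is that $\sigma(A)\subset\mathbb{R}$ while $\sigma(M_0)=\{p_0\pm iq_0\}$. An intersection can therefore only occur when $q_0=0$ and $p_0=\mu_k$ for some $k$. In that case the kernel element $u_k\otimes v$ (with $v\in\mathbb{R}^2$ arbitrary) is real, so non-uniqueness holds over the reals and not just over $\mathbb{C}$. I expect this real-versus-complex bookkeeping, making sure the kernel element is a genuine real configuration, to be the only step needing care; everything else follows from the earlier computation.
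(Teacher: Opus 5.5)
Your proposal is correct and follows the paper's proof: you substitute the ansatz with $h\equiv 0$, reduce both sides to the invertible factor $g(t)R(t)$ acting on $\vec{U}_0M_0$ and $A\vec{U}_0+\alpha$, and appeal to the Sylvester theorem for the uniqueness clause. The only difference is that the paper simply cites Horn--Johnson. You instead justify the Sylvester criterion yourself via the Kronecker-sum vectorisation, and you handle the real-versus-complex kernel and the ``no solution or non-unique'' dichotomy explicitly.
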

\begin{proof}
     On substituting the scaling-rotation into \eqref{eq:sdhcs}, a computation as in the proof of Proposition \ref{prop:selfsimconstant} with $h\equiv0$ gives
     \[\ddot{\vec{U}}(t)+\beta\dot{\vec{U}}(t)=g(t)\vec{U}_0M_0R(t)\]
     for the left hand side and
     \[A\vec{U}(t) + \vec{f}(t)=g(t)(A\vec{U}_0+\alpha)R(t)\]
     for the right hand side. Since $g(t)R(t)$ is invertible, the two sides agree for all $t$ if and only if
     \[\vec{U}_0M_0=A\vec{U}_0+\alpha\]
     which is exactly \eqref{eq:sylvester}. This is a Sylvester equation in $\vec{U}_0$ and the solvability claim is the classical Sylvester theorem (see for example \cite[Theorem 2.4.4.1]{RH12}).
\end{proof}

\begin{cor}[Profile types]\label{cor:classification}
    The profile $\vec{U}_0$ from Proposition \ref{prop:selfsimexistence} is classified by the scaling-rotation pair $(\gamma,\omega_0)$ into three non-trivial types:
    \begin{enumerate}[label=(\roman*)]
        \item Pure scaling $(\gamma\neq0,\omega_0=0)$. A unique profile exists if and only if $\gamma^2+\beta\gamma\notin\sigma(A).$
        \item Pure rotation $(\gamma=0,\omega_0\neq0)$. A unique profile always exists when $\beta>0$. When $\beta=0$ a unique profile exists if and only if $\omega_0^2\notin-\sigma(A)$.
        \item Scaling with rotation $(\gamma\neq0,\omega_0\neq0)$. A unique profile exists when $2\gamma+\beta\neq0$. When $2\gamma+\beta=0$ a unique profile exists if and only if $-\beta^2/4-\omega_0^2\notin\sigma(A).$
    \end{enumerate}
\end{cor}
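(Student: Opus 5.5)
The plan is to reduce the statement to a spectral check in Proposition \ref{prop:selfsimexistence}. That proposition says a unique profile exists if and only if $\sigma(A)\cap\sigma(M_0)=\emptyset$, so it suffices to compute both spectra and read off, for each of the three motion types, when they meet.

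First I compute the spectrum of $A$. From Lemma \ref{lem:etri} with $a=-2$ and $b=c=1$, the eigenvalues are $\mu_k=-2+2\cos\left(\frac{k\pi}{m+1}\right)$, so $\sigma(A)\subset(-4,0)$ is real. Next I compute the spectrum of $M_0$. Expanding \eqref{eq:selfsimeq3} with $\Omega_0=\omega_0J$ and $J^2=-I$ gives
\[M_0=p_0I+q_0J,\qquad p_0=\gamma^2+\beta\gamma-\omega_0^2,\quad q_0=(2\gamma+\beta)\omega_0.\]
As a real $2\times2$ matrix, $p_0I+q_0J$ has eigenvalues $p_0\pm iq_0$. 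Since $\sigma(A)$ is real, the two spectra can meet only if $q_0=0$, and then only if $p_0\in\sigma(A)$. Hence a unique profile exists if and only if
\[q_0\neq0\quad\text{or}\quad p_0\notin\sigma(A).\]
One point needs a remark: the classical Sylvester theorem is stated over $\mathbb{C}$. The map $\vec{U}_0\mapsto A\vec{U}_0-\vec{U}_0M_0$ is real linear, and it is injective over $\mathbb{R}$ exactly when it is injective over $\mathbb{C}$. So the criterion also gives a unique real solution.

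I then split into the three cases of the statement.
\begin{enumerate}[label=(\roman*)]
\item For pure scaling, $\omega_0=0$, so $q_0=0$ and $p_0=\gamma^2+\beta\gamma$. A unique profile exists if and only if $\gamma^2+\beta\gamma\notin\sigma(A)$.
\item For pure rotation, $\gamma=0$, so $q_0=\beta\omega_0$. If $\beta>0$ then $q_0\neq0$, and a unique profile always exists. If $\beta=0$ then $q_0=0$ and $p_0=-\omega_0^2$, so the condition becomes $\omega_0^2\notin-\sigma(A)$.
\item For scaling with rotation, $\omega_0\neq0$, so $q_0=0$ exactly when $2\gamma+\beta=0$. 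If $2\gamma+\beta\neq0$, a unique profile always exists. If $\gamma=-\beta/2$, then $p_0=\beta^2/4-\beta^2/2-\omega_0^2=-\beta^2/4-\omega_0^2$, which gives the stated condition.
\end{enumerate}

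I expect no real obstacle here. The only step needing care is being explicit that $\sigma(M_0)$ is taken over $\mathbb{C}$, so that $p_0\pm iq_0$ with $q_0\neq0$ is automatically disjoint from the real set $\sigma(A)$.
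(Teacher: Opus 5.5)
Your proof is correct and follows essentially the same route as the paper. You write $M_0=p_0I+q_0J$ with $p_0=\gamma^2+\beta\gamma-\omega_0^2$ and $q_0=(2\gamma+\beta)\omega_0$, observe that $\sigma(M_0)=\{p_0\pm iq_0\}$ can meet the real set $\sigma(A)$ only when $q_0=0$ and $p_0\in\sigma(A)$, and split into the three cases; your version is slightly tidier, since the paper's proof writes the eigenvalues as $\omega_0^2\pm i\beta\omega_0$ in (ii) and $\beta^2/4-\omega_0^2$ in (iii) where $-\omega_0^2\pm i\beta\omega_0$ and $-\beta^2/4-\omega_0^2$ are meant, and it omits your remark on real versus complex solvability of the Sylvester equation.
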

\begin{proof}
Disjointness of $\sigma(M_0)$ from $\sigma(A)$ is guaranteed whenever the imaginary part

$(2\gamma+\beta)\omega_0$ is non-zero.
\begin{enumerate}[label=(\roman*)]
    \item With $\omega_0=0$ we have $M_0=(\gamma^2+\beta\gamma)I$ and \eqref{eq:sylvester} reduces to
    \[(A-(\gamma^2+\beta\gamma)I)\vec{U}_0=-\alpha\]
    which is uniquely solvable if and only if $\gamma^2+\beta\gamma\notin\sigma(A).$
    \item With $\gamma=0$ the eigenvalues of $M_0$ are $\omega_0^2\pm i\beta\omega_0$ which are non-real when $\beta\omega_0\neq0$. For $\beta=0$ they equal $-\omega_0^2$ which lies in $\sigma(A)$ when $\omega_0^2=-\lambda_k$ for some $k$.
    \item The imaginary part of the eigenvectors is $(2\gamma+\beta)\omega_0$, thus $\sigma(M_0)\notin\sigma(A)$, however when $(2\gamma+\beta)\omega_0=0$ we have that $\gamma=-\beta/2$ and thus the eigenvalues are $\beta^2/4-\omega_0^2$ and thus only have a solution when $\beta^2/4-\omega_0^2\notin\sigma(A)$.
\end{enumerate}
\end{proof}

\begin{lem}[Appending a translation]\label{lem:appendtranslation}
    Let $\vec{X}(t)=g(t)\vec{X}_0R(t)$ be a scaling-rotation solution of \eqref{eq:sdhcs} as in Proposition \ref{prop:selfsimexistence}. Then $\vec{X}(t)+\boldsymbol{1}\otimes h(t)$ with the correspondingly translated boundary condition is also a self-similar solution if and only if
    \[h''(t)+\beta h'(t)=0.\]
    With $h(0)=0$ the non-trivial solutions are
    \[h(t)=c(1-e^{-\beta t})\text{ for }\beta>0\text{ and }h(t)=ct\text{ for }\beta=0.\]
\end{lem}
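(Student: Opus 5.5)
The argument is a direct substitution, in the style of the proof of Proposition \ref{prop:selfsimconstant}, followed by solving a scalar linear ODE. Set $\vec{Y}(t)=\vec{X}(t)+\boldsymbol{1}\otimes h(t)$. Its interior block is $\vec{U}(t)+\boldsymbol{1}\otimes h(t)$, where $\vec{U}(t)=g(t)\vec{U}_0R(t)$. The translated boundary data are $f_1(t)=g(t)X_1(0)R(t)+h(t)$ and $f_n(t)=g(t)X_n(0)R(t)+h(t)$, so the forcing is
\[\vec{f}(t)=g(t)\alpha R(t)+h(t)(e_1+e_n).\]

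First compute the right hand side of \eqref{eq:sdhcs}. By the row-sum structure of $A$ noted before, $A(\boldsymbol{1}\otimes h)=-h(e_1+e_n)$, and this cancels the $h$ part of the forcing. Hence
\[A(\vec{U}+\boldsymbol{1}\otimes h)+\vec{f}=g(A\vec{U}_0+\alpha)R.\]
The left hand side is linear, so
\[\ddot{\vec{U}}+\beta\dot{\vec{U}}+\boldsymbol{1}\otimes(h''+\beta h').\]
By hypothesis $\vec{X}(t)$ already solves \eqref{eq:sdhcs} with its untranslated boundary data. Proposition \ref{prop:selfsimexistence} then gives $\ddot{\vec{U}}+\beta\dot{\vec{U}}=g(A\vec{U}_0+\alpha)R$. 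Subtracting, $\vec{Y}$ solves \eqref{eq:sdhcs} if and only if $\boldsymbol{1}\otimes(h''(t)+\beta h'(t))=0$ for all $t$. Since every row of this matrix equals $h''+\beta h'$ and $n-2\geqslant 2$, this holds exactly when $h''+\beta h'=0$, which proves both directions of the equivalence.

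Next solve the ODE. It is linear and constant-coefficient in each coordinate of $h\in\mathbb{R}^2$, with characteristic roots $0$ and $-\beta$. For $\beta>0$ the general solution is $h=a+be^{-\beta t}$, and imposing $h(0)=0$ gives $h=c(1-e^{-\beta t})$ with $c\in\mathbb{R}^2$. For $\beta=0$ there is a double root at $0$, so $h=a+bt$, and $h(0)=0$ gives $h=ct$. Nontriviality corresponds to $c\neq0$.

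The main point needing care is that the equivalence uses only the difference between the two equations, which is why the hypothesis that $\vec{X}$ is already a solution is essential. I would also check that $\vec{Y}$ still has the form \eqref{eq:selfsimilardef} with the same $g$ and $R$, the same $\vec{X}_0$, and translation $h$ satisfying $h(0)=0$, so that it is self-similar in the sense of the definition.
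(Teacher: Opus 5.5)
Your proposal is correct and follows essentially the same route as the paper. You substitute the translated curve, use the row sums of $A$ to cancel the $h(e_1+e_n)$ boundary contribution, subtract the equation already satisfied by $\vec{U}$ to reduce to $\boldsymbol{1}\otimes(h''+\beta h')=0$, and solve the constant-coefficient ODE with $h(0)=0$. Your explicit check that the translated family keeps the self-similar form with the same $g$, $R$ and $\vec{X}_0$ is a small point the paper leaves implicit.
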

\begin{proof}
Write $\vec{f}(t)=f_1(t)e_1+f_n(t)e_n=g(t)\alpha R(t)$ for the boundary forcing of the scaling-rotation, so that
\[\ddot{\vec{U}}+\beta\dot{\vec{U}}=A\vec{U}+\vec{f}(t).\]
The translated curve $\vec{X}(t)+\boldsymbol{1}\otimes h(t)$ has interior $\vec{U}(t)+\boldsymbol{1}\otimes h(t)$ and boundary trajectories $f_1(t)+h(t)$ and $f_n(t)+h(t)$, hence boundary forcing $\vec{f}(t)+h(t)(e_1+e_n)$. Substituting into \eqref{eq:sdhcs}, the left hand side is
\[\frac{\mathrm{d}^2}{\mathrm{d}t^2}\left(\vec{U}(t)+\boldsymbol{1}\otimes h(t)\right) + \beta\frac{\mathrm{d}}{\mathrm{d}t}\left(\vec{U}(t)+\boldsymbol{1}\otimes h(t)\right)=\ddot{\vec{U}}(t)+\beta\dot{\vec{U}}+\boldsymbol{1}\otimes(h''(t)+\beta h'(t)),\]
while, since the interior rows sums of $A$ vanish and the first and last equal $-1$, the right hand side is
\[A(\vec{U}+\boldsymbol{1}\otimes h(t))+\vec{f}(t)+h(t)(e_1+e_n)=A\vec{U}-h(t)(e_1+e_n)+\vec{f}(t)+h(t)(e_1+e_n)=A\vec{U}+\vec{f}(t).\]
Subtracting the equation satisfied by $\vec{U}$, the translated curve solves \eqref{eq:sdhcs} if and only if
\[\boldsymbol{1}\otimes(h''(t)+\beta h'(t))=0 \implies h''(t)+\beta h'(t)=0.\]
The characteristic roots of this equation are $0$ and $-\beta$, so
\[h(t)=c_0+c_1e^{-\beta t}\text{ for }\beta>0\text{ and }h(t)=c_0+c_1t\text{ for }\beta=0,\]
imposing $h(0)=0$ provides $c_1=-c_0$ in the $\beta>0$ case, resulting in
\[h(t)=c\left(1-e^{-\beta t}\right)\]
and in the $\beta=0$ case the initial condition gives $c=c_0$, so
\[h(t)=c\, t.\]
\end{proof}

\begin{prop}[Pure translation]\label{prop:puretranslation}
    Let $\beta>0$. Given any vector $v\in\mathbb{R}^2$ and endpoints $X_1(0),X_n(0)\in\mathbb{R}^2$, there is a unique solution of \eqref{eq:sdhcs} evolving by pure translation from rest, i.e.  $\vec{X}(t)=\vec{X}_0+\boldsymbol{1}\otimes h(t)$ with $h(0)=0,h'(0)=0$. The interior profile is the unique solution of
    \[A\vec{U}_0=\boldsymbol{1}\otimes v-\alpha,\]
    and the translation is determined by
    \[h(t)=\frac{v}{\beta}t+\frac{v}{\beta^2}(e^{-\beta t}-1)\]
    in the damped case and
    \[h(t)=\frac{1}{2}v\, t^2\]
    in the undamped case.
\end{prop}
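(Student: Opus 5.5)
Substitute the ansatz $\vec{X}(t)=\vec{X}_0+\boldsymbol{1}\otimes h(t)$ into \eqref{eq:sdhcs}, with translated boundary data $f_1(t)=X_1(0)+h(t)$ and $f_n(t)=X_n(0)+h(t)$, exactly as in the proof of Lemma \ref{lem:appendtranslation}. The left hand side is
\[\ddot{\vec{U}}+\beta\dot{\vec{U}}=\boldsymbol{1}\otimes\bigl(h''(t)+\beta h'(t)\bigr).\]
For the right hand side, the row-sum structure of $A$ gives $A(\boldsymbol{1}\otimes h(t))=-h(t)(e_1+e_n)$. This cancels the $h$ contribution of the boundary forcing and leaves
\[A\vec{U}(t)+\vec{f}(t)=A\vec{U}_0+\alpha.\]
So the ansatz solves the flow if and only if
\[\boldsymbol{1}\otimes\bigl(h''(t)+\beta h'(t)\bigr)=A\vec{U}_0+\alpha\quad\text{for all }t.\]
The right hand side is independent of $t$, and every row of the left hand side is the same vector. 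Hence $h''+\beta h'\equiv v$ for a constant $v\in\mathbb{R}^2$, and $A\vec{U}_0=\boldsymbol{1}\otimes v-\alpha$.

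Conversely, given $v$, I would define $\vec{U}_0$ by this linear system. It has a unique solution because $A$ is invertible: by Lemma \ref{lem:etri} its eigenvalues $-2+2\cos(k\pi/(m+1))$ are strictly negative. Explicitly, $\vec{U}_0=A^{-1}(\boldsymbol{1}\otimes v-\alpha)$, with $A^{-1}$ given by \eqref{eq:greens_function}, and this can be recorded if desired.

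It remains to solve the scalar (vector-valued) ODE
\[h''+\beta h'=v,\qquad h(0)=0,\quad h'(0)=0.\]
Its general solution is $c_0+c_1e^{-\beta t}+\frac{v}{\beta}t$ for $\beta>0$. The condition $h'(0)=0$ gives $c_1=v/\beta^2$, and $h(0)=0$ gives $c_0=-v/\beta^2$. This yields
\[h(t)=\frac{v}{\beta}t+\frac{v}{\beta^2}\bigl(e^{-\beta t}-1\bigr).\]
For $\beta=0$ direct integration gives $h(t)=\tfrac12 vt^2$. Alternatively, this is the limit of the damped formula as $\beta\to0$, obtained by expanding $e^{-\beta t}$ to second order.

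Uniqueness follows by combining the two steps. The constant $v$ is forced by the equation and the profile is fixed by invertibility of $A$. The translation is fixed by the initial value problem, which has unique solutions by standard linear ODE theory. The only mildly delicate point is the statement's hypothesis $\beta>0$ alongside the undamped formula. I would treat $\beta=0$ as a separate remark, noting that the argument is identical except for the solution of the ODE.
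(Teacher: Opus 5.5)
Your proposal is correct and follows essentially the same route as the paper. The paper substitutes $X_i(t)=X_i(0)+h(t)$ vertex by vertex rather than using the row sums of $A$, but the steps are the same: the $h$ terms cancel, so $h''+\beta h'$ must be a constant $v$ with $A\vec{U}_0=\boldsymbol{1}\otimes v-\alpha$; invertibility of $A$ gives a unique profile; and the initial value problem for $h$ is solved directly. Your justification of invertibility via Lemma~\ref{lem:etri}, your explicit computation of the constants, and your remark on the $\beta=0$ case are slightly more careful than the paper's version.
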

\begin{proof}
    A translating piecewise linear curve has the form $\vec{X}(t)=\vec{X}_0+\boldsymbol{1}\otimes h(t)$ where $\boldsymbol{1}\otimes h(t)$ is the $n\times p$ matrix with every row equal to some time-dependent vector $h(t)\in\mathbb{R}^p$ satisfying $h(0)=0$. In particular, every point, including the boundary translates by the same $h(t)$, so $X_1(t) = X_0 + h(t)$ and $X_n(t)=X_n + h(t)$.

	We substitute directly into the pointwise evolution equation, for each interior vertex $i=2,3,\dots,n-1$
	\[\ddot{X}_i + \beta\dot{X}_i=X_{i-1}-2X_i + X_{i+1}.\]
	With $X_i(t)=X_i(0)+h(t)$ for all $i$, the left hand side becomes $\ddot{h}+\beta\dot{h}$. The right hand side becomes
	\[\left(X_{i-1}(0)+h\right) - 2\left(X_i(0)+h\right) + \left(X_{i+1}(0)+h\right) = X_{i-1}(0)-2X_i(0)+X_{i+1}(0), \]
	since $h(t)$ terms cancel exactly. Hence
	\[\ddot{h} + \beta\dot{h}=X_{i-1}(0)-2X_i(0)+X_{i+1}(0)\text{ for all }i=2,3,\dots,n-1.\]
	For this to yield the same vector $v$ for all interior vertices $i$ we require,
	\[X_{i-1}(0)-2X_i(0)+X_{i+1}(0)=v\text{ for }i=2,3\dots,n-1.\]
	In matrix form this is precisely 
	\begin{equation}\label{eq:puretrans}
		A\vec{U}_0 + X_1(0)e_1 + X_n(0)e_n = \boldsymbol{1}\otimes v.
	\end{equation}
	Since $A$ is invertible, given any $v,X_1(0)$ and $X_n(0)$ \eqref{eq:puretrans} has a unique solution.

	The translation $h(t)$ then satisfies $\ddot{h}+\beta\dot{h}=v$ with $h(0)=0$ and $\dot{h}(0)=0$ The general solution is
	\[h(t) = \frac{v}{\beta}t+\frac{v}{\beta^2}(e^{-\beta t}-1)\]
	by standard linear ordinary differential equation techniques. In the case of $\beta=0$, integrating $h''(t)=v$ twice subject to $h(0)=0$ and $h'(0)=0$ gives 
    \[h(t)=\frac{1}{2}v\, t^2.\]
\end{proof}

\begin{thm}[Classification of self-similar solutions]\label{thm:selfsimilar}
Let $p=2$ and $\vec{X}(t)$ be a self-similar solution of \eqref{eq:sdhcs} with self-similar boundary conditions and non-degenerate initial curve. Then $\vec{X}$ belongs to exactly one of the following classes.
\begin{enumerate}[label=(\roman*)]
    \item Pure scaling-rotation ($h\equiv0$). The configuration $\vec{U}$ is determined by the Sylvester equation \eqref{eq:sylvester} and classified by Corollary \ref{cor:classification}.
    \item Pure translation ($g\equiv1,R\equiv I$). The profile is determined by Proposition \ref{prop:puretranslation}.
    \item Scaling-rotation with translation. The configuration is determined by the Sylvester equation \eqref{eq:sylvester} as in (i) and the translation is determined by Lemma \ref{lem:appendtranslation}.
    \item Static equilibrium ($g\equiv1,R\equiv I,h\equiv 0$). The profile $\vec{U}_0=-A^{-1}\alpha$ is the straight line interpolant between the boundary points. This is the trivial self-similar solution.
\end{enumerate}
These four classes are mutually exclusive and exhaustive.
\end{thm}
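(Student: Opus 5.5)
The plan is to use Proposition \ref{prop:selfsimconstant} to reduce any self-similar solution to the constant matrix $M_0=p_0I+q_0J$. We then split according to whether the scaling-rotation part $(g,R)$ is trivial, and whether the translation $h$ is trivial. This gives a two-by-two case split. Every self-similar solution falls into exactly one cell, and we identify each cell with one of the classes (i)--(iv). Mutual exclusivity is then immediate, because the cells are defined by disjoint conditions: $(g,R)\not\equiv(1,I)$ or not, and $h\not\equiv0$ or not. So the substance of the proof is exhaustiveness together with the description of each class.

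First, take the case $h\not\equiv0$ and $(g,R)\not\equiv(1,I)$. Return to \eqref{eq:selfsimeq1}. Since $M(t)\equiv M_0$, the row-difference argument already shows that $\vec U_0M_0-A\vec U_0-\alpha$ has all rows equal. Call this common row $w$, so that
\[
\frac{1}{g(t)}\big(h''(t)+\beta h'(t)\big)R^{-1}(t) = -w
\]
for all $t$.

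We need to show that $w=0$ in this mixed case. Evaluating at $t=0$ gives $w=-(h''(0)+\beta h'(0))$, so $h''+\beta h'=-g(t)\,w\,R(t)$. I would then argue as follows.
\begin{itemize}
\item If $w\neq0$, one may absorb the constant forcing by splitting $\vec U_0=\vec U_0^{(1)}+\vec U_0^{(2)}$. Here $\vec U_0^{(2)}$ solves the pure-translation equation of Proposition \ref{prop:puretranslation}.
\item Alternatively, argue directly that consistency forces $w=0$. This is the point where the exact normalisation of the theorem, namely that translations accompanying a scaling-rotation satisfy $h''+\beta h'=0$, must be taken from Lemma \ref{lem:appendtranslation}.
\end{itemize}
With $w=0$ we obtain the Sylvester equation \eqref{eq:sylvester} and $h''+\beta h'=0$, which is class (iii).

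Next, take the case $h\equiv0$ with $(g,R)$ non-trivial. Proposition \ref{prop:selfsimexistence} applies directly, giving class (i), with the solvability governed by Corollary \ref{cor:classification}.

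Now take the case $(g,R)\equiv(1,I)$ with $h\not\equiv0$. Then $M_0=0$ and \eqref{eq:selfsimeq1} becomes
\[
A\vec U_0+\alpha = \boldsymbol 1\otimes(h''+\beta h'),
\]
which is constant in time. Write $v$ for this constant. Then $h$ solves $h''+\beta h'=v$, and the profile is the one in Proposition \ref{prop:puretranslation}. This is class (ii). If the initial velocity is not zero, we add the homogeneous part from Lemma \ref{lem:appendtranslation}.

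Finally, take the case where everything is trivial. Then $A\vec U_0+\alpha=0$, so $\vec U_0=-A^{-1}\alpha$, which is class (iv). We verify that this is the linear interpolant by checking that the equally spaced points $X_i=X_1+\tfrac{i-1}{n-1}(X_n-X_1)$ have vanishing second differences, and then invoking the uniqueness given by invertibility of $A$ (Lemma \ref{lem:inverse}, \eqref{eq:greens_function}).

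The main obstacle is the mixed case. There we must show that a translation accompanying a genuine scaling-rotation cannot carry a nonzero constant $w$. Equivalently, we must show that $g(t)\,w\,R(t)$ can be a function of the form $h''+\beta h'$ only when $w=0$, or else that such $w$ can be absorbed into the profile. I would first try to make this precise by differentiating the identity $h''+\beta h'=-g\,w\,R$ and comparing it against the constant nature of $M_0$. If that fails, I would simply adopt the paper's convention that class (iii) means $h''+\beta h'=0$, as in Lemma \ref{lem:appendtranslation}, and note that the residual forcing is handled by superposition with Proposition \ref{prop:puretranslation}.
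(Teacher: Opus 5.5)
Your $2\times2$ case split is tidier than the paper's class-by-class check, and your cells (i), (ii) and (iv) are fine. The genuine gap is the one you flag in the mixed cell, and neither of your routes closes it.

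\textbf{Proving $w=0$ is impossible, because $w\neq0$ occurs.} Take any admissible $(g,R)$ and any row vector $w$. Let $\vec{U}_0$ solve $A\vec{U}_0-\vec{U}_0M_0=-\alpha-\boldsymbol{1}\otimes w$; this is uniquely solvable when $\sigma(A)\cap\sigma(M_0)=\emptyset$. Let $h$ solve $h''+\beta h'=-g\,w\,R$ with $h(0)=0$. The computation in Proposition~\ref{prop:selfsimconstant} then shows this is a self-similar solution with $h''+\beta h'\not\equiv0$.

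\textbf{The superposition route also fails.} In a self-similar solution the added profile $\vec{U}_0^{(2)}$ is itself scaled and rotated. So $\vec{U}_0^{(2)}M_0$ enters the algebraic equation and must have equal rows. For $M_0\neq0$ this forces $\vec{U}_0^{(2)}=\boldsymbol{1}\otimes z$, which is a rigid shift, not a translating profile. Besides, $-g\,w\,R$ is time dependent, while Proposition~\ref{prop:puretranslation} needs a constant $v$.

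\textbf{What works when $M_0\neq0$.} Then $M_0=p_0I+q_0J$ is invertible, and the missing idea is exactly that rigid shift: recentre, using the translation invariance $A(\boldsymbol{1}\otimes z)+z(e_1+e_n)=0$. Set $z=wM_0^{-1}$ and $\tilde h=gzR+h-z$. Then $\tilde h(0)=0$, $\tilde h''+\beta\tilde h'=g(zM_0-w)R=0$, and
\[\vec{X}(t)=\boldsymbol{1}\otimes z+g(t)\big(\vec{X}_0-\boldsymbol{1}\otimes z\big)R(t)+\boldsymbol{1}\otimes\tilde h(t).\]
Here $\vec{U}_0-\boldsymbol{1}\otimes z$ solves \eqref{eq:sylvester} with $\alpha-z(e_1+e_n)$ in place of $\alpha$. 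So (i) and (iii) hold only up to the choice of centre. Already a pure scaling about a point $z\neq0$ has $h=(1-g)z$ and $h''+\beta h'=-p_0gz\neq0$.

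\textbf{When $M_0=0$ nothing absorbs $w$.} Suppose $M_0=0$ but $(g,R)\not\equiv(1,I)$. Take $g(t)R(t)=I+tJ$ if $\beta=0$, or $I+(1-e^{-\beta t})J$ if $\beta>0$. Take $A\vec{U}_0+\alpha=\boldsymbol{1}\otimes v$ with $v\neq0$ and $X_1(0)\neq X_n(0)$. Take $h''+\beta h'=v\,g\,R$ with $h(0)=0$. This solves \eqref{eq:sdhcs}. Moreover, $(g,R,h)$ is uniquely determined by $\vec{X}$ through $X_i(t)-X_j(t)=(X_i(0)-X_j(0))g(t)R(t)$. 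Your exclusivity claim also needs this uniqueness, and you did not state it. Hence this solution lies in none of the four classes.

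So exhaustiveness cannot be proved as stated. You need recentring, together with either the hypothesis $M_0\neq0$ in the mixed case or an enlarged class (iii). The paper's own proof does not supply this step either. For (iii) it just invokes Lemma~\ref{lem:appendtranslation}, which presupposes that $\vec{U}_0$ already solves \eqref{eq:sylvester}; that is, it tacitly sets $w=0$. That is your fallback convention, which restricts the statement rather than proving it.
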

\begin{proof}
   For class (i), $h\equiv0$ so
   \[\vec{U}_0M_0=A\vec{U}_0+\alpha\]
   thus the configuration satisfies the Sylvester equation \eqref{eq:sylvester} and can be classified by Corollary \ref{cor:classification}. For class (iv) we also have $g\equiv 1$ and $R\equiv I$ giving $M_0=0$ so \eqref{eq:sylvester} becomes
   \[A\vec{U}_0=-\alpha.\]
   For class (ii) we have $g\equiv 1$ and $R\equiv I$ giving
   \[\boldsymbol{1}\otimes(h''(t)+\beta h'(t))=A\vec{U}_0+\alpha\]
   which is exactly Proposition \ref{prop:puretranslation}. For class (iii) the scaling-rotation is determined as in class (i) and by Lemma \ref{lem:appendtranslation} a compatible translation can be appended.
\end{proof}

Figure \ref{fig:selfsimilar} shows a selection of self-similar solutions constructed via Theorem \ref{thm:selfsimilar}.
\begin{figure}[htb]
    \centering
    \begin{subfigure}[t]{0.32\textwidth}
        \begin{minipage}[t][4.2cm][t]{\linewidth}
            \centering
            \includegraphics[width=\linewidth,height=4.2cm,keepaspectratio]{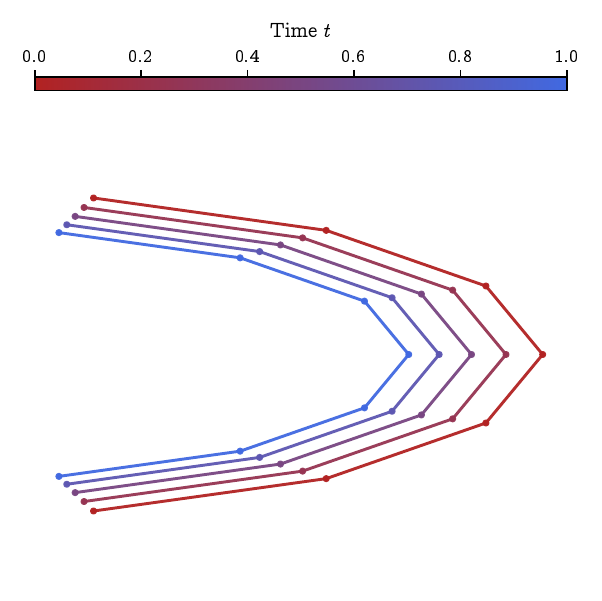}
        \end{minipage}
        \caption{Scaling.}
    \end{subfigure}
    \hfill
    \begin{subfigure}[t]{0.32\textwidth}
        \begin{minipage}[t][4.2cm][t]{\linewidth}
            \centering
            \includegraphics[width=\linewidth,height=4.2cm,keepaspectratio]{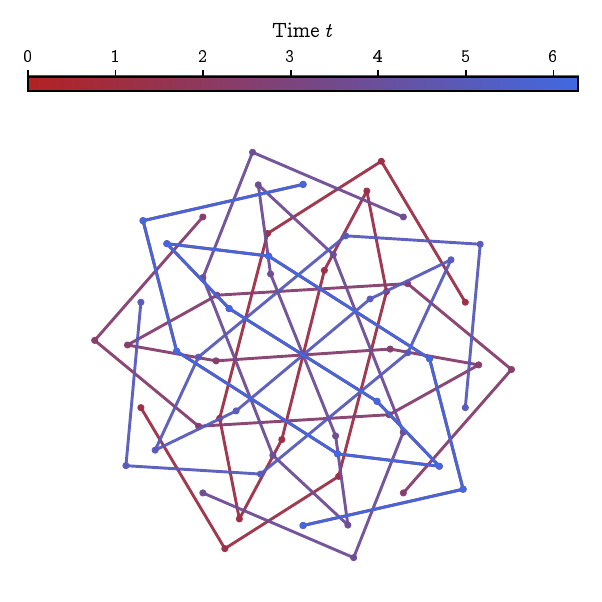}
        \end{minipage}
        \caption{Rotating.}
    \end{subfigure}
    \hfill
    \begin{subfigure}[t]{0.32\textwidth}
        \begin{minipage}[t][4.2cm][t]{\linewidth}
            \centering
            \includegraphics[width=\linewidth,height=4.2cm,keepaspectratio]{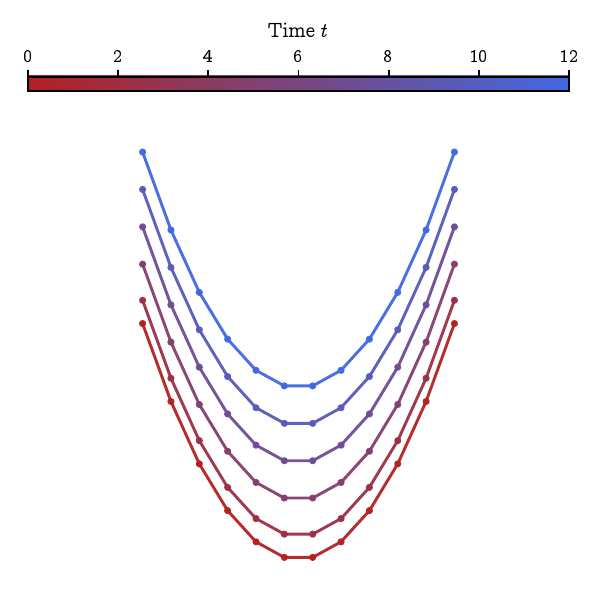}
        \end{minipage}
        \caption{Translating.}
    \end{subfigure}

    \begin{subfigure}[t]{0.32\textwidth}
        \begin{minipage}[t][4.2cm][t]{\linewidth}
            \vspace{0pt}
            \centering
            \includegraphics[width=\linewidth,height=4.2cm,keepaspectratio]{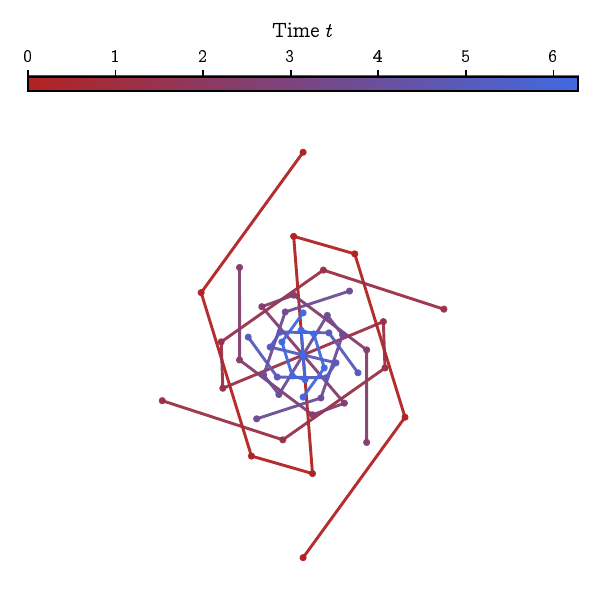}
        \end{minipage}
        \caption{Scaling and rotating.}
    \end{subfigure}\hfill
    \begin{subfigure}[t]{0.32\textwidth}
        \begin{minipage}[t][4.2cm][t]{\linewidth}
            \vspace{3.8pt}
            \centering
            \includegraphics[width=\linewidth,height=4.2cm,keepaspectratio]{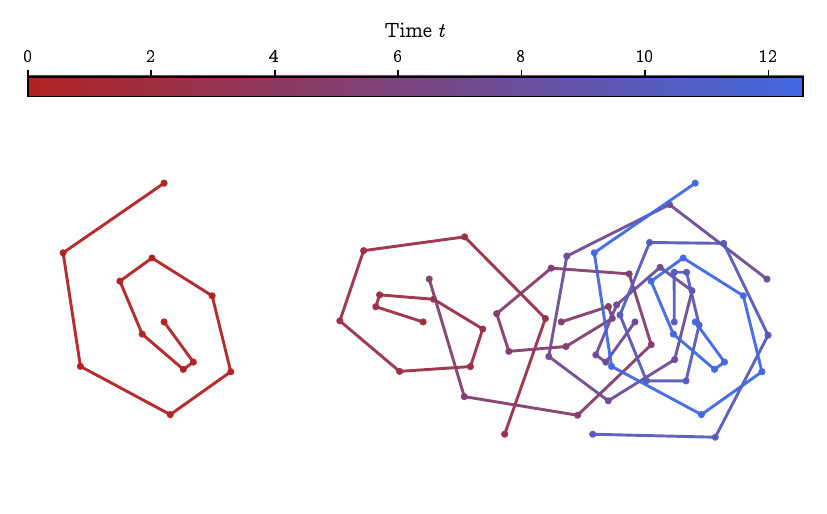}
        \end{minipage}
        \caption{Rotating and translation.}
    \end{subfigure}\hfill
    \begin{subfigure}[t]{0.32\textwidth}
        \begin{minipage}[t][4.2cm][t]{\linewidth}%
            \vspace{3.8pt}
            \centering
            \includegraphics[width=\linewidth,height=4.2cm,keepaspectratio]{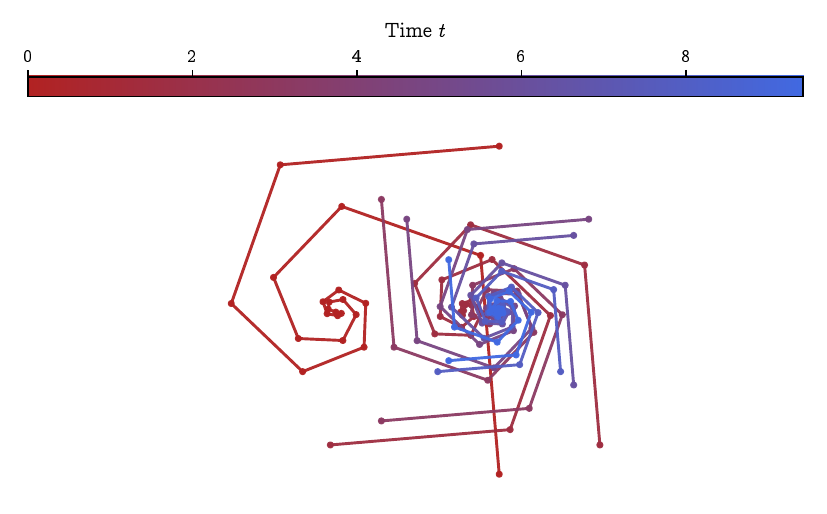}
        \end{minipage}
        \caption{Scaling, rotating and translating.}
    \end{subfigure}

    \caption{A selection of self-similar solutions. Time steps are shown
    superimposed where the colour change from red to blue indicates the
    progression of time.}
    \label{fig:selfsimilar}
\end{figure}

%
\subsection{Time periodic solutions}\label{sec:timeperiodic}
Similar to self-similar solutions, time periodic solutions occupy an important place in the theory of evolutionary equations. This is because they are non-trivial recurrent solutions which, depending on the systems properties, may either persist indefinitely under conservative dynamics or arise only as a forced response under dissipative dynamics. For the closed polygon analogues of \eqref{eq:sdhcs} studied by the first author and Meyer in \cite{JM25_1}, time periodic solutions arise naturally in the undamped setting. In their notation, the case $\beta=0$ admits a family of solutions of the form
\[\vec{X}(t)=(\cos t)\vec{Y}+(\sin t)\vec{Z}\]
which they identify as \emph{breathers}: periodic solutions that oscillate between two polygonal states. The single frequency structure is a consequence of the scalar operator $-I$, where every mode \emph{sees} the same eigenvalue. For the polyharmonic cases $m\geqslant1$ in \cite{JM25_1}, the operator $(-1)^{m+1}M^m$ has distinct eigenvalues $\lambda_{m,k}$ and each eigenmode oscillates at its own frequency.

For the open piecewise linear curves studied here the boundary \emph{kills} the breather phenomenon present in the closed case. The only routes to time periodic behaviour for the open flow are therefore (i) to set $\beta=0$, in which case the standing wave modes of the homogeneous equation oscillate at the natural frequencies $\omega_k=\sqrt{-\lambda_k}$, or (ii) to drive the flow with periodic boundary forcing, in which case the dissipation cannot remove the periodic input and a unique periodic orbit is generated. The corresponding two cases are the subject of Propositions \ref{prop:tperiodicundamped} and \ref{prop:tperiodicdamped} below.
\subsubsection{The Undamped Case $(\beta=0)$}
We begin with the undamped case $\beta=0$, which is the closest in spirit to the breather solutions of \cite{JM25_1}. Here every homogeneous solution decomposes into a superposition of standing waves, each oscillating at the frequency determined by the corresponding eigenvalue of $A$.
\begin{prop}[Undamped standing waves]\label{prop:tperiodicundamped}
	Consider the semi-discrete hyperbolic flow \eqref{eq:sdhcs} with $\beta = 0$ and fixed zero boundary conditions. Let $\{(\lambda_k, \phi_k)\}_{k=1}^{n-2}$ be the eigenpairs of the matrix $A$. The general solution $\vec{U}(t)$ for the interior vertices $i=2, \dots, n-1$ is given by:
\begin{equation}
	U_i(t) = \sum_{k=1}^{n-2} \phi_{k,i} \left( C_k \cos(\omega_k t) + D_k \sin(\omega_k t) \right),
\end{equation}
	where $\omega_k=\sqrt{-\lambda_k}$ are the natural frequencies, $\phi_{k,i}$ denotes the $i$-th component of the $k$-th eigenvector and $C_k,D_k\in\mathbb{R}^p$ are constant vectors determined by the initial configuration $\vec{U}(0)$ and $\mathrm{d}\vec{U}/\mathrm{d}t|_{t=0}$.
\end{prop}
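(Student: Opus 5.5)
With $\beta=0$ and $f_1\equiv f_n\equiv 0$, the flow \eqref{eq:sdhcs} reduces to the homogeneous constant-coefficient system $\ddot{\vec{U}}=A\vec{U}$. The plan is to diagonalise $A$ using Lemma \ref{lem:etri}, solve one decoupled scalar oscillator per mode, and then fix the constants from the initial data by orthogonality. Because the system acts coordinatewise, each of the $p$ coordinates evolves independently. We can therefore work with $\vec{U}$ as an $(n-2)\times p$ matrix and let the coefficients $C_k,D_k$ be row vectors in $\mathbb{R}^p$.

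\textbf{Step 1: spectral setup.} Apply Lemma \ref{lem:etri} with $c=b=1$, $a=-2$ and $m=n-2$. This gives
\[\lambda_k=-2+2\cos\left(\frac{k\pi}{n-1}\right)=-4\sin^2\left(\frac{k\pi}{2(n-1)}\right)<0,\qquad (\phi_k)_j=\sin\left(\frac{jk\pi}{n-1}\right),\]
for $k=1,\dots,n-2$. The $\lambda_k$ are strictly negative and pairwise distinct, so each $\omega_k=\sqrt{-\lambda_k}$ is real and positive. Since $A$ is symmetric with distinct eigenvalues, the $\phi_k$ form an orthogonal basis of $\mathbb{R}^{n-2}$.

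\textbf{Step 2: decoupling.} Expand the solution as $\vec{U}(t)=\sum_k \phi_k\, a_k(t)$ with $a_k(t)\in\mathbb{R}^p$. One way to obtain the coefficients is $a_k=\phi_k^\top\vec{U}/|\phi_k|^2$. Substituting into $\ddot{\vec{U}}=A\vec{U}$ and using orthogonality gives the scalar equations $\ddot a_k=\lambda_k a_k=-\omega_k^2 a_k$. Their general solution is $a_k(t)=C_k\cos(\omega_k t)+D_k\sin(\omega_k t)$, which yields the stated formula componentwise.

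\textbf{Step 3: constants and generality.} Evaluating at $t=0$ and using orthogonality once more determines the constants:
\[C_k=\frac{\phi_k^\top\vec{U}(0)}{|\phi_k|^2},\qquad D_k=\frac{\phi_k^\top\dot{\vec{U}}(0)}{\omega_k|\phi_k|^2}.\]
To see that this is the \emph{general} solution, invoke uniqueness for the linear ODE, which is equivalent to the first order system \eqref{eq:sdhcs_firstorder}. Every initial pair $(\vec{U}(0),\dot{\vec{U}}(0))$ is realised by some choice of $C_k,D_k$, so every solution has the stated form. As a cross-check, Proposition \ref{prop:esystem} gives $M$ the eigenvalues $\pm i\omega_k$ when $\beta=0$; these are distinct, so $M$ is diagonalisable and no secular terms of the form $t\cos$ appear.

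The only point needing care is that every $\omega_k$ is nonzero. This is what makes $D_k$ well defined and rules out polynomial-in-$t$ modes. It follows from the invertibility of $A$ (compare \eqref{eq:greens_function}), or equivalently from $\sin(k\pi/(2(n-1)))\neq0$ for $1\le k\le n-2$. Everything else is routine.
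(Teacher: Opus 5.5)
Your proposal is correct and follows essentially the same route as the paper: diagonalise the symmetric negative definite matrix $A$, reduce $\ddot{\vec{U}}=A\vec{U}$ to the decoupled oscillators $\ddot a_k=-\omega_k^2a_k$, and read off $C_k,D_k$ from the initial data. Your explicit factor $|\phi_k|^2$ in the constants is a small improvement over the paper. The paper's formulas $C_k=\phi_k^\top\vec{U}(0)$ and $D_k=\phi_k^\top\dot{\vec{U}}(0)/\omega_k$ tacitly assume orthonormal eigenvectors, whereas the eigenvectors of Lemma \ref{lem:etri} have $|\phi_k|^2=(n-1)/2$.
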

\begin{proof}
With $\beta=0$ and fixed boundaries the interior system becomes
	\[\ddot{\vec{U}}=A\vec{U}.\]
	Since $A$ is symmetric negative definite let $P$ diagonalise it such that
	\[A=P\Lambda P^{-1}.\]
	Setting $\vec{U}=Pa(t)$ we obtain
	\[\ddot{a}_k=\lambda_ka_k\text{ for each }k.\]
	Since $\lambda_k<0$ we set $\omega_k=\sqrt{-\lambda_k}$ so that
	\[\ddot{a}_k=-\omega_k^2a_k\implies a_k(t)=C_k\cos(\omega_kt)+D_k\sin(\omega_kt).\]
	The constants are then determined by the initial data such that
	\[C_k=\phi_k^\top\vec{U}(0)\text{ and }D_k=\frac{1}{\omega_k}\phi_k^\top\dot{\vec{U}}(0).\]
\end{proof}

Figure \ref{fig:undamped_time_periodic} shows an example standing wave solution of Proposition \ref{prop:tperiodicundamped}.

\begin{figure}[htb]
	\centering
	\includegraphics[width=0.4\textwidth]{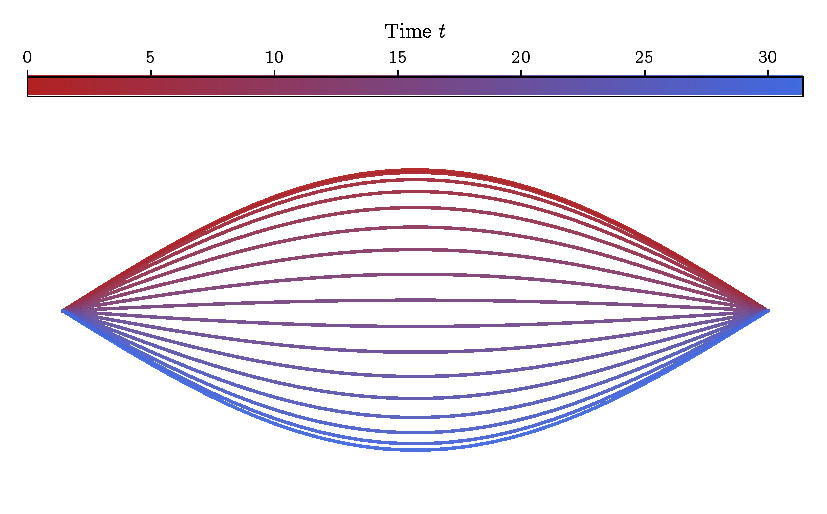}
	\caption[Example of an undamped time periodic solution.]{Example of an undamped time periodic solution. Time stamps are shown superimposed where the colour change from red to blue indicates the progression of time $t$.}
	\label{fig:undamped_time_periodic}
\end{figure}

\subsubsection{The Damped Case $(\beta>0)$}
When $\beta>0$, Proposition \ref{prop:discreteenergy} guarantees that all homogeneous solution decay to zero, thus no intrinsic breathers exist. The closed polygon analogue of this observation appears in \cite{JM25_1} as the asymptotic breathing phenomenon. The rescaled solutions $e^{\beta t/2}\vec{X}(t)$ oscillate between two states even though $\vec{X}(t)$ itself decays to a point. For the open curve the situation is stricter, since the absence of the zero eigenvalue means there is no rescaling under which periodic behaviour reappears; in the absence of forcing, solutions truly damp out without a residual oscillating component.

Periodic behaviour can however be sustained if the system is driven by periodic boundary conditions, an option unavailable to the closed polygons as they have no boundary. 
\begin{prop}[Forced periodic]\label{prop:tperiodicdamped}
	Consider the semi-discrete hyperbolic flow \eqref{eq:sdhcs} with damping $\beta>0$. Suppose the boundary forcing functions are $T$-periodic, that is $f_1(t+T)=f_1(t)$ and $f_n(t+T)=f_n(t)$. Then:
	\begin{enumerate}[label=(\roman*)]
		\item There exists a unique $T$-periodic solution $\vec{U}^*(t)$.
		\item This solution is globally asymptotically stable. For any other solution $\vec{U}(t)$,
			\[\lim_{t\to\infty}||\vec{U}(t)-\vec{U}^*(t)||=0.\]
	\end{enumerate}
\end{prop}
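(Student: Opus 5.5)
Rewrite \eqref{eq:sdhcs} in the first order form of Remark \ref{rem:sdhcs_firstorder}, $\dot{\vec Y}=M\vec Y+\vec F(t)$ with $\vec Y=(\vec U,\vec V)^\top$ and $M=\begin{bmatrix}0&I\\A&-\beta I\end{bmatrix}$. The argument is coordinatewise in $\mathbb{R}^p$, so it suffices to treat $p=1$. The key structural fact is that $M$ is Hurwitz when $\beta>0$. By Proposition \ref{prop:esystem} its eigenvalues are $\lambda=\frac{-\beta\pm\sqrt{\beta^2+4\mu_k}}{2}$, where $\mu_k=-2+2\cos(k\pi/(m+1))\in(-4,0)$.

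The sign check is short. If $\beta^2+4\mu_k<0$, then $\mathrm{Re}\,\lambda=-\beta/2<0$. Otherwise $0\leqslant\sqrt{\beta^2+4\mu_k}<\beta$ because $\mu_k<0$, so both roots are real and negative. The repeated root case $\beta^2+4\mu_k=0$ gives a Jordan block, but the eigenvalue is still $-\beta/2<0$, so this does not affect the argument. Hence there are constants $C,\delta>0$ with $\|e^{Mt}\|\leqslant Ce^{-\delta t}$ for $t\geqslant0$.

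\textbf{Step (i), existence and uniqueness.} By variation of parameters, the solution with data $\vec Y(0)=\vec Y_0$ is
\[\vec Y(t)=e^{Mt}\vec Y_0+\int_0^te^{M(t-s)}\vec F(s)\,\mathrm{d}s.\]
It is $T$-periodic if and only if $\vec Y(T)=\vec Y(0)$, that is,
\[(I-e^{MT})\vec Y_0=\int_0^Te^{M(T-s)}\vec F(s)\,\mathrm{d}s.\]
The eigenvalues of $e^{MT}$ are $e^{\lambda T}$ with $|e^{\lambda T}|<1$, so $I-e^{MT}$ is invertible and $\vec Y_0$ is uniquely determined.

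It remains to confirm that this solution is periodic for all time. Both $t\mapsto\vec Y(t+T)$ and $\vec Y(t)$ solve the same ODE, because $\vec F$ is $T$-periodic, and they share initial data. Uniqueness of solutions of the initial value problem then gives $\vec Y(t+T)=\vec Y(t)$. The resulting $\vec U^*$ is the first block of $\vec Y$.

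Uniqueness among periodic solutions follows from the stability estimate below. Alternatively, the difference of two periodic solutions is a periodic solution of the homogeneous system, which decays, so it must vanish. One could equally write $\vec Y^*$ as the convolution $\int_{-\infty}^te^{M(t-s)}\vec F(s)\,\mathrm{d}s$, which converges thanks to the exponential bound.

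\textbf{Step (ii), global asymptotic stability.} Let $\vec W=\vec Y-\vec Y^*$. Then $\vec W$ solves the homogeneous system $\dot{\vec W}=M\vec W$, so
\[\|\vec W(t)\|\leqslant Ce^{-\delta t}\|\vec W(0)\|\to0.\]
In particular $\|\vec U(t)-\vec U^*(t)\|\to0$.

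As an independent check, one can argue via energy instead. With zero boundary data, Proposition \ref{prop:discreteenergy} shows that $E(\vec W)$ is nonincreasing. LaSalle's principle then forces $\dot{\vec W}\equiv0$ on the limit set, hence $A\vec W=0$, and so $\vec W=0$ since $A$ is invertible. The spectral route is simpler and also gives an exponential rate.

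The main obstacle is making the Hurwitz claim airtight uniformly across the underdamped, overdamped and critically damped modes. In particular, the critically damped case $\beta^2=-4\mu_k$ makes $M$ non-diagonalisable, so Proposition \ref{prop:esystem} does not supply a full eigenbasis. I would handle this by invoking only the spectral bound $\max\mathrm{Re}\,\lambda<0$ together with the standard estimate $\|e^{Mt}\|\leqslant C(1+t)^{d}e^{-\beta' t}$, which holds for some $0<\beta'$ no larger than the spectral gap and $d$ at most the size of the largest Jordan block. This avoids any reliance on diagonalisation.
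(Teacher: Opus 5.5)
Your proposal is correct and follows the paper's route: first-order reformulation, variation of parameters, invertibility of $I-e^{MT}$ because every eigenvalue of $M$ has negative real part, and exponential decay of the difference of two solutions under the homogeneous system. Your extra checks fill in details the paper leaves implicit. These are that $\vec Y(T)=\vec Y(0)$ propagates to $\vec Y(t+T)=\vec Y(t)$ for all $t$, and that the critically damped, non-diagonalisable case $\beta^2+4\mu_k=0$ does not affect the estimate $\|e^{Mt}\|\leqslant Ce^{-\delta t}$.
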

\begin{proof} 
	\leavevmode
	\begin{enumerate}[label=(\roman*)]
		\item We re-write the system \eqref{eq:sdhcs} in the first-order form \eqref{eq:sdhcs_firstorder}. Let $\vec{W}=\left(\vec{U},\vec{V}\right)^\top$. The system becomes 
	\[\frac{\mathrm{d}}{\mathrm{d}t} \vec{W}=M\vec{W}+f_1(t)e_1 + f_n(t)e_n\]
	where
	\[M = 
	\begin{bmatrix}
		0 & I \\
		A & \mathrm{diag}(-\beta)
	\end{bmatrix}\]
	and both $f_1(t)$ and $f_n(t)$ are $T$-periodic. The solution is given by the variation of parameters formula (see for example \cite{PH02}),
	\[\vec{W}(t) = e^{Mt}\vec{W}(0)+\int_0^te^{M(t-s)}\left(f_1(s)e_1+f_n(s)e_n\right)\mathrm{d}s.\]
	For $\vec{W}(t)$ to be $T$-periodic, we require the periodicity condition $\vec{W}(T)=\vec{W}(0)$. Substituting $t=T$ and rearranging we find
	\[\left(I-e^{MT}\right)\vec{W}(0)=\int_0^Te^{M(T-s)}\left(f_1(s)e_1+f_n(s)e_n\right)\;\mathrm{d}s.\]
			From Proposition \ref{prop:esystem}, all eigenvalues of $M$ have strictly negative real parts. Thus the eigenvalues of the matrix exponential $e^{MT}$ have magnitude strictly less than 1. This implies the matrix $(I-e^{MT})$ is invertible. Thus there exists a unique initial vector $\vec{W}^*(0)$ that generates the periodic orbit.
		\item Let $\vec{W}(t)$ be any arbitrary solution and $\vec{W}^*(t)$ be the periodic solution. Define the error $\vec{E}(t)=\vec{W}(t) - \vec{W}^*(t)$. The error satisfies the homogeneous equation
			\[\frac{\mathrm{d}}{\mathrm{d}t}\vec{E}=M\vec{E}(t).\]
			Since $M$ has eigenvalues with strictly negative real part the errors decays exponentially to zero as $t\to\infty$. Thus the periodic solution acts as a global attractor.
	\end{enumerate}
\end{proof}

Figure \ref{fig:damped_time_periodic} illustrates both parts of Proposition \ref{prop:tperiodicdamped}. In panel (a) the initial data is chosen to lie on the periodic orbit $\vec{U}^*$, hence the solution stays on the orbit for all $t$. In panel (b) the initial curve is the straight line interpolant and the trajectory converges to the same orbit as predicted by the global asymptotic stability of (ii).

\begin{figure}[htb]
	\centering
	\begin{subfigure}[t]{0.48\textwidth}
		\includegraphics[width=\textwidth]{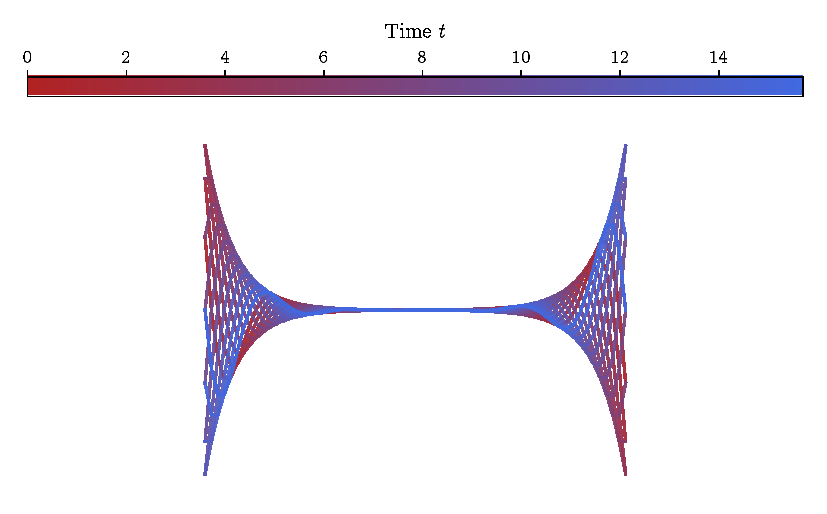}
		\caption{Evolution of time periodic solution where the initial curve is constructed as time periodic.}
	\end{subfigure}
	\hfill
	\begin{subfigure}[t]{0.48\textwidth}
		\includegraphics[width=\textwidth]{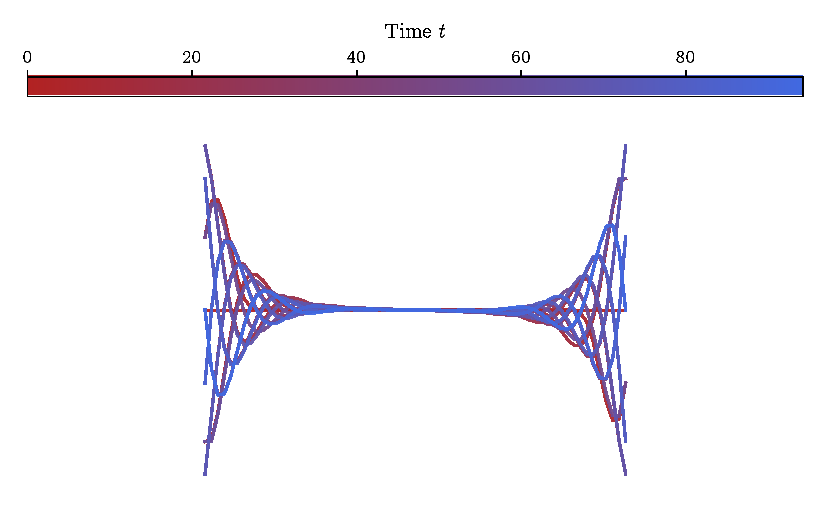}
		\caption{Evolution of time periodic solution where the initial curve is the straight line joining the boundary points.}
	\end{subfigure}
	\caption[Examples of time periodic solutions with damping.]{Examples of time periodic solutions with damping. Time steps are shown superimposed.}
	\label{fig:damped_time_periodic}
\end{figure}

\subsection{Solutions for general initial data}\label{sec:generalsolutions}
The previous two sections studied solutions to \eqref{eq:sdhcs} with some particular structure, being self-similar or time periodic solutions. We now turn to the general solution to \eqref{eq:sdhcs} for arbitrary initial data $\vec{X}_0=\vec{X}(0)$ and arbitrary time-dependent boundary forcing $f_1(t),f_n(t)$.The linearity of \eqref{eq:sdhcs} make this a routine application of the variation of parameters formula \emph{once} the spectral analysis of Section \ref{sec:mat} is in hand.
\begin{thm}[Solution for general initial data]\label{thm:generalinitialdata}
	Given an initial curve $\vec{X}_0=\vec{X}(0)$ with $n$ points in $\mathbb{R}^p$ with corresponding $\vec{U}_0 = \vec{U}(0)$ and end-point boundary conditions $X_1(t)=f_1(t)$ and $X_n(t)=f_n(t)$ \eqref{eq:sdhcs} has a unique solution given by
	\begin{multline}\label{eq:generalinitial}
		\begin{bmatrix}
			\vec{U}(t)\\
			\vec{V}(t)
		\end{bmatrix}
		=
		P\mathrm{diag}(e^{\lambda_1t},e^{\lambda_2t},\dots,e^{\lambda_{2n-4}t})P^{-1}
		\begin{bmatrix}
		\vec{U}_0\\
		\vec{V}_0
		\end{bmatrix}\\
		+
		P\int_0^t\mathrm{diag}\left(e^{\lambda_1 (t-s)},e^{\lambda_2 (t-s)},\dots,e^{\lambda_{2n-4}(t-s)}\right)P^{-1}\vec{F}(s)\;\mathrm{d}s.
	\end{multline}
\end{thm}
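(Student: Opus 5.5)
The plan is to pass to the first order reformulation \eqref{eq:sdhcs_firstorder} of Remark~\ref{rem:sdhcs_firstorder}, writing $\vec{W}=(\vec{U},\vec{V})^\top$ so that $\dot{\vec{W}}=M\vec{W}+\vec{F}(t)$ with the constant block matrix $M$ of Section~\ref{sec:mat}. Since \eqref{eq:sdhcs} acts column by column (each coordinate $j=1,\dots,p$ evolves independently with the same matrix), it suffices to treat a single coordinate, where $\vec{W}\in\mathbb{R}^{2(n-2)}$. For a linear system with constant coefficients and continuous forcing, existence and uniqueness of a global solution with data $\vec{W}(0)=(\vec{U}_0,\vec{V}_0)^\top$ is standard (Picard--Lindel\"of, with a globally Lipschitz right hand side). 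The variation of parameters formula then gives
\[\vec{W}(t)=e^{Mt}\vec{W}(0)+\int_0^t e^{M(t-s)}\vec{F}(s)\,\mathrm{d}s,\]
which one verifies by differentiating under the integral sign. Uniqueness also follows directly: the difference of two solutions solves $\dot{\vec{E}}=M\vec{E}$ with $\vec{E}(0)=0$.

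It remains to express $e^{Mt}$ as $P\,\mathrm{diag}(e^{\lambda_1 t},\dots,e^{\lambda_{2n-4}t})P^{-1}$. By Proposition~\ref{prop:esystem} with $(c,a,b)=(1,-2,1)$ and $m=n-2$, the eigenvalues of $M$ are $\lambda_k^\pm=\tfrac{1}{2}\big(-\beta\pm\sqrt{\beta^2+4\mu_k}\big)$ with $\mu_k=-2+2\cos(k\pi/(n-1))=-4\sin^2(k\pi/(2n-2))$, and the eigenvectors are $(u_k,\lambda_k^\pm u_k)^\top$. We let $P$ be the matrix whose columns are these $2(n-2)$ vectors. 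Once $P$ is known to be invertible, $M=P\Lambda P^{-1}$, so $e^{Mt}=Pe^{\Lambda t}P^{-1}$ and $e^{M(t-s)}=Pe^{\Lambda(t-s)}P^{-1}$. Pulling the constant matrices out of the integral then yields \eqref{eq:generalinitial}.

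The main obstacle is diagonalisability, that is, invertibility of $P$. Here is the argument I would give. The $u_k$ form an orthogonal basis of $\mathbb{R}^{n-2}$ because $A$ is symmetric with distinct eigenvalues $\mu_k$. On the two-dimensional invariant subspace $\mathrm{span}\{(u_k,0),(0,u_k)\}$, $M$ acts as the $2\times2$ matrix $\begin{bmatrix}0&1\\ \mu_k&-\beta\end{bmatrix}$. The pair $(u_k,\lambda_k^+u_k)$, $(u_k,\lambda_k^-u_k)$ spans this subspace if and only if $\lambda_k^+\neq\lambda_k^-$, i.e. $\beta^2+4\mu_k\neq0$. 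Since these subspaces for different $k$ together give a direct sum decomposition of $\mathbb{R}^{2(n-2)}$, $P$ is invertible exactly when no mode is critically damped.

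In the exceptional case $\beta=4\sin(k\pi/(2n-2))$ for some $k$, the eigenvalue $-\beta/2$ is defective and $M$ is not diagonalisable. There I would state that the formula holds with $P$ replaced by a Jordan basis: for that $k$, the block $\mathrm{diag}(e^{\lambda t},e^{\lambda t})$ is replaced by $e^{-\beta t/2}\begin{bmatrix}1&t\\0&1\end{bmatrix}$, using the generalised eigenvector $(0,u_k)$, mirroring the repeated-root case of Lemma~\ref{lem:selfsimilarcoupled}. Alternatively, the result can be obtained as the limit $\beta\to\beta_{\mathrm{crit}}$ of the generic formula, by continuity of $e^{Mt}$ in $\beta$. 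I would therefore note explicitly that \eqref{eq:generalinitial}, as written, assumes $\beta^2+4\mu_k\neq0$ for all $k$, which is the generic situation.
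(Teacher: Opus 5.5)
Your proposal takes the same route as the paper: pass to the first order system \eqref{eq:sdhcs_firstorder}, apply variation of parameters, and diagonalise $M$ using the eigenpairs from Proposition~\ref{prop:esystem}. Your extra check that $P$ is invertible is correct and goes beyond the paper, whose proof only asserts that $M$ is diagonalisable; when $\beta=4\sin(k\pi/(2n-2))$ for some $k$, the vectors $(u_k,\lambda_k^\pm u_k)^\top$ coincide and that $2\times2$ block is a Jordan block, so \eqref{eq:generalinitial} as written does need $\beta^2+4\mu_k\neq0$ for all $k$, and your Jordan-form modification handles the critical case.
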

\begin{proof}
	We write the system \eqref{eq:sdhcs} in the first order form \eqref{eq:sdhcs_firstorder} so
	\[\dot{\vec{W}}=M\vec{W}+\vec{F}(t)\]
	where $\vec{W}=(\vec{U},\vec{V})^\top,\vec{V}=\dot{\vec{U}}$ and
	\[M=
	\begin{bmatrix}
		0 & I \\
		A & \mathrm{diag}(-\beta)
	\end{bmatrix}.\]
	From Proposition \ref{prop:esystem} $M$ is diagonalisable with eigenvalues $\{\lambda_j\}_{j=1}^{2(n-2)}$ and eigenvector matrix $P$. So $M=P\mathrm{diag}(\lambda_1,\lambda_2,\dots,\lambda_{2n-4})P^{-1}$. The matrix exponential is therefore
	\[e^{Mt}=P\mathrm{diag}(e^{\lambda_1t},e^{\lambda_2t},\dots,e^{\lambda_{2n-4}t})P^{-1}.\]
	By the variation of parameters formula for linear systems, see for example \cite[Chapter IV Section 2]{PH02}, the unique solution is
	\[\vec{W}(t) = e^{Mt}\vec{W}(0)+\int_0^te^{M(t-s)}\vec{F}(s)\;\mathrm{d}s.\]
	Substituting the diagonalisation,
		\begin{multline}
		\begin{bmatrix}
			\vec{U}(t)\\
			\vec{V}(t)
		\end{bmatrix}
		=
		P\mathrm{diag}(e^{\lambda_1t},e^{\lambda_2t},\dots,e^{\lambda_{2n-4}}t)P^{-1}
		\begin{bmatrix}
		\vec{U}_0\\
		\vec{V}_0
		\end{bmatrix}\\
		+
		P\int_0^t\mathrm{diag}\left(e^{\lambda_1 (t-s)},e^{\lambda_2 (t-s)},\dots,e^{\lambda_{2n-4}(t-s)}\right)P^{-1}\vec{F}(s)\;\mathrm{d}s.
	\end{multline}
	as required.
\end{proof}

Theorem \ref{thm:generalinitialdata} furnishes the general solution in its fullest generality, accommodating arbitrary time-dependent boundary forcing. In many situations of interest, however, the boundary points are held fixed throughout the evolution. The integral term in \eqref{eq:generalinitial} can be evaluated in closed form, eliminating the convolution and yielding an expression in terms of the eigenpairs of $M$ alone.
\begin{cor}[Solution for general initial data]\label{cor:generalinitialdataconstant}
	Given an initial curve $\vec{X}_0=\vec{X}(0)$ with $n$ points in $\mathbb{R}^p$ and constant boundary values $f_1(t)=C_1$ and $f_n(t)=C_n$ \eqref{eq:sdhcs} has a unique solution given by
	\begin{multline}\label{eq:generalinitialdataconstant}
		\begin{bmatrix}
			\vec{U}(t)\\
			\vec{V}(t)
		\end{bmatrix}
		=
		P\mathrm{diag}(e^{\lambda_1t},e^{\lambda_2t},\dots,e^{\lambda_{2n-4}t})P^{-1}
		\begin{bmatrix}
		\vec{U}_0\\
		\vec{V}_0
		\end{bmatrix}\\
		+
		P\mathrm{diag}\left(\frac{e^{\lambda_1t}-1}{\lambda_1},\frac{e^{\lambda_2t}-1}{\lambda_2},\dots,\frac{e^{\lambda_{2n-4}t}-1}{\lambda_{2n-4}}\right)P^{-1}(C_1e_{n-1}+C_ne_{2(n-2)}).
	\end{multline}
\end{cor}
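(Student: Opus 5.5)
The plan is to specialise Theorem \ref{thm:generalinitialdata} to constant forcing and evaluate the convolution integral in closed form. With $f_1\equiv C_1$ and $f_n\equiv C_n$, the forcing in the first order form \eqref{eq:sdhcs_firstorder} is the constant vector $\vec{F}=(0,\,C_1e_1+C_ne_{n-2})^\top$. In the $2(n-2)$-dimensional indexing of the stacked vector $(\vec{U},\vec{V})^\top$, this is $C_1e_{n-1}+C_ne_{2(n-2)}$: the first and last entries of the lower block sit at positions $(n-2)+1$ and $2(n-2)$. The homogeneous term of \eqref{eq:generalinitial} then carries over unchanged, and only the integral needs attention.

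Since $\vec{F}$ does not depend on $s$, it can be taken outside the integral. I would write
\[\int_0^t P\,\mathrm{diag}\left(e^{\lambda_j(t-s)}\right)P^{-1}\vec{F}\,\mathrm{d}s=P\,\mathrm{diag}\left(\int_0^t e^{\lambda_j(t-s)}\,\mathrm{d}s\right)P^{-1}\vec{F}.\]
Each diagonal entry is elementary: substituting $\tau=t-s$ gives $\int_0^t e^{\lambda_j\tau}\,\mathrm{d}\tau=(e^{\lambda_jt}-1)/\lambda_j$, valid whenever $\lambda_j\neq0$. Substituting this into \eqref{eq:generalinitial} produces \eqref{eq:generalinitialdataconstant}. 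Uniqueness is inherited from Theorem \ref{thm:generalinitialdata}, or equivalently from standard linear ODE theory.

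The only genuine obstacle is justifying the two spectral facts this uses: that $\lambda_j\neq0$ for every $j$, and that $P$ exists, i.e.\ $M$ is diagonalisable.

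For non-vanishing, Proposition \ref{prop:esystem} shows each $\lambda_j$ satisfies $\lambda_j^2+\beta\lambda_j=\mu_k$ with $\mu_k=-2+2\cos(k\pi/(n-1))<0$. Hence $\lambda_j=0$ would force $\mu_k=0$, which is impossible. Equivalently, $M$ is invertible because $\det M=\det(-A)\neq0$, and $A$ is negative definite.

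For diagonalisability, I would observe that the two roots $\lambda_k^\pm$ associated with each $\mu_k$ coincide only when $\beta^2+4\mu_k=0$. Distinct $\mu_k$ give disjoint root pairs, and for $\lambda\ne\lambda'$ the eigenvectors $(u_k,\lambda u_k)^\top$ and $(u_k,\lambda' u_k)^\top$ are linearly independent. So $P$ exists except in this critically damped coincidence. In that degenerate case I would note that the convolution still equals $\int_0^t e^{M\tau}\,\mathrm{d}\tau\,\vec{F}=M^{-1}(e^{Mt}-I)\vec{F}$, which is the matrix-function form of the stated diagonal formula, and is valid because $M$ is invertible. The statement should therefore be read with this understanding, assuming, as Theorem \ref{thm:generalinitialdata} does, that $P$ diagonalises $M$.
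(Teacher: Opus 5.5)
Your proposal is correct and follows the paper's proof. Both pull the constant forcing $C_1e_{n-1}+C_ne_{2(n-2)}$ out of the variation-of-parameters integral and evaluate it entrywise in the eigenbasis as $(e^{\lambda_jt}-1)/\lambda_j$. Your substitution $\tau=t-s$ also sidesteps the paper's factorisation $e^{Mt}\int_0^te^{-Ms}\,\mathrm{d}s$, whose intermediate entries should read $(1-e^{-\lambda_jt})/\lambda_j$. Your checks that $\lambda_j\neq0$ and that $P$ fails to exist in the critically damped case $\beta^2+4\mu_k=0$, where $M^{-1}(e^{Mt}-I)$ still gives the integral, cover points the paper passes over silently.
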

\begin{proof}
	From Theorem \ref{thm:generalinitialdata} the solution to the first order system is
	\[\vec{W}(t) = e^{Mt}\vec{W}(0)+\int_0^te^{M(t-s)}\vec{F}(s)\;\mathrm{d}s.\]
	When the forcing is constant $\vec{F}(s)=\vec{F}=C_1e_{n-1}+C_ne_{2(n-2)}$ for all $s$, so the integral becomes
	\[\left(\int_0^te^{M(t-s)}\;\mathrm{d}s\right)\vec{F}.\]
	Using the diagonalisation $M=P\mathrm{diag}(\lambda_1,\lambda_2,\dots,\lambda_{2(n-2)})P^{-1}$ we compute
	\begin{multline*}
		\int_0^te^{M(t-s)}\;\mathrm{d}s=e^{Mt}\int_0^te^{-Ms}\;\mathrm{d}s=e^{Mt}P\mathrm{diag}\left(\frac{1-e^{\lambda_1t}}{\lambda_1},\frac{1-e^{\lambda_2t}}{\lambda_2},\dots,\frac{1-e^{\lambda_{2n-4}t}}{\lambda_{2n-4}}\right)P^{-1}\\
		=P\mathrm{diag}\left(\frac{e^{\lambda_{1}}-1}{\lambda_{1}},\frac{e^{\lambda_{2}}-1}{\lambda_{2}}\dots,\frac{e^{\lambda_{2n-4}}-1}{\lambda_{2n-4}}\right)P^{-1}.
	\end{multline*}
	Combining the homogeneous and particular parts gives the formulation required.
\end{proof}

Figure \ref{fig:general_initial_data} shows to example solutions to a general initial curve, one has fixed boundary where the other has a periodic decaying boundary.

\begin{figure}[htb]
	\centering
	\begin{subfigure}[t]{0.48\textwidth}
		\includegraphics[width=\textwidth]{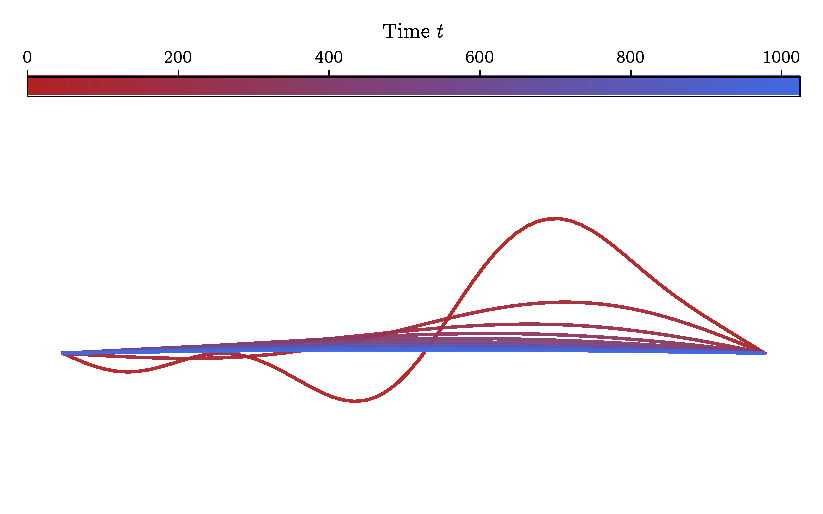}
		\caption{Constant boundary.}
	\end{subfigure}
	\hfill
	\begin{subfigure}[t]{0.48\textwidth}
		\includegraphics[width=\textwidth]{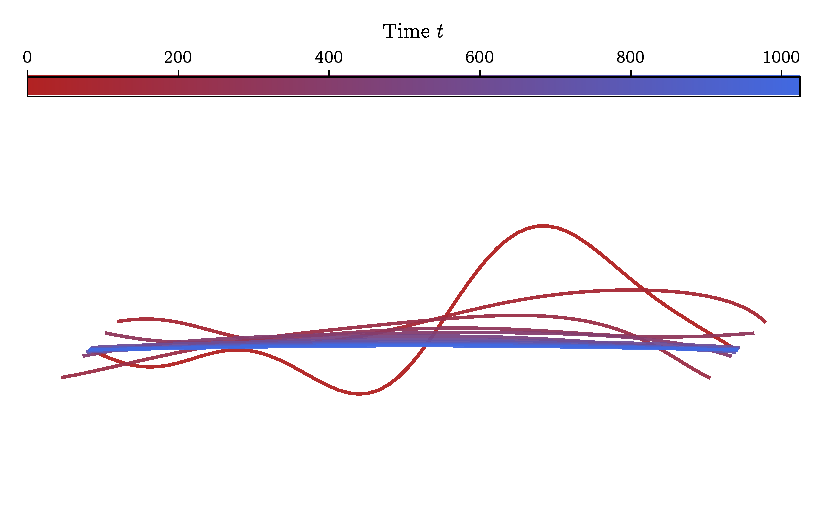}
		\caption{Time-dependent boundary.}
	\end{subfigure}
	\caption[Example solutions for general initial data.]{Example solutions to general initial data with constant boundary and a time-dependent boundary.}
	\label{fig:general_initial_data}
\end{figure}

Corollary \ref{cor:generalinitialdataconstant} exhibits an explicit solution formula in which every quantity is determined directly from the initial data and the spectrum of $M$, with no integration required. The structure of \eqref{eq:generalinitialdataconstant} clarifies the long-time behaviour of solutions with constant boundary forcing.
\begin{rem}
	Since every eigenvalue $\lambda_j$ of $M$ has strictly negative real part, the homogeneous matrix $P\mathrm{diag}(e^{\lambda_j t})P^{-1}$ decays exponentially, while the particular term involving $\mathrm{diag}((e^{\lambda_jt}-1)/\lambda_j)$ converges to the finite limit $\mathrm{diag}(1/\lambda_j)$ as $t\to\infty$. The limiting configuration
	\[\vec{U}_\infty=-P\mathrm{diag}(1/\lambda_j)P^{-1}(C_1e_{n-1}+C_ne_{2(n-2)})\]
	is precisely the static equilibrium $\vec{U}_\infty=-A^{-1}\alpha$, the straight line interpolant between the boundary points.
\end{rem}
\section{Semi-discrete flow between curves with boundary}\label{sec:curvaturedifference} 
The problem of flowing one geometric object to another by a curvature driven evolution was first raised by Yau and subsequently analysed by Lin and Tsai \cite{YL09}. The motivating questions is; given two curves $\gamma_0$ and $\gamma_1$ in the plane, under what conditions can one construct a curvature flow that evolves $\gamma_0$ to $\gamma_1$? Lin and Tsai gave the systematic treatment for convex closed plane curves, under the additional assumptions that $\gamma_0$ and $\gamma_1$ have the same length and that curvature of the evolving curve remained uniformly bounded, the length preserving linear flow
\[\frac{\partial X}{\partial t}=(\kappa-\overline{\kappa})N\]
evolves $\gamma_0$ exponentially to $\gamma_1$, where $\kappa$ and $\overline{\kappa}$ are the curvatures of the evolving and target curves respectively.

The semi-discrete parabolic case with boundary was treated by the authors in \cite{JM24_1}, where the parabolic curvature difference flow
\[\dot{\vec{U}} = A(\vec{U}-\vec{W})+\widetilde{\vec{F}}(t)\]
drives an piecewise linear curve $\vec{X}_0$ to any other piecewise linear curve $\vec{Y}$ with the same number of vertices.

The present section addresses the remaining case, the semi-discrete hyperbolic flow with boundary. We construct a curvature difference flow that drives any piecewise linear curve $\vec{X}_0$ to any other piecewise linear curve $\vec{Y}$ with the same number of vertices.
\begin{defn}[Curvature difference flow]\label{defn:curvature_difference}
	Let $\vec{X}_0$ and $\vec{Y}$ be piecewise linear curves with $n$ vertices, having interior points $\vec{U},\vec{W}$ and boundary points $X_1(0),X_n(0)$ and $Y_1,Y_n$ respectively. Let $f_1,f_n$ be prescribed boundary trajectories satisfying $f_1(0)=X_1(0)$ and $f_n(0)=X_n(0)$. The semi-discrete hyperbolic curvature difference flow from $\vec{X}_0$ to $\vec{Y}$ is the system
	\begin{equation}\label{eq:curvature_difference}
		\ddot{\vec{U}}+\beta\dot{\vec{U}}=A(\vec{U}-\vec{W})+\widetilde{\vec{F}}(t)
	\end{equation}
	where 
	\[\widetilde{\vec{F}}(t) = (f_1(t)-Y_1)e_1 + (f_n(t)-Y_n)e_n.\]
\end{defn}
\begin{thm}[Curvature difference flow solution]\label{thm:yau}
	Let $\vec{X}_0$ and $\vec{Y}$ be piecewise linear curves with $n$ vertices and corresponding interiors $\vec{U}_0$ and $\vec{W}$ as in Definition \ref{defn:curvature_difference}. Let $\beta>0$ and $f_1,f_n$ be prescribed boundary trajectories satisfying $f_1(0)=X_1(0),f_n=X_n(0)$ and
	\begin{equation}
		f_1(t)\to Y_1\text{ and }f_n(t)\to Y_n\text{ as }t\to\infty.
	\end{equation}
	Then the curvature difference flow \eqref{eq:curvature_difference} with initial conditions $\vec{X}(0)=\vec{X}_0$ and $\dot{\vec{U}}(0)=\vec{V}_0$ admits a unique solution defined for all $t\geqslant0$, given explicitly by
	\begin{multline}\label{eq:yauthm}
		\begin{bmatrix}
			\vec{U}(t)\\
			\vec{V}(t)
		\end{bmatrix}
		=
		\begin{bmatrix}
			\vec{W}\\
			0
		\end{bmatrix}
		+
		P\mathrm{diag}(e^{\lambda_1t},e^{\lambda_2t},\dots,e^{\lambda_{2n-4}t})P^{-1}
		\begin{bmatrix}
			\vec{U}_0-\vec{W}\\
		\vec{V}_0
		\end{bmatrix}\\
		+
		P\int_0^t\mathrm{diag}\left(e^{\lambda_1 (t-s)},e^{\lambda_2 (t-s)},\dots,e^{\lambda_{2n-4}(t-s)}\right)P^{-1}\widetilde{\vec{F}}(s)\;\mathrm{d}s.
	\end{multline}
	where the integral makes sense for all time and the integral term converges to zero as $t\to\infty$.
\end{thm}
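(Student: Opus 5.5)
The plan is to reduce \eqref{eq:curvature_difference} to the setting of Theorem \ref{thm:generalinitialdata} by the shift $\vec{Z}=\vec{U}-\vec{W}$. Since $\vec{W}$ is constant, $\ddot{\vec{Z}}+\beta\dot{\vec{Z}}=A\vec{Z}+\widetilde{\vec{F}}(t)$ with $\vec{Z}(0)=\vec{U}_0-\vec{W}$ and $\dot{\vec{Z}}(0)=\vec{V}_0$. Writing this in the first order form \eqref{eq:sdhcs_firstorder}, the forcing is $(0,\widetilde{\vec{F}}(t))^\top$; this is what $\widetilde{\vec{F}}$ denotes inside \eqref{eq:yauthm}, by abuse of notation as in Remark \ref{rem:sdhcs_firstorder}. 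The system matrix is the same $M$ as before, so by Proposition \ref{prop:esystem} it is diagonalised as $M=P\,\mathrm{diag}(\lambda_j)P^{-1}$. The variation of parameters formula, exactly as in the proof of Theorem \ref{thm:generalinitialdata}, then gives existence, uniqueness and the formula \eqref{eq:yauthm} after adding back $(\vec{W},0)^\top$. The integrand is continuous, since $f_1,f_n$ are continuous, so the integral is defined for every finite $t$. This is the meaning I take for ``makes sense for all time''. Diagonalisability needs the $\lambda_j$ to be distinct; if $\beta^2+4\mu_k=0$ for some $k$, I would fall back on $e^{Mt}$ directly and note the convergence argument below is unchanged.

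Next I check that every eigenvalue has negative real part when $\beta>0$. The eigenvalues are $\lambda=\tfrac{1}{2}(-\beta\pm\sqrt{\beta^2+4\mu_k})$, with $\mu_k=-2+2\cos(k\pi/(m+1))<0$. If $\beta^2+4\mu_k\ge0$, the square root is strictly less than $\beta$, so both roots are negative. Otherwise $\mathrm{Re}\,\lambda=-\beta/2$. Hence there are constants $C,\delta>0$ with $\|e^{Mt}\|\le Ce^{-\delta t}$ (with a harmless polynomial factor in the repeated root case). In particular the homogeneous term in \eqref{eq:yauthm} tends to zero.

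The main step is showing the convolution $I(t)=\int_0^t e^{M(t-s)}\widetilde{\vec{F}}(s)\,\mathrm{d}s\to0$, using only $\widetilde{\vec{F}}(s)\to0$ from the hypotheses $f_1\to Y_1$, $f_n\to Y_n$. I would use a standard splitting. Given $\varepsilon>0$, choose $S$ with $|\widetilde{\vec{F}}(s)|<\varepsilon$ for $s\ge S$. Then
\[|I(t)|\le Ce^{-\delta(t-S)}\int_0^S|\widetilde{\vec{F}}(s)|\,\mathrm{d}s+C\varepsilon\int_S^t e^{-\delta(t-s)}\,\mathrm{d}s\le Ce^{-\delta(t-S)}K_S+\frac{C\varepsilon}{\delta}.\]
Letting $t\to\infty$ and then $\varepsilon\to0$ gives $I(t)\to0$. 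Together with the decay of the homogeneous term, this shows $(\vec{U}(t),\vec{V}(t))\to(\vec{W},0)$. The boundary points converge to $Y_1,Y_n$ by hypothesis, so the flow carries $\vec{X}_0$ to $\vec{Y}$.

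The obstacle I expect is less analytic than bookkeeping. First, the possibly non-diagonalisable double-root case must be handled via the generic bound on $\|e^{Mt}\|$. Second, the forcing in the first order system must be the lifted vector $(0,\widetilde{\vec{F}})^\top$, so that the identity $A(\vec{U}-\vec{W})=A\vec{Z}$ is used consistently.
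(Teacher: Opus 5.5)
Your proposal is correct and follows the same route as the paper: the shift $\vec{Z}=\vec{U}-\vec{W}$ reduces \eqref{eq:curvature_difference} to \eqref{eq:sdhcs} with forcing $\widetilde{\vec{F}}$, Theorem~\ref{thm:generalinitialdata} then gives the explicit formula, and the strictly negative real parts of the $\lambda_j$ give the decay. Your $\varepsilon$--$S$ splitting of the convolution makes rigorous a step the paper only asserts. Your treatment of the defective case $\beta^2+4\mu_k=0$, where $M$ is not diagonalisable and one must work with $e^{Mt}$ directly, covers a case the paper passes over.
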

\begin{proof}
	Define $\vec{Z}=\vec{U}-\vec{W}$. Then $\dot{\vec{Z}}=\dot{\vec{U}}$ and $\ddot{\vec{Z}}=\ddot{\vec{U}}$. Substituting into \eqref{eq:curvature_difference} we obtain
	\[\ddot{\vec{Z}}+\beta\dot{\vec{Z}}=A\vec{Z} + (f_1(t)-Y_1)e_1 + (f_n(t)-Y_n)e_n.\]
	This is precisely \eqref{eq:sdhcs} for $\vec{Z}$ with modified forcing $\widetilde{f}_1(t)=f_1(t)-Y_1$ and $\widetilde{f}_n(t)=f_n(t)-Y_n$. Applying the general solution formula from Theorem \ref{thm:generalinitialdata} yields
	\begin{multline*}
		\begin{bmatrix}
			\vec{Z}(t)\\
			\dot{\vec{Z}}(t)
		\end{bmatrix}
		=
		P\mathrm{diag}(e^{\lambda_1t},e^{\lambda_2t},\dots,e^{\lambda_{2n-4}t})P^{-1}
		\begin{bmatrix}
		\vec{U}_0-\vec{W}\\
		\vec{V}_0
		\end{bmatrix}\\
		+
		P\int_0^t\mathrm{diag}\left(e^{\lambda_1 (t-s)},e^{\lambda_2 (t-s)},\dots,e^{\lambda_{2n-4}(t-s)}\right)P^{-1}\widetilde{\vec{F}}(s)\;\mathrm{d}s.
	\end{multline*}
	Translating back via $\vec{U}=\vec{Z}+\vec{W}$ yields \eqref{eq:yauthm}. For convergence we know from Proposition \ref{prop:esystem} the eigenvalues $\lambda_j$ have strictly negative real part when $\beta>0$ thus:
	\begin{enumerate}
		\item The homogeneous term satisfies
			\[||P\mathrm{diag}(e^{\lambda_1t},e^{\lambda_2t},\dots,e^{\lambda_{2n-4}})P^{-1}||\leqslant Ce^{-\mu t}\]
			for some $C,\mu>0$. Therefore it decays to zero.
		\item For the non-homogeneous part, since $\widetilde{f}_1(t)\to0$ and $\widetilde{f}_n(t)\to0$ as $t\to\infty$ and the matrix exponential $e^{M(t-s)}$ decays exponentially for $t-s>0$ the integral converges to zero.
	\end{enumerate}
	Therefore $\vec{Z}(t)\to0$ implying $\vec{U}(t)\to\vec{W}$ as $t\to\infty$ giving $\vec{X}(t)\to\vec{Y}$.
\end{proof}
\begin{rem}
	\leavevmode
	\begin{enumerate}
		\item The non-uniqueness is \emph{parametric} in the sense that the ``shape'' of the interpolation between $\vec{X}_0$ and $\vec{Y}$ is governed by the choice of boundary trajectories. This is analogous to choosing different homotopies between two paths.
		\item We have restricted here to the case where the initial and target curves have the same number of vertices. If instead this were not the case, one can duplicate vertices or add vertices along the line segments.
		\item One can also consider a target curve that is moving rather than stationary.
	\end{enumerate}
\end{rem}
The cleanest special case of Theorem \ref{thm:yau} arises when the initial and target curves share the same boundary points and the boundary is held fixed through the evolution. The boundary forcing term vanishes, the integral term drops out and the flow reduces to the homogeneous decay of the displacement $\vec{Z}=\vec{U}-\vec{W}$.
\begin{cor}[Curvature difference flow solution]
	Let $\vec{X}_0$ and $\vec{Y}$ be piecewise linear curves with $n$ vertices sharing the same boundary points, that is $X_1(0)=Y_1$ and $X_n(0)=Y_n$. If the boundary points are held fixed, $f_1(t)=Y_1$ and $f_n(t)=Y_n$ for all $t\geqslant0$, then the curvature difference flow \eqref{eq:curvature_difference} with $\beta>0$ has a unique solution given by
	\begin{equation}\label{eq:yaucor}
	\begin{bmatrix}
		\vec{U}(t)\\
		\vec{V}(t)
	\end{bmatrix}
	=
	\begin{bmatrix}
		\vec{W}\\
		0\\
	\end{bmatrix}
	+
	P\mathrm{diag}\left(e^{\lambda_1t},e^{\lambda_2 t},\dots,e^{\lambda_{2n-4}t}\right)P^{-1}
	\begin{bmatrix}
		\vec{U}_0-\vec{W}\\
		\vec{V}_0
	\end{bmatrix}.
	\end{equation}
	The solution exists for all time and converges to $\vec{W}$, hence $\vec{X}$ converges to $\vec{Y}$.
\end{cor}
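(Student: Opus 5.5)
This corollary follows directly from Theorem \ref{thm:yau}, specialised to fixed boundary data that already equal the target endpoints. Take $f_1(t)\equiv Y_1$ and $f_n(t)\equiv Y_n$. The compatibility conditions $f_1(0)=X_1(0)$ and $f_n(0)=X_n(0)$ of Definition \ref{defn:curvature_difference} then hold because $X_1(0)=Y_1$ and $X_n(0)=Y_n$. The limit hypothesis $f_1(t)\to Y_1$, $f_n(t)\to Y_n$ is trivially satisfied. So all hypotheses of Theorem \ref{thm:yau} hold with $\beta>0$, and we get existence, uniqueness and the representation \eqref{eq:yauthm}.

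Next we compute the forcing. We have $\widetilde{\vec{F}}(t)=(f_1(t)-Y_1)e_1+(f_n(t)-Y_n)e_n\equiv 0$, so the convolution integral in \eqref{eq:yauthm} vanishes identically for every $t$. This leaves exactly \eqref{eq:yaucor}. Alternatively, one can argue directly: setting $\vec{Z}=\vec{U}-\vec{W}$ gives the homogeneous first order system $\dot{\vec{Y}}=M\vec{Y}$ with $\vec{Y}=(\vec{Z},\dot{\vec{Z}})^\top$. Its unique solution is $e^{Mt}\vec{Y}(0)$, and the diagonalisation of $M$ from Proposition \ref{prop:esystem} (as used in Theorem \ref{thm:generalinitialdata}) writes this as $P\,\mathrm{diag}(e^{\lambda_j t})P^{-1}$. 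Global existence is automatic, since the system is linear with constant coefficients.

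For convergence, Proposition \ref{prop:esystem} gives $\mathrm{Re}\,\lambda_j<0$ for all $j$ when $\beta>0$. The reason is that $\lambda^2+\beta\lambda=\mu_k<0$ with $\mu_k=-2+2\cos(k\pi/(n-1))$. If the discriminant $\beta^2+4\mu_k$ is negative, the real part is $-\beta/2$. Otherwise $\sqrt{\beta^2+4\mu_k}<\beta$, so both roots are negative. Hence $\|P\,\mathrm{diag}(e^{\lambda_j t})P^{-1}\|\leqslant Ce^{-\mu t}$ with $\mu=\min_j|\mathrm{Re}\,\lambda_j|>0$. It follows that $\vec{U}(t)\to\vec{W}$ and $\vec{V}(t)\to0$. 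Since the boundary points are fixed at $Y_1$ and $Y_n$, this gives $\vec{X}(t)\to\vec{Y}$.

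The only step needing care is diagonalisability of $M$. It can fail when $\beta^2+4\mu_k=0$ for some $k$, because then the two roots coincide and $M$ has a Jordan block. To handle this, I would note that the conclusion only needs $e^{Mt}$, which decays exponentially regardless. If the diagonal form is insisted upon, \eqref{eq:yaucor} should be read with $P\,\mathrm{diag}(e^{\lambda_j t})P^{-1}$ replaced by $e^{Mt}$, with factors $te^{-\beta t/2}$ on the affected modes. This is the same convention already implicit in Theorem \ref{thm:generalinitialdata}.
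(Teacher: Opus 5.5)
Your proposal is correct and follows the paper's own proof. Like the paper, you specialise Theorem~\ref{thm:yau}, observe that $\widetilde{\vec{F}}\equiv 0$ so the convolution term drops out, and use $\mathrm{Re}\,\lambda_j<0$ for $\beta>0$ to get global existence and convergence. Your remark that $M$ is not diagonalisable when $\beta^2+4\mu_k=0$ for some $k$ (a Jordan block at $\lambda=-\beta/2$) is a valid refinement that the paper glosses over, and reading $P\,\mathrm{diag}(e^{\lambda_j t})P^{-1}$ as $e^{Mt}$ handles it without changing the conclusion.
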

\begin{proof}
	Formula \eqref{eq:yaucor} follows by substituting the constant boundary values into the solution formula \eqref{eq:yauthm} and evaluating. Since $X_1(0)=Y_1$ and $X_n(0)=Y_n$ the forcing vanishes because
	\[\widetilde{f}_1(t)=f_1(t)-Y_1=0\text{ and }\widetilde{f}_n(t)=f_n(t)-Y_n=0.\]
	The integral term in \eqref{eq:yauthm} therefore vanishes identically and the solution reduces to \eqref{eq:yaucor}. This formula obviously makes sense for all $t$, since all eigenvalues $\lambda_j$ have strictly negative real part, see Proposition \ref{prop:esystem}.
\end{proof}
Figure \ref{fig:between_curves} demonstrates the curvature difference flow in two seperate cases, one where the boundary is fixed and the other where the boundary decays to the target boundary.

\begin{figure}[htb]
	\centering
	\begin{subfigure}[t]{0.48\textwidth}
		\includegraphics[width=\textwidth]{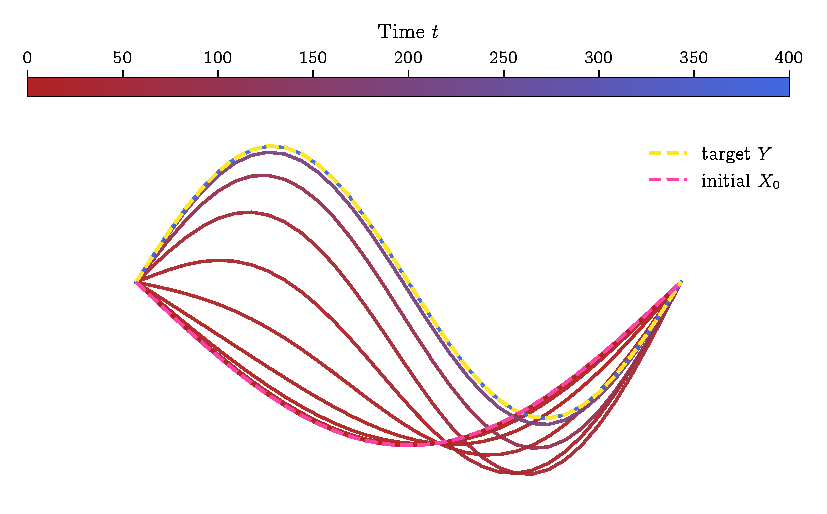}
		\caption{Constant boundary.}
	\end{subfigure}
	\hfill
	\begin{subfigure}[t]{0.48\textwidth}
		\includegraphics[width=\textwidth]{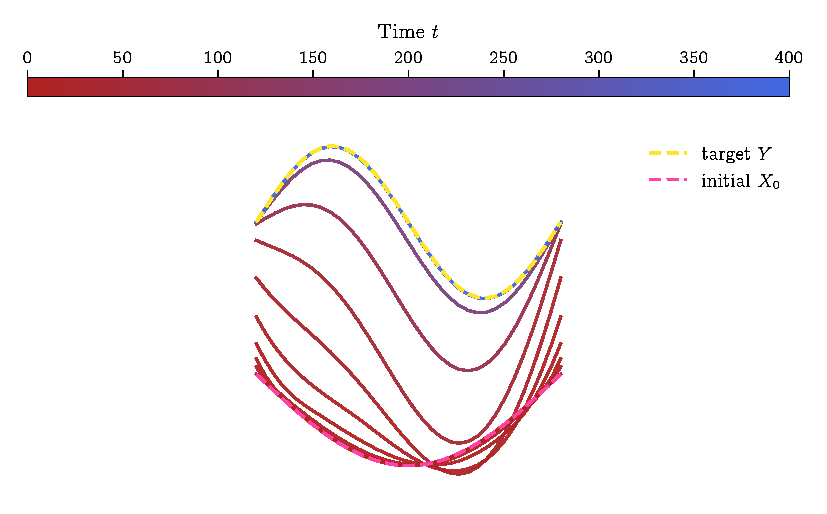}
		\caption{Time dependent boundary.}
	\end{subfigure}
	\caption[Example solutions to the curvature difference flow.]{Example solutions to the Yau type curvature difference flow with constant boundary or time dependent boundary.}
	\label{fig:between_curves}
\end{figure}
\section{Variants of the system}

\subsection{Coordinate-wise anisotropy}\label{sec:variants}
The coordinate-wise variant is the most direct anisotropic extension of \eqref{eq:sdhcs} since each ambient coordinate is given its own \emph{spring stiffness} $c_j$ while the spatial coupling structure encoded by $A$ is unchanged. Its tractability stems from the observation that the matrix $A$ acts on the vertex index only, not on the ambient coordinates.
\begin{defn}[Coordinate-wise anisotropic hyperbolic flow]
	Let $c_1,c_2,\dots,c_p>0$ be aniso-tropy constants. The coordinate-wise anisotropic semi-discrete hyperbolic flow is
	\begin{equation}\label{eq:coordwiseani}
		\ddot{U}^j + \beta\dot{U}^j = c_jAU^j+f_1(t)^j + f_n(t)^j.\text{ for }j=1,2,\dots,p.
	\end{equation}
	Equivalently in matrix form
	\begin{equation*}
		\ddot{\vec{U}}+\beta\dot{\vec{U}}=A\vec{U}\mathrm{diag}(c_1,c_2,\dots,c_p) + \vec{F}(t).
	\end{equation*}
\end{defn}
\begin{rem}
	The system \eqref{eq:coordwiseani} decouples by coordinate $j$. Each coordinate evolves independently under the scalar hyperbolic equation with coefficient matrix $c_jA$.
\end{rem}

\begin{prop}[Coordinate-wise anisotropy]\label{prop:coordinatewise}
	Given an initial curve $\vec{X}_0=\vec{X}(0)$ with $n$ vertices in $\mathbb{R}^p$, constants $c_j>0$ and $X_1(t)=f_1(t)$ and $X_n(t)=f_1(t)$ \eqref{eq:coordwiseani} has a unique solution given by
	\begin{align*}
	\begin{bmatrix}
		\vec{U}^j(t) \\
		\vec{V}^j(t)
	\end{bmatrix}
	&=
	P\mathrm{diag}(e^{c_j\lambda_1t},e^{c_j\lambda_2t},\dots,e^{c_j\lambda_{2n-4}})P^{-1}
	\begin{bmatrix}
		\vec{U}^j_0\\
		\vec{V^j}_0\\
	\end{bmatrix} \\
	&\quad +
	P\mathrm{diag}(e^{c_j\lambda_1t},e^{c_j\lambda_2t},\dots,e^{c_j\lambda_{2n-4}t})P^{-1}\\
    &\qquad \cdot 
	\int_0^t
		P\mathrm{diag}(e^{-c_j\lambda_1 s},e^{-c_j\lambda_2s},\dots,e^{-c_j\lambda_{2n-4}s})P^{-1}\vec{F}^j(s) ds.
	\end{align*}
\end{prop}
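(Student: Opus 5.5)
The plan is to use the remark following the definition: the flow \eqref{eq:coordwiseani} decouples by ambient coordinate. For each fixed $j$ the $j$-th column $\vec{U}^j$ of $\vec{U}$ satisfies the scalar-coefficient system
\[\ddot{\vec{U}}^j+\beta\dot{\vec{U}}^j=c_jA\vec{U}^j+\vec{F}^j(t),\]
where $\vec{F}^j(t)=f_1^j(t)e_1+f_n^j(t)e_n$. This holds because post-multiplying by $\mathrm{diag}(c_1,\dots,c_p)$ simply scales the $j$-th column. I will therefore treat each $j$ separately and apply the machinery of Theorem \ref{thm:generalinitialdata} with $A$ replaced by $c_jA$.

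First I reduce to first order as in Remark \ref{rem:sdhcs_firstorder}. Setting $\vec{W}^j=(\vec{U}^j,\vec{V}^j)^\top$ with $\vec{V}^j=\dot{\vec{U}}^j$ gives
\[\dot{\vec{W}}^j=M_j\vec{W}^j+\vec{F}^j(t),\qquad M_j=\begin{bmatrix}0&I\\ c_jA&-\beta I\end{bmatrix}.\]
Since $c_jA=\mathrm{tridiag}(c_j,-2c_j,c_j)$ is again tridiagonal Toeplitz, Proposition \ref{prop:esystem} applies directly. The eigenvectors $u_k$ of $c_jA$ coincide with those of $A$ (the ratio of off-diagonal entries is still $1$), and its eigenvalues are $c_j\mu_k$. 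Hence $M_j$ is diagonalisable, with eigenvalues solving $\lambda^2+\beta\lambda=c_j\mu_k$ and eigenvectors $(u_k,\lambda u_k)^\top$.

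Next I apply the variation of parameters formula, exactly as in the proof of Theorem \ref{thm:generalinitialdata}. This gives
\[\vec{W}^j(t)=e^{M_jt}\vec{W}^j(0)+e^{M_jt}\int_0^te^{-M_js}\vec{F}^j(s)\,\mathrm{d}s.\]
Substituting $e^{M_jt}=P_j\,\mathrm{diag}(e^{\lambda^{(j)}_it})P_j^{-1}$ then yields the stated formula, with the semigroup property used to factor out $e^{M_jt}$ in front of the integral. Uniqueness follows from standard linear ODE theory for each $j$. Because the coordinates decouple, the solution of the full system is obtained by assembling the columns $j=1,\dots,p$.

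The main obstacle is interpreting the eigenvalues written in the statement as $c_j\lambda_i$. The eigenvalues of $M_j$ are $\bigl(-\beta\pm\sqrt{\beta^2+4c_j\mu_k}\bigr)/2$, which are not $c_j$ times the eigenvalues of $M$ unless $\beta=0$. Likewise the second block row of the eigenvectors, $\lambda u_k$, depends on $j$, so $P$ must really be $P_j$. I would make these identifications explicit: the exponents denote the eigenvalues of $M_j$ (they genuinely equal $c_j\lambda_i$ only if the damping is scaled to $c_j\beta$, or after a time rescaling in the undamped case), and $P$ is the corresponding $j$-dependent eigenvector matrix. With this reading the argument above goes through verbatim.
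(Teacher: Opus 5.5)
Your route is the paper's. Its proof is the single sentence that the result ``follows in the same manner as Theorem~\ref{thm:generalinitialdata}'': decouple, pass to first order with your $M_j$, diagonalise, and apply variation of parameters. You are also right, on a point the paper passes over, that the formula cannot hold literally with exponents $c_j\lambda_i$ and a $j$-independent $P$. Reading the exponents as the eigenvalues $\tfrac{-\beta\pm\sqrt{\beta^2+4c_j\mu_k}}{2}$ of $M_j$, with eigenvector matrix $P_j$, is the correct repair.

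However, your explanation of when the literal version holds is wrong. At $\beta=0$ the eigenvalues of $M_j$ are $\pm i\sqrt{-c_j\mu_k}=\sqrt{c_j}\,\lambda_i$, not $c_j\lambda_i$. Changing the damping to $c_j\beta$ does not help either, since $\nu=c_j\lambda_i$ gives $\nu^2+c_j\beta\nu=c_j^2\mu_k\neq c_j\mu_k$. What the stated expression actually is: $P\,\mathrm{diag}(e^{c_j\lambda_it})P^{-1}=e^{c_jMt}$, the propagator of $\ddot{\vec U}+c_j\beta\dot{\vec U}=c_j^2A\vec U$. This is the isotropic flow run at speed $c_j$, with second component $\dot{\vec U}/c_j$, and it is not \eqref{eq:coordwiseani} unless $c_j=1$.

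Separately, there is a small genuine gap. Your step ``hence $M_j$ is diagonalisable'' does not follow from Proposition~\ref{prop:esystem}, which only lists eigenpairs. Suppose $\beta^2+4c_j\mu_k=0$ for some $k$, i.e.\ $\beta=4\sqrt{c_j}\sin\frac{k\pi}{2(n-1)}$. On the invariant plane spanned by $(u_k,0)^\top$ and $(0,u_k)^\top$, $M_j$ acts as the companion matrix $\begin{bmatrix}0&1\\ c_j\mu_k&-\beta\end{bmatrix}$. This has the double eigenvalue $-\beta/2$ but only one eigenvector, so $M_j$ is defective and no $P_j$ exists.

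Existence and uniqueness survive, because $\vec W^j(t)=e^{M_jt}\vec W^j(0)+\int_0^te^{M_j(t-s)}\vec F^j(s)\,\mathrm{d}s$ needs no diagonalisation. At these critical damping values, though, the $P\,\mathrm{diag}\,P^{-1}$ representation must either be replaced by a Jordan block (producing a $te^{-\beta t/2}$ term) or the values must be excluded. The same gap is present in Theorem~\ref{thm:generalinitialdata}, on which both you and the paper rely.
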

\begin{proof}
	The proof follows in the same manner as Theorem \ref{thm:generalinitialdata}.
\end{proof}

\begin{figure}[htb]
	\centering
	\begin{subfigure}[t]{0.48\textwidth}
		\includegraphics[width=\textwidth]{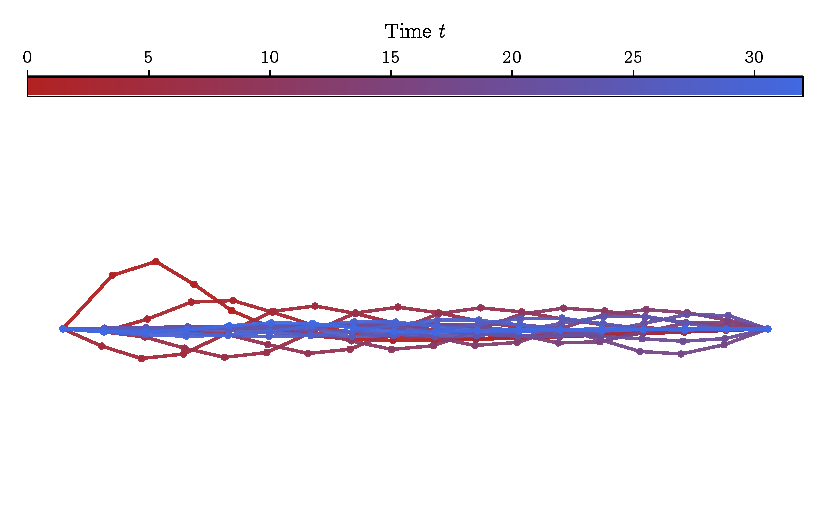}
		\caption{Isotropic.}
	\end{subfigure}
	\hfill
	\begin{subfigure}[t]{0.48\textwidth}
		\includegraphics[width=\textwidth]{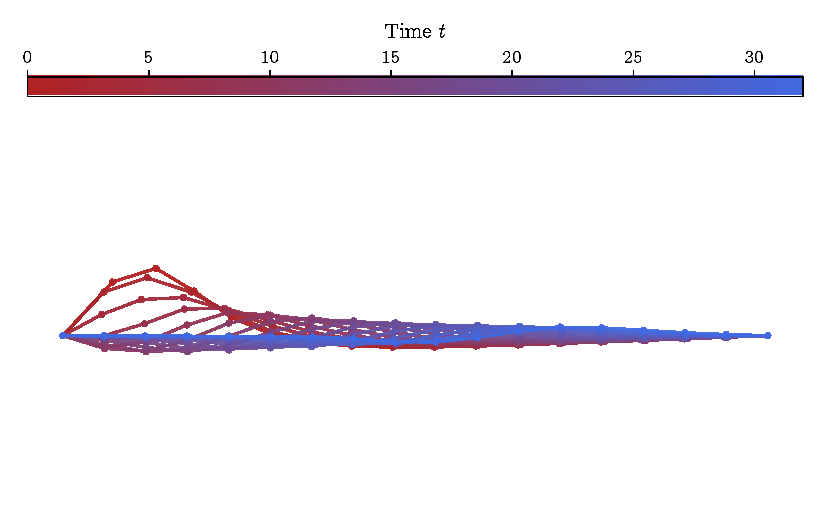}
		\caption{Anisotropic, slower in the vertical direction.}
	\end{subfigure}
	\caption{Example evolutions of the coordinate-wise anisotropic flow.}
	\label{fig:coordinate_wise}
\end{figure}

Figure \ref{fig:coordinate_wise} compares the isotropic flow $(c_1,c_2)=(1,1)$ with the anisotropic case $(c_1,c_2)=(1,1/4)$. Thus the vertical component evolves four times slower than the horizontal, produces the asymmetric relaxation visible in panel (b).
\subsection{Vertex-wise anisotropy}
The vertex-wise variant is the natural model for non-uniform discrete media, such as a mass-spring chain with different masses at each vertex, a serial manipulator with non-uniform link inertias, and others. The coefficient matrix $T$ is obtained by left-multiplying $A$ by a positive diagonal matrix, the result is neither Toeplitz nor symmetric, but inherits from $A$ the property of being tridiagonal.
\begin{defn}[Vertex-wise anisotropic hyperbolic flow]
	Let $c_2,c_3,\dots,c_{n-1}>0$ be \emph{mobility constants}. The vertex-wise anisotropic semi-discrete hyperbolic flow is
	\begin{equation}
		\ddot{\vec{U}}(t)+\beta\dot{\vec{U}}(t)=T\vec{U}+\vec{f}(t)
	\end{equation}
	where $T=\mathrm{diag}(c_2,c_3,\dots,c_{n-1})A$.
\end{defn}
\begin{rem}
	Unlike the coordinate-wise case, the vertex-wise system does \emph{not} decouple. The coefficient matrix $T$ is neither  Toeplitz nor symmetric, so the eigenvalue formulae from section \ref{sec:mat} do not apply directly. However, $T$ is still tridiagonal and as such this structure can be exploited.
\end{rem}

\begin{prop}[Eigenvalue properties]\label{prop:vertexwise}
	Let $T=\mathrm{diag}(c_2,c_3,\dots,c_{n-1})A$ with all $c_i>0$. Then:
	\begin{enumerate}[label=(\roman*)]
		\item The eigenvalues $\mu_1,\mu_2,\dots,\mu_{n-2}$ of $T$ are real, negative, and simple.
		\item The first-order system matrix
			\[
			M=
			\begin{bmatrix}
				0 & I\\
				T & -\beta I
			\end{bmatrix}
			\]
			has $2(n-2)$ eigenvalues given by
			\[\lambda_k^\pm = \frac{-\beta\pm\sqrt{\beta^2+4\mu_k}}{2},\text{ for }k=1,2,\dots,n-2.\]
		\item All eigenvalues of $M$ have a strictly negative real part.
	\end{enumerate}
\end{prop}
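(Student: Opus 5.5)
The plan is to show that $T=DA$, with $D=\mathrm{diag}(c_2,\dots,c_{n-1})$, is similar to a symmetric negative definite tridiagonal matrix. Then (ii) follows from Proposition \ref{prop:esystem}'s reduction argument and (iii) from elementary analysis of the scalar quadratic.

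For (i), set $D^{1/2}=\mathrm{diag}(\sqrt{c_2},\dots,\sqrt{c_{n-1}})$ and consider
\[S \vcentcolon= D^{-1/2}TD^{1/2}=D^{1/2}AD^{1/2}.\]
This is symmetric, so $T$ has real eigenvalues and is diagonalisable. The matrix $-A$ is positive definite: its eigenvalues are $2-2\cos(k\pi/(n-1))>0$ by Lemma \ref{lem:etri}. Hence $x^\top Sx=(D^{1/2}x)^\top A(D^{1/2}x)<0$ for $x\neq0$, so all $\mu_k<0$. For simplicity, I use the standard fact that a symmetric tridiagonal matrix with nonzero off-diagonal entries has simple eigenvalues. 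The off-diagonal entries of $S$ are $\sqrt{c_ic_{i+1}}\neq0$. The fact itself holds because, for any $\mu$, deleting the first row and column of $S-\mu I$ leaves a matrix whose first superdiagonal minor is triangular with nonzero diagonal. So $\mathrm{rank}(S-\mu I)\geqslant m-1$, and each eigenspace is one-dimensional. A symmetric matrix has geometric multiplicity equal to algebraic multiplicity, so the eigenvalues are simple.

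For (ii), I would repeat the block-determinant computation of Proposition \ref{prop:esystem} with $\mathrm{tridiag}(c,a,b)$ replaced by $T$. The block $-\lambda I$ commutes with everything, which gives $\det(M-\lambda I)=\det(\lambda^2I+\beta\lambda I-T)$. Writing $T=Q\,\mathrm{diag}(\mu_k)Q^{-1}$ with $Q=D^{1/2}O$, where $O$ is orthogonal, this factors as
\[\det(M-\lambda I)=\prod_k(\lambda^2+\beta\lambda-\mu_k).\]
This yields the stated $\lambda_k^\pm$, with eigenvectors $(u_k,\lambda_k^\pm u_k)^\top$ exactly as in Proposition \ref{prop:blockdiaggeneral}, taking $D=\beta I$, which is trivially simultaneously diagonalisable with $T$.

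For (iii), use $\mu_k<0$ and split into cases. If $\beta^2+4\mu_k<0$, the roots are complex with real part $-\beta/2$. If $\beta^2+4\mu_k\geqslant0$, then $\sqrt{\beta^2+4\mu_k}<\beta$ because $\mu_k<0$, so both real roots are negative. This needs $\beta>0$. When $\beta=0$ the roots are $\pm i\sqrt{-\mu_k}$ and have zero real part. So I would state (iii) under the standing assumption $\beta>0$ and flag the undamped case separately, as in Proposition \ref{prop:tperiodicundamped}.

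The main obstacle is the simplicity claim in (i), since everything else is routine once the symmetrisation is in place. The rank argument via the nonvanishing off-diagonal entries of $S$ is what I would write out carefully.
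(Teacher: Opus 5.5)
Your proof is correct, and it follows a different route from the paper's in parts (i) and (ii).

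\textbf{Part (i).} The paper gives no argument here; it simply cites \cite[Proposition 5.2]{JM24_1}. You supply a self-contained proof. The symmetrisation $S=D^{1/2}AD^{1/2}$ gives reality, diagonalisability and negativity at once, and simplicity follows from the standard Jacobi-matrix rank argument. One wording fix is needed. Deleting the first row and first column of $S-\mu I$ does \emph{not} produce a triangular matrix. You want to delete the first row and the \emph{last} column (or the last row and the first column). That leaves an $(m-1)\times(m-1)$ triangular matrix whose diagonal consists of the off-diagonal entries $\sqrt{c_ic_{i+1}}\neq0$, which gives $\mathrm{rank}(S-\mu I)\geqslant m-1$. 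Also, avoid reusing $D$ for both $\mathrm{diag}(c_i)$ and the damping matrix of Proposition \ref{prop:blockdiaggeneral}.

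\textbf{Part (ii).} The paper substitutes $v=(u_k,\lambda u_k)^\top$ into $Mv=\lambda v$ and reads off $\lambda^2+\beta\lambda-\mu_k=0$. This exhibits each $\lambda_k^\pm$ as an eigenvalue, but does not by itself account for algebraic multiplicities. Your factorisation $\det(M-\lambda I)=\prod_k(\lambda^2+\beta\lambda-\mu_k)$ does. In particular, it correctly handles a critically damped mode $\beta^2+4\mu_k=0$, where $\lambda_k^+=\lambda_k^-$ is a double root with only a one-dimensional eigenspace. So your route justifies the count of $2(n-2)$ eigenvalues more fully.

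\textbf{Part (iii).} This matches the paper, except that you use $\sqrt{\beta^2+4\mu_k}<\beta$ where the paper uses Vi\`ete's formulas. Your caveat is warranted. The paper's proof silently uses $\beta>0$ (through $-\beta<0$ and $-\beta/2<0$), although the flow is defined for $\beta\geqslant0$. For $\beta=0$ the eigenvalues $\pm i\sqrt{-\mu_k}$ are purely imaginary, so (iii) should be stated under $\beta>0$.
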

\begin{proof} 
	\leavevmode
	\begin{enumerate}[label=(\roman*)]
		\item This is established in \cite[Proposition 5.2]{JM24_1}.
		\item Let $\mu_k$ be an eigenvalue of $T$ with eigenvector $u_k$. Setting $v=(u_k,\lambda u_k)^\top$, the eigenvalue equation $Mv=\lambda v$ reduces to
			\[Tu_k=(\lambda^2+\beta\lambda)u_k\implies \lambda^2+\beta\lambda-\mu_k=0.\]
		\item Since $\mu_k<0$ the discriminant is $\Delta_k=\beta^2+4\mu_k=\beta^2-4|\mu_k|$. If $\Delta_k\geqslant0$ both roots are real and using Viete's formulae (see for example \cite{VI46}) we know $\lambda_k^++\lambda_k^-=-\beta<0$ and $\lambda_k^+\lambda_k^-=-\mu_k>0$ thus both roots are negative. If $\Delta_k<0$ the roots are complex with real part $-\beta/2<0$.
	\end{enumerate}
\end{proof}

Figure \ref{fig:vertex_wise} compares the isotropic flow $(c_i\equiv 1)$ with the anisotropic case where the central vertices have a high mobility (determined by a gaussian centered on the middle vertex).
\begin{figure}[htb]
	\centering
	\begin{subfigure}[t]{0.48\textwidth}
		\includegraphics[width=\textwidth]{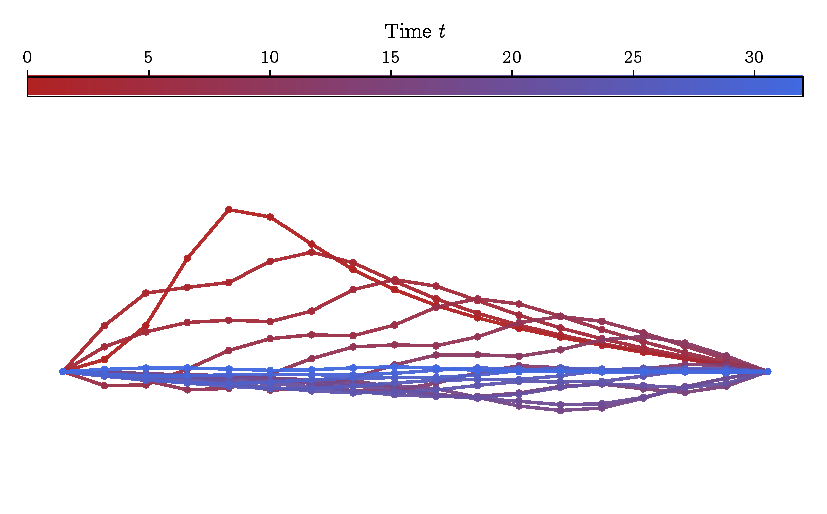}
		\caption{Isotropic.}
	\end{subfigure}
	\hfill
	\begin{subfigure}[t]{0.48\textwidth}
		\includegraphics[width=\textwidth]{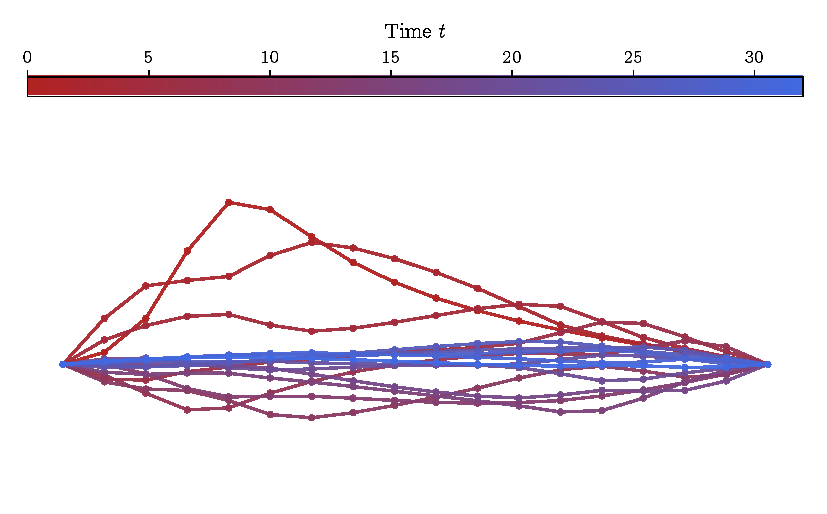}
		\caption{Anisotropic, faster central vertices.}
	\end{subfigure}
	\caption{Example evolutions of the vertex-wise anisotropic flow.}
	\label{fig:vertex_wise}
\end{figure}

\subsection{Coefficient matrix variants}
The two anisotropic variants above modify \eqref{eq:sdhcs} by rescaling either the ambient coordinates or the vertices themselves, whilst leaving the structure of the coefficient matrices intact. A third natural direction is to vary the coefficient matrices themselves, replacing the scalar damping $\beta I$ and the spatial operator $A$ by general matrices $D$ and $T$ respectively. Theorem \ref{thm:generalinitialdata} extends, namely to any pair $(T,D)$ that are simultaneously diagonalisable, the exact setting anticipated by Proposition \ref{prop:blockdiaggeneral} and Corollary \ref{cor:blockdiagspecific}. We develop this correspondence with two worked examples from dynamical systems that are modifications of the problems presented in \cite[Section 3.9]{FT01}.

\begin{defn}\label{defn:matrixvariantsdhcs}
    Let $T$ and $D$ be $(n-2)\times(n-2)$ real matrices sharing a common eigenbasis $\{u_k\}_{k=1}^{n-2}$ with $Tu_k=\mu u_k$ and $Du_k=\omega_ku_k$. The \emph{matrix coefficient} semi-discrete hyperbolic flow is
    \begin{equation}\label{eq:matrixvariantflow}
        \ddot{\vec{U}}+D\dot{\vec{U}}=T\vec{U}+\vec{f}(t),
    \end{equation}
    where the forcing $\vec{f}(t)$ is an arbitrary $(n-2)\times p$ matrix of driving terms.
\end{defn}
The flow \eqref{eq:sdhcs} is the case $T=A, D=\beta I$ with boundary forcing $\vec{f}(t)=f_1(t)e_1+f_n(t)e_n$.
\begin{prop}
    Let $T,D$ be as in Definition \ref{defn:matrixvariantsdhcs}, let
    \[\lambda_k^\pm=\frac{-\omega_k\pm\sqrt{\omega_k^2+4\mu_k}}{2},\quad k=1,2,\dots,n-2,\]
    be the eigenvalues of $\begin{bmatrix} 0 & I \\ T & -D\end{bmatrix}$ given by Proposition \ref{prop:blockdiaggeneral}, and let $P$ be the corresponding matrix of eigenvalues. Then given interior initial data $\vec{U}_0,\vec{V}_0$ the flow \eqref{eq:matrixvariantflow} has the unique solution
    \begin{multline}
        \begin{bmatrix}
			\vec{U}(t)\\
			\vec{V}(t)
		\end{bmatrix}
		=
		P\mathrm{diag}(e^{\lambda_1t},e^{\lambda_2t},\dots,e^{\lambda_{2n-4}}t)P^{-1}
		\begin{bmatrix}
		\vec{U}_0\\
		\vec{V}_0
		\end{bmatrix}\\
		+
		P\int_0^t\mathrm{diag}\left(e^{\lambda_1 (t-s)},e^{\lambda_2 (t-s)},\dots,e^{\lambda_{2n-4}(t-s)}\right)P^{-1}\vec{F}(s)\;\mathrm{d}s.
	\end{multline}
    with $\vec{F}(t)=(0,\vec{f}(t))^\top$.
\end{prop}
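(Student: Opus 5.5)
The plan is to follow the proof of Theorem \ref{thm:generalinitialdata}, with Proposition \ref{prop:blockdiaggeneral} taking the place of Proposition \ref{prop:esystem} as the source of the spectral decomposition.

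\emph{First-order reduction.} Set $\vec{V}=\dot{\vec{U}}$ and $\vec{W}=(\vec{U},\vec{V})^\top$. Then \eqref{eq:matrixvariantflow} is equivalent to $\dot{\vec{W}}=B\vec{W}+\vec{F}(t)$, where
\[B=\begin{bmatrix}0&I\\T&-D\end{bmatrix}\quad\text{and}\quad \vec{F}(t)=(0,\vec{f}(t))^\top.\]
The coefficient matrix $B$ is constant. If $\vec{f}$ is continuous, or even only locally integrable, the standard linear theory gives existence and uniqueness for all $t$. The variation of parameters formula \cite[Chapter IV Section 2]{PH02} then gives
\[\vec{W}(t)=e^{Bt}\vec{W}(0)+\int_0^t e^{B(t-s)}\vec{F}(s)\,\mathrm{d}s.\]
This step is purely formal.

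\emph{Diagonalisation.} Proposition \ref{prop:blockdiaggeneral} supplies $2(n-2)$ eigenpairs $\lambda_k^\pm$, $v_k^\pm=(u_k,\lambda_k^\pm u_k)^\top$. Here I would take $\lambda_k^\pm$ with the sign convention $-\omega_k$ that the proof of that proposition actually derives. Let $P$ be the matrix whose columns are these $2(n-2)$ eigenvectors (the statement says ``matrix of eigenvalues'', but eigenvectors are meant). Once $P$ is shown to be invertible, $B=P\,\mathrm{diag}(\lambda_j)P^{-1}$, hence $e^{Bt}=P\,\mathrm{diag}(e^{\lambda_jt})P^{-1}$. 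Substituting this into the variation of parameters formula and pulling $P$ and $P^{-1}$ outside the integral (they are constant) gives the stated expression.

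\emph{Main obstacle: invertibility of $P$.} I would reduce this to a block computation. Let $Q=[u_1,\dots,u_{n-2}]$, which is invertible because the $u_k$ form a basis, and let $\Lambda_\pm=\mathrm{diag}(\lambda_k^\pm)$. Then
\[P=\begin{bmatrix}Q&Q\\Q\Lambda_+&Q\Lambda_-\end{bmatrix}=\begin{bmatrix}Q&0\\0&Q\end{bmatrix}\begin{bmatrix}I&I\\\Lambda_+&\Lambda_-\end{bmatrix}.\]
The second factor is invertible exactly when $\Lambda_--\Lambda_+$ is, since its determinant equals $\det(\Lambda_--\Lambda_+)$. So $P$ is invertible if and only if $\lambda_k^+\neq\lambda_k^-$ for every $k$, that is, $\omega_k^2+4\mu_k\neq0$. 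I would state this non-degeneracy as an implicit hypothesis; it holds generically.

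In the degenerate case $\omega_k^2+4\mu_k=0$, $B$ restricted to the span of $(u_k,0)$ and $(0,u_k)$ is a $2\times2$ Jordan block. The formula then survives if $\mathrm{diag}(e^{\lambda_jt})$ is replaced by the corresponding Jordan exponential, which contributes terms $te^{\lambda_kt}$. I would remark on this rather than treat it fully. With invertibility settled, uniqueness follows from the uniqueness part of the linear ODE theory, and the proof is complete.
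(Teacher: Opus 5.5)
Your proposal is correct and follows the paper's proof, which simply reruns the first-order reduction and variation-of-parameters argument of Theorem~\ref{thm:generalinitialdata}, with Proposition~\ref{prop:blockdiaggeneral} supplying the eigendecomposition. Your block factorisation of $P$ adds something the paper omits: the paper tacitly assumes $P$ is invertible, whereas you correctly show this holds exactly when $\omega_k^2+4\mu_k\neq0$ for every $k$, and that otherwise the formula must be modified to use Jordan-block exponentials.
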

\begin{proof}
    The reduction of order and variation of parameters argument of Theorem \ref{thm:generalinitialdata} apply verbatim, with Proposition \ref{prop:blockdiaggeneral} providing the eigendecomposition rather than Proposition \ref{prop:esystem}.
\end{proof}

\paragraph{A chain with ground damping}
Consider an $n$-degrees-of-freedom damped mass-spring system as in Figure \ref{fig:app_ndof}, where each mass $m_i$ has scalar displacement $x_i\in\mathbb{R}$ from its equilibrium position, is connected to ground by a spring $\kappa_i$ and a viscous damper $\tau_i$, and its coupled to its neighbor by a spring of stiffness $k_i$ between mass $i$ and $i+1$. The two boundary masses follow prescribed trajectories $x_1=f_1(t)$ and $x_n=f_n(t)$.

\begin{figure}[htb]
	\centering
        \includegraphics[width=0.6\textwidth]{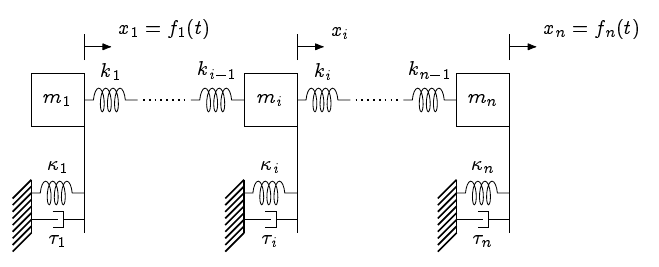}
	\caption{$n$-degrees-of-freedom damped mass-spring system.}
	\label{fig:app_ndof}
\end{figure}

The force balance from Newton's second law on the $i$-th interior mass gives
\begin{multline*}
    m_i\ddot{x}_i = k_{i-1}(x_{i-1}-x_i)+k_i(x_{i+1}-x_i)-\kappa_ix_i-\tau_i\dot{x}_i \\ \iff m_i\ddot{x}_i+\tau\dot{x}_i=k_{i-1}x_{i-1}-(k_{i-1}+k_i+\kappa_i)x_i+k_ix_{i+1},
\end{multline*}
a near equivalent of \eqref{eq:sdhcs} in which the coefficient $-(k_{i-1}+k_i+\kappa_i)$ replaces the $-2$ of the discrete Laplacian. In matrix form, with $\vec{u}=(x_2,x_3,\dots,x_{n-1})^\top$, the diagonal mass and damping matrix $M=\mathrm{diag}(m_2,m_3,\dots,m_{n-1})$ and $C=\mathrm{diag}(\tau_2,\tau_3,\dots,\tau_{n-1})$ and the symmetric tridiagonal stiffness matrix $K$ with entries
\[K_{j,j}=-(k_j+k_{j+1}+\kappa_{j+1})\text{ and }K_{j,j+1}=K_{j+1,j}=k_{j+1}\]
the system becomes
\[M\ddot{\vec{u}}+C\dot{\vec{u}}=K\vec{u}+\vec{f}(t)\text{ where }\vec{f}(t)=k_1f_1(t)e_1+k_{n-1}f_n(t)e_n.\]
Setting $m_i=m,\tau_i=\tau,k_i=k,\kappa_i=\kappa$ and dividing by $m$ puts this into the form of \eqref{eq:matrixvariantflow} with $T=(kA-\kappa I)/m$ and $D=(\tau/m)I$. Note that \eqref{eq:sdhcs} can be recovered by $k=m=1,\tau=\beta$ and $\kappa=0$.
\paragraph{A chain with inter-mass damping}
Now we add viscous dampers $d_i$ in parallel with each inter-mass coupling spring as shown in Figure \ref{fig:app_ndof_comp}.
\begin{figure}[htb]
	\centering
        \includegraphics[width=0.6\textwidth]{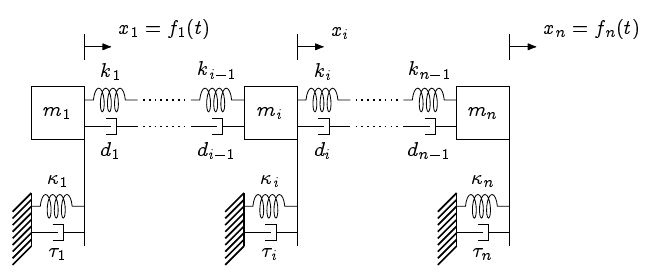}
	\caption{$n$-degrees-of-freedom damped mass-spring system.}
	\label{fig:app_ndof_comp}
\end{figure}

The force balance on the $i$-th interior mass now picks up the additional inter-mass damping forces $d_{i-1}(\dot{x}_{i-1}-\dot{x}_i)$ and $d_i(\dot{x}_{i+1}-\dot{x}_i)$, so that
\[m_i\ddot{x}_i-d_{i-1}\dot{x}_{i-1}+(\tau_i+d_{i-1}+d_i)\dot{x}_i-d_i\dot{x}_{i+1}=k_{i-1}x_{i-1}-(k_{i-1}+k_i+\kappa_i)x_i+k_ix_{i+1},\]
so the velocity coefficients acquire the same symmetric tridiagonal damping matrix $C$ with entries 
\[C_{j,j}=\tau_{j+1}+d_j+d_{j+1}\text{ and }C_{j,j+1}=C_{j+1,j}=-d_{j+1},\]
the system becomes
\[M\ddot{\vec{u}}+C\dot{\vec{u}}=K\vec{u}+\vec{f}(t)+\vec{g}(t)\text{ where }\vec{g}(t)=d_1\dot{f}_1(t)e_1+d_{n-1}\dot{f}_n(t)e_n.\]
Setting $m_i=m,\tau_i=\tau,k_i=k,\kappa_i=\kappa,d_i=d$ and dividing by $m$ gives \eqref{eq:matrixvariantflow} where $T=(kA-\kappa I)/m$ and $D=(\tau I-d A)/m$. Note the damping matrix is no longer diagonal, however both matrices are polynomials in $A$ and hence share its eigenbasis.

\begin{rem}[Forces applied at arbitrary nodes]
    This framework does not restrict forcing to the end point nodes, the forcing can apply to any nodes. Concretely, if the masses $i\in S\subseteq\{1,2,\dots,n\}$ are driven by external forces $\varphi_i(t)$ the resulting system is
    \[M\ddot{\vec{u}}+C\dot{\vec{u}}=K\vec{u}+\sum_{i\in S}\varphi_i(t)e_i\]
\end{rem}
\subsection*{Acknowledgments}
Part of this research was completed while the first author was visiting the University of Science and Technology, China under a Chinese Academy of Sciences Presidents' International Fellowship Initiative visiting fellowship, grant number 2024PVA0042.  The authors would like to thank Stephan Tornier and Jahne Meyer for their interest in this work.

\bibliography{sn-bibliography}


\end{document}